\newtheorem{theorem}{Theorem}[section]
\newtheorem{lemma}[theorem]{Lemma}
\newtheorem{proposition}[theorem]{Proposition}
\numberwithin{equation}{section}
\newcommand{\fconvs}{{\mbox{\rm Conv}_{{\rm sc}}(\R^n)}} %convex and supercoercive functions on R^n
\newcommand{\fconvsk}{{\mbox{\rm Conv}_{{\rm sc}}(\R^k)}} %convex and supercoercive functions on R^k
\newcommand{\fconvso}{{\mbox{\rm Conv}_{{\rm sc},0}(\R^n)}} %convex and supercoercive functions on R^n, normed position
\newcommand{\fconvx}{{\mbox{\rm Conv}(\R^n)}} %convex functions on R^n
\newcommand{\fconvf}{{\mbox{\rm Conv}(\R^n; \R)}} % finite convex functions on R^n
\newcommand{\fconvsE}{{\mbox{\rm Conv}_{{\rm sc}}(E)}} %convex and supercoercive functions on an affine subspace E of R^n
\newcommand{\fconvsbE}{{\mbox{\rm Conv}_{{\rm sc}}(\bar{E})}} %convex and supercoercive functions on an affine subspace \bar{E} of R^n
\newcommand{\infconv}{\mathbin{\Box}} %infimal convolution
\newcommand{\sq}{\mathbin{\vcenter{\hbox{\rule{.3ex}{.3ex}}}}} %epi-multiplication
\newcommand{\proj}{\operatorname{proj}} %projection
\newcommand{\bd}{\operatorname{bd}} %boundary
\newcommand{\Grass}[2]{\operatorname{G}(#2,#1)} %linear Grassmannian
\newcommand{\cK}{{\mathcal K}}
\newcommand{\cR}{\operatorname{\mathcal R}}
\newcommand{\sn}{{\mathbb{S}^{n-1}}} %unit sphere
\newcommand{\Bn}{B^n} %unit ball
\renewcommand{\O}{\operatorname{O}}
\newcommand{\SO}{\operatorname{SO}}
\newcommand{\Hess}{{\operatorname{D}}^2}
\DeclareMathOperator{\oZ}{\operatorname{Z}}
\newcommand{\oZZ}[2]{\operatorname{V}_{#1,#2}} %functional intrinsic volume
\newcommand{\R}{{\mathbb R}}
\newcommand{\N}{{\mathbb N}}
\newcommand{\dom}{\operatorname{dom}}
\newcommand{\hm}{\mathcal H}
\newcommand{\epi}{\operatorname{epi}}
\newcommand{\ind}{{\rm\bf I}}
\newcommand{\Had}[2]{D_{#1}^{#2}} %Hadwiger class
\renewcommand{\d}{\,\mathrm{d}}
\begin{document}

\title[The Hadwiger Theorem on Convex Functions, II]{The Hadwiger Theorem on Convex Functions, II:\\
Cauchy--Kubota Formulas}

\author{Andrea Colesanti}
\address{Dipartimento di Matematica e Informatica ``U. Dini''
Universit\`a degli Studi di Firenze,
Viale Morgagni 67/A - 50134, Firenze, Italy}
\email{andrea.colesanti@unifi.it}

\author{Monika Ludwig}
\address{Institut f\"ur Diskrete Mathematik und Geometrie,
Technische Universit\"at Wien,
Wiedner Hauptstra\ss e 8-10/1046,
1040 Wien, Austria}
\email{monika.ludwig@tuwien.ac.at}

\author{Fabian Mussnig}
\address{Dipartimento di Matematica e Informatica ``U. Dini''
Universit\`a degli Studi di Firenze,
Viale Morgagni 67/A - 50134, Firenze, Italy}
\email{mussnig@gmail.com}

\date{}

\begin{abstract} A new version of the Hadwiger theorem on convex functions is established and an explicit representation of functional intrinsic volumes is found using new functional Cauchy--Kubota formulas.  In addition, connections between functional intrinsic volumes and their classical counterparts are obtained and non-negative valuations are classified. 

\bigskip
{\noindent 2020 AMS subject classification: 52B45 (26B25, 49Q20, 52A41, 52A39)}
\end{abstract}

\maketitle

\section{Introduction and Statement of Results}
Valuations play a central role in convex and integral geometry ever since they were the key ingredient in  Dehn's solution of Hilbert's Third Problem in 1901 (see \cite{Hadwiger:V,Klain:Rota}).  
In the classical setting, valuations  are defined on the set of convex bodies, $\cK^n$, that is, on non-empty, compact, convex subsets of $\R^n$, and a functional $\oZ:\cK^n\to\R$ is called a \emph{valuation} if
$$\oZ(K)+\oZ(L)=\oZ(K\cup L) + \oZ(K\cap L)$$
for every $K,L\in \cK^n$ such that $K\cup L\in\cK^n$. Among the most important valuations are the intrinsic volumes, $V_j$, for $0\leq j \leq n$. 
Here, $V_n$ is the $n$-dimensional volume (or Lebesgue measure) and $V_0$ is the Euler characteristic (that is, $V_0(K):=1$ for every $K\in\cK^n$). 
If $K\in\cK^n$ is $j$-dimensional for $1\leq j \leq n-1$, then $V_j(K)$ is just the $j$-dimensional volume of $K$. For general $K\in\cK^n$, the $j$th intrinsic volume of $K$ can be defined using the Cauchy--Kubota formulas
\begin{equation}\label{cauchy_kubota}
V_j(K):=\frac{\kappa_n}{\kappa_j \kappa_{n-j}} \binom{n}{j} \int_{\Grass{j}{n}} V_j(\proj_E K)\d E.
\end{equation}
Here, $\kappa_j$ denotes the $j$-dimensional volume of the $j$-dimensional unit ball, integration is with respect to the Haar probability measure on $\Grass{j}{n}$, the Grassmannian of $j$-dimensional subspaces in $\R^n$, and $\proj_E:\R^n\to E$ denotes the orthogonal projection onto $E\in \Grass{j}{n}$ (cf. \cite{Schneider:CB2}).

While \eqref{cauchy_kubota} can be proved directly, it is also an immediate consequence of the celebrated Hadwiger 
theorem, which classifies continuous, translation and rotation invariant valuations and thereby characterizes linear combinations of intrinsic volumes. Here, continuity is understood with respect to the Hausdorff metric, and a valuation $\oZ:\cK^n\to\R$ is \emph{translation invariant} if $\oZ(\tau K)=\oZ(K)$ for every $K\in\cK^n$ and translation $\tau$ on $\R^n$, while it is \emph{rotation invariant} if $\oZ(\vartheta K)=\oZ(K)$ for every $K\in\cK^n$ and $\vartheta\in \SO(n)$.

\begin{theorem}[Hadwiger \cite{Hadwiger:V}]\label{hugo}
A functional $\oZ:\cK^n\to \R$ is  a continuous, translation and rotation invariant valuation if and only if  there exist constants $\zeta_0, \ldots, \zeta_n\in\R$ such that 
$$\oZ(K) = \sum_{j=0}^n \zeta_j \,V_j(K)$$
for every $K\in\cK^n$. 
\end{theorem}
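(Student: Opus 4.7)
The easy direction is that any linear combination $\sum_{j=0}^n \zeta_j V_j$ is a continuous, translation and rotation invariant valuation, which follows immediately from the corresponding properties of each $V_j$ (continuity from Steiner's formula, the valuation property from inclusion--exclusion for volume, and invariance from the intrinsic definition of $V_j$ via \eqref{cauchy_kubota} or via the Steiner polynomial). So the content is in the converse direction, and my plan is to use induction on the dimension $n$, following the now-standard Hadwiger--Klain scheme.

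The base case $n=0$ is trivial, and $n=1$ reduces to showing that a continuous, translation invariant valuation on compact intervals is an affine combination of $V_0=1$ and $V_1=$ length, which follows from Cauchy's functional equation together with continuity. For the inductive step, assume the theorem in dimension $n-1$ and let $\oZ:\cK^n\to\R$ be continuous, translation and rotation invariant. Fix a hyperplane $H\subset\R^n$ and consider the restriction $\oZ|_{\cK(H)}$. By rotation and translation invariance this is a continuous, translation and rotation invariant valuation on $\cK(H)\cong\cK^{n-1}$, so by induction it equals $\sum_{j=0}^{n-1}\zeta_j V_j$ on $\cK(H)$ for suitable constants $\zeta_j$. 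Since the intrinsic volumes $V_j$ of an $(n-1)$-dimensional body agree in the ambient $\R^n$ and in $H$, the functional $\oZ':=\oZ-\sum_{j=0}^{n-1}\zeta_j V_j$ is a continuous, translation and rotation invariant valuation on $\cK^n$ that vanishes on every convex body contained in a hyperplane; that is, $\oZ'$ is \emph{simple}.

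The crux is therefore to prove that a continuous, translation and rotation invariant \emph{simple} valuation on $\cK^n$ is a constant multiple of $V_n$. This is the main obstacle and where I would invoke the Klain--Schneider theorem on simple valuations: any continuous, translation invariant simple valuation on $\cK^n$ is determined by an odd function on $\sn$ together with a multiple of volume, and the even part is automatically a multiple of $V_n$. Rotation invariance forces the odd function (in the Schneider decomposition) to be rotation invariant on $\sn$, hence constant, and an odd constant must vanish. Thus $\oZ'=\zeta_n V_n$ for some $\zeta_n\in\R$. An alternative route, closer to Hadwiger's original, proves the simple-valuation statement by a further induction: the Klain function $K\mapsto \oZ'(K)/V_{n-1}(K)$ on $(n-1)$-dimensional bodies in a hyperplane $H$ is, by rotation invariance, a constant $c_H$ independent of $H$, and one then shows by a direct polytopal approximation and dissection argument that $\oZ'(P)=c\,V_n(P)$ for every polytope $P$, extending to $\cK^n$ by continuity.

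Combining the two steps, $\oZ=\sum_{j=0}^{n-1}\zeta_j V_j + \zeta_n V_n$ on $\cK^n$, completing the induction. The principal difficulty is thus isolated in the simple-valuation classification; everything else is bookkeeping around restriction to hyperplanes and the inductive hypothesis. I would expect the write-up to cite Klain's or Schneider's theorem as a black box rather than reprove it, since that argument (via support functions and surface area measures, or via a dimension-reducing Klain functional) is substantial in its own right and orthogonal to the Cauchy--Kubota theme of the present paper.
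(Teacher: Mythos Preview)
The paper does not prove this theorem. Theorem~\ref{hugo} is stated with attribution to Hadwiger~\cite{Hadwiger:V} and is used purely as background and motivation; no proof, sketch, or reference to a proof strategy appears anywhere in the paper beyond the citation. So there is nothing to compare your proposal against.

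Your outline is a reasonable rendition of the modern Klain--Hadwiger argument and would be acceptable as a proof sketch in a survey. One small muddle: in your ``alternative route'' paragraph you write the Klain function as $\oZ'(K)/V_{n-1}(K)$ for $(n-1)$-dimensional $K$, but you have just arranged that $\oZ'$ vanishes on all bodies in hyperplanes, so this ratio is identically zero and carries no information. The Klain-function technique is normally applied to a $j$-homogeneous component restricted to $j$-dimensional subspaces \emph{before} subtracting off the intrinsic volumes; what remains after your subtraction is genuinely the simple-valuation problem, and your first route via the Klain--Schneider characterization (odd density plus a volume multiple, with rotation invariance killing the odd part) is the correct way to finish. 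Since the paper treats Hadwiger's theorem as a black box, your final remark---that one would cite the simple-valuation classification rather than reprove it---is exactly in the spirit of how the paper handles this result.
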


\noindent
The Hadwiger theorem leads to effortless proofs of numerous further results in integral geometry and geometric probability (see \cite{Hadwiger:V,Klain:Rota}).

\goodbreak
We restate the Hadwiger theorem here in a form that makes use of the Cauchy--Kubota formulas (\ref{cauchy_kubota}). 

\begin{theorem}[Hadwiger]
\label{thm2:hadwiger}
A functional $\,\oZ:\cK^n \to \R$ is a continuous, translation and rotation invariant valuation if and only if there exist constants $\alpha_0, \dots, \alpha_n\in\R$  such that
\begin{equation*}
\oZ(K)=\sum_{j=0}^n \alpha_j \int_{\Grass{j}{n}} V_j(\proj_E K)\d E
\end{equation*}
for every $K\in\cK^n$.
\end{theorem}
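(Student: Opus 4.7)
The plan is to derive Theorem \ref{thm2:hadwiger} directly from the classical Hadwiger theorem (Theorem \ref{hugo}) by using the Cauchy--Kubota formulas \eqref{cauchy_kubota} as a change of basis for the space of continuous, translation and rotation invariant valuations.

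For the ``if'' direction, I would first observe that for each fixed $E\in\Grass{j}{n}$, the map $K\mapsto V_j(\proj_E K)$ is a continuous, translation invariant valuation on $\cK^n$ (since the orthogonal projection $\proj_E$ is continuous, translation equivariant, and maps intersecting unions to intersecting unions in $E$, and $V_j$ on $\cK^j\cong\cK(E)$ is a continuous translation invariant valuation). Averaging over the Grassmannian with respect to the Haar probability measure, the rotation invariance of the measure yields that $K\mapsto\int_{\Grass{j}{n}} V_j(\proj_E K)\d E$ is in addition rotation invariant. The valuation and continuity properties are preserved because the integrand is bounded uniformly on Hausdorff-bounded families of bodies (since $V_j$ is continuous and $\Grass{j}{n}$ is compact), allowing one to interchange limits and the finite-additivity identity with the integral. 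Any linear combination of such functionals is therefore again a continuous, translation and rotation invariant valuation.

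For the ``only if'' direction, I would apply the classical Hadwiger theorem (Theorem \ref{hugo}) to produce constants $\zeta_0,\dots,\zeta_n\in\R$ with $\oZ(K)=\sum_{j=0}^n \zeta_j V_j(K)$ for every $K\in\cK^n$. Using \eqref{cauchy_kubota} to rewrite each intrinsic volume, one obtains
$$\oZ(K)=\sum_{j=0}^n \zeta_j \cdot \frac{\kappa_j\kappa_{n-j}}{\kappa_n \binom{n}{j}}\int_{\Grass{j}{n}} V_j(\proj_E K)\d E,$$
so the result follows with $\alpha_j := \zeta_j \,\kappa_j\kappa_{n-j}/(\kappa_n\binom{n}{j})$. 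Here I would briefly verify the boundary cases $j=0$ and $j=n$: for $j=0$ the Grassmannian is a single point and $V_0\equiv 1$, while for $j=n$ the Grassmannian is also a single point with $\proj_{\R^n}K=K$; in both cases the Cauchy--Kubota prefactor equals $1$ (since $\kappa_0=1$), so the formula reduces consistently to $V_0(K)$ and $V_n(K)$ respectively.

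I do not expect any genuine obstacle here; the only point requiring a small amount of care is the justification in the ``if'' direction that continuity and the valuation property survive integration over $\Grass{j}{n}$, which follows from dominated convergence together with the compactness of the Grassmannian and the continuity of $V_j$ on $\cK(E)$. The substance of the theorem is entirely contained in Theorem \ref{hugo} together with \eqref{cauchy_kubota}, and the new statement is just a repackaging that will serve as the template for the functional version to be developed later in the paper.
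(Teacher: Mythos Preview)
Your approach is correct and matches the paper's implicit reasoning: the paper does not give a separate proof of this statement but presents it as an immediate reformulation of Theorem~\ref{hugo} via the Cauchy--Kubota formulas \eqref{cauchy_kubota}, which is exactly what you do.

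One arithmetic slip: when you substitute \eqref{cauchy_kubota} into $\oZ(K)=\sum_j \zeta_j V_j(K)$, the constant should be $\alpha_j=\zeta_j\,\dfrac{\kappa_n}{\kappa_j\kappa_{n-j}}\binom{n}{j}$, not its reciprocal. (Equation \eqref{cauchy_kubota} expresses $V_j(K)$ as that constant \emph{times} the Grassmannian integral, so the constant carries over directly rather than being inverted.) This does not affect the validity of the argument, since the theorem only asserts the existence of suitable $\alpha_j$.
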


\noindent
The Hadwiger theorem is the first culmination of the program, initiated by Blaschke, of classifying valuations invariant under various groups  and the starting point of  geometric  valuation theory (see \cite[Chapter~6]{Schneider:CB2}). We refer to \cite{Alesker99,Alesker01,AleskerFaifman, BernigFaifman2017,Bernig:Fu, Haberl_sln, LiMa,  Ludwig:Reitzner,Ludwig:Reitzner2,Ludwig:convex,Haberl:Parapatits_centro} for some recent classification results and to \cite{HLYZ_acta, BLYZ_cpam, Ludwig:matrix} for some of  the new valuations that keep arising.

Currently, a geometric theory of valuations on function spaces is being developed.
On a space $X$ of (extended) real-valued functions, a functional $\oZ:X\to\R$ is called a \emph{valuation} if
$$\oZ(u)+\oZ(v)=\oZ(u\vee v) + \oZ(u \wedge v)$$
for every $u,v\in X$ such that also their pointwise maximum $u\vee v$ and their pointwise minimum $u\wedge v$ belong to $X$. 
The first classification results of valuations on classical function spaces were obtained for $L_p$ and Sobolev spaces and for Lipschitz and continuous functions (see  \cite{Tsang:Lp, Ludwig:SobVal,Ludwig:Fisher,ColesantiPagniniTradaceteVillanueva,ColesantiPagniniTradaceteVillanueva2021, Villanueva2016}). 

Of special interest are valuations on convex functions, where the first classification results were obtained in \cite{ColesantiLudwigMussnig17,ColesantiLudwigMussnig,Mussnig19, Mussnig21} and the first structural results in  \cite{Alesker_cf,Colesanti-Ludwig-Mussnig-4, Knoerr1, Knoerr2}.
Recently, the authors \cite{Colesanti-Ludwig-Mussnig-5} established the Hadwiger theorem on convex functions. 
Let
$$\fconvs:=\Big\{u:\R^n\to(-\infty,+\infty]\colon u \not\equiv +\infty, \lim_{|x|\to+\infty} \frac{u(x)}{|x|}=+\infty, u \text{ is l.s.c. and convex}\Big\}$$
denote the space of proper,  super-coercive, lower semicontinuous, convex functions on $\R^n$, where $\vert\cdot\vert$ is the Euclidean norm.
It is equipped with the topology induced by epi-convergence (see Section \ref{convex functions} for the definition). A functional $\oZ:\fconvs\to\R$ is \emph{epi-translation invariant} if $\oZ(u\circ \tau^{-1}+\alpha)=\oZ(u)$ for every $u\in\fconvs$, every translation $\tau$ on $\R^n$ and every $\alpha\in\R$. It is \emph{rotation invariant} if $\oZ(u\circ \vartheta^{-1})=\oZ(u)$ for every $u\in\fconvs$ and $\vartheta\in\SO(n)$.

The authors \cite{Colesanti-Ludwig-Mussnig-5} introduced functional versions of  intrinsic volumes on $\fconvs$ in the following way.
For $0\leq j \leq n-1$, let
$$
\Had{j}{n}:=\Big\{\zeta\in C_b((0,\infty))\colon  \lim_{s\to 0^+} s^{n-j} \zeta(s)=0,  \lim_{s\to 0^+} \int_s^{\infty}  t^{n-j-1}\zeta(t) \d t \text{ exists and is finite}\Big\},
$$
where  $C_b((0,\infty))$ is the set of continuous functions with bounded support on $(0,\infty)$. In addition, let $\zeta\in\Had{n}{n}\,$ if $\zeta\in C_b((0,\infty))$ and $\lim_{s\to 0^+} \zeta(s)$ exists and is finite. In this case, we set $\zeta(0):=\lim_{s\to 0^+} \zeta(s)$ and consider $\zeta$ also as an element of $C_c([0,\infty))$, the set of continuous functions with compact support on $[0,\infty)$.

\begin{theorem}[\!\cite{Colesanti-Ludwig-Mussnig-5}, Theorem 1.2]
\label{thm:existence_singular_hessian_vals}
For $0\leq j \leq n$ and $\zeta\in\Had{j}{n}$, there exists a unique, continuous, epi-translation and rotation invariant valuation $\oZ:\fconvs\to\R$ such that
\begin{equation}
\label{eq:rep_ozz_c2}
\oZ(u)=\int_{\R^n} \zeta(|\nabla u(x)|)\big[\Hess u(x)\big]_{n-j} \d x
\end{equation}
for every $u\in\fconvs\cap C_+^2(\R^n)$.
\end{theorem}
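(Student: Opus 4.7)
The plan is to prove uniqueness by density and existence by first defining $\oZ$ on smooth functions and extending by continuity via Hessian measures.

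For uniqueness, every $u \in \fconvs$ admits an epi-convergent approximation by functions in $\fconvs \cap C_+^2(\R^n)$ --- for instance, via Moreau--Yosida regularization combined with the addition of a small term $\varepsilon|x|^2$ to preserve super-coercivity. Hence any continuous functional on $\fconvs$ is determined by its values on $\fconvs \cap C_+^2(\R^n)$.

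For existence, define $\oZ(u)$ by (\ref{eq:rep_ozz_c2}) on $\fconvs \cap C_+^2(\R^n)$; it is then natural to recast it as $\oZ(u) = \int_{\R^n} \zeta(|\nabla u|) \d\Phi_{n-j}(u; \cdot)$, where $\Phi_{n-j}(u; \cdot)$ denotes the $(n-j)$th Hessian (Monge--Amp\`ere type) measure of $u$, a non-negative Borel measure defined for every convex function and agreeing with $[\Hess u]_{n-j} \d x$ in the smooth case. Finiteness of the integral rests on super-coercivity: for $j < n$ the set $\{x \colon |\nabla u(x)| \in \operatorname{supp} \zeta\}$ is bounded, while the conditions $\lim_{s \to 0^+} s^{n-j}\zeta(s) = 0$ and convergence of $\int_0^\infty t^{n-j-1}\zeta(t) \d t$ combined with a coarea-type bound $\int_{\{|\nabla u| \le s\}} [\Hess u]_{n-j} \d x = O(s^{n-j})$ control the behaviour near critical points (for $j=n$ the change of variable $y = \nabla u(x)$ reduces everything to $\int \zeta(|y|)\d y$). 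Epi-translation and rotation invariance are immediate from the change-of-variables formula. The valuation identity on pairs $u, v$ with $u \wedge v \in \fconvs$ reduces to the corresponding additivity of Hessian measures: split $\R^n = \{u \leq v\} \cup \{u \geq v\}$; on each piece $u \wedge v$ and $u \vee v$ coincide with $u$ or $v$, and matching the relevant Hessian measures piecewise yields the identity.

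The hardest step is the continuous extension of $\oZ$ from $\fconvs \cap C_+^2(\R^n)$ to all of $\fconvs$. This relies on weak continuity of the map $u \mapsto \Phi_{n-j}(u; \cdot)$ along epi-convergent sequences, combined with uniform tail control ensuring that, for $u_k \to u$ in $\fconvs$, the supports of $\zeta(|\nabla u_k|)$ all lie inside a fixed compact set. The latter is the delicate point: epi-convergence controls $u$ only loosely at infinity, so a quantitative form of super-coercivity must be propagated along the approximating sequence to exclude escape of mass. Once this uniform tail estimate is in place, continuity of $\oZ$ on $\fconvs$ follows, and the valuation property together with the invariances pass to the limit, completing the proof.
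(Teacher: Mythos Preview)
Your proposal has a genuine gap precisely at the point you flag as ``the hardest step.'' The difficulty is that for $1\le j\le n-1$ a function $\zeta\in\Had{j}{n}$ may blow up at the origin (only $s^{n-j}\zeta(s)\to 0$ is required). Consequently the integrand $\zeta(|y|)$ is \emph{not} a bounded continuous function, and weak convergence of the Hessian measures $\Psi_j^n(u_k,\cdot)$ along an epi-convergent sequence $u_k\to u$ does not by itself yield convergence of $\int \zeta(|y|)\,d\Psi_j^n(u_k,y)$. Your ``coarea-type bound'' $\Psi_j^n(u,B_s)=O(s^{n-j})$ would have to hold \emph{uniformly} along every epi-convergent sequence, and you give no argument for this; establishing it is essentially as hard as the theorem itself. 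The paper in fact remarks explicitly that the Hessian-measure route you sketch was carried out in earlier work only for $\zeta\in C_c([0,\infty))$, and that genuinely new machinery (``singular Hessian valuations'') was needed in \cite{Colesanti-Ludwig-Mussnig-5} for the singular case. (A minor slip: for $j=n$ one has $[\Hess u]_{0}=1$, so the integral is $\int\zeta(|\nabla u|)\,dx$, not $\int\zeta(|y|)\,dy$; the latter is the $j=0$ case.)

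The paper's proof takes a completely different route that avoids these difficulties. The key idea is the integral transform $\cR$: one shows (Lemma~\ref{le:r_kln}) that $\alpha:=\kappa_{n-j}\,\cR^{n-j}\zeta$ extends to a function in $C_c([0,\infty))$, with no singularity at all. One then \emph{defines} $\oZ$ on all of $\fconvs$ by the Cauchy--Kubota-type formula
\[
\oZ(u):=\frac{\kappa_n}{\kappa_j\kappa_{n-j}}\binom{n}{j}\int_{\Grass{j}{n}}\int_{\dom(\proj_E u)}\alpha(|\nabla\proj_E u(x_E)|)\,dx_E\,dE,
\]
which is manifestly a continuous, epi-translation and $\O(n)$ invariant valuation by Lemma~\ref{le:cauchy_kubota_is_continuous_val} (this uses only the $j$-homogeneous case of Proposition~\ref{prop:n-hom}, where $\alpha$ is bounded and compactly supported, so no singular integrals arise). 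The substantive work is then Proposition~\ref{prop:ck}, which shows---via Reilly's lemma, the coarea formula, and the integral-geometric identity \eqref{Schneider1} for curvature measures---that on $\fconvs\cap C_+^2(\R^n)$ this $\oZ$ coincides with the Hessian integral \eqref{eq:rep_ozz_c2}. Uniqueness then follows from density, as you say. In short: rather than extending a singular integral by continuity, the paper replaces the singular density $\zeta$ by the regular density $\alpha$ through a change of representation, and continuity becomes automatic.
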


\noindent
Here $C_+^2(\R^n)$ is the set of finite-valued functions  $u\in C^2(\R^n)$ with positive definite Hessian matrix $\Hess u$ and we write $[A]_i$ for the $i$th elementary symmetric function of the eigenvalues of any symmetric matrix $A$ (with the convention that $[A]_0:=1$).
\goodbreak

Theorem \ref{thm:existence_singular_hessian_vals} allows us to make the following definition. For $0\leq j \leq n$ and $\zeta\in\Had{j}{n}$, the \emph{functional intrinsic volume} $\,\oZZ{j}{\zeta}^n: \fconvs\to\R$ is the unique continuous extension of the functional defined in \eqref{eq:rep_ozz_c2} on $\fconvs\cap C_+^2(\R^n)$. Note that the functional intrinsic volumes are not only rotation invariant but even $\O(n)$ invariant. Moreover,  for $\zeta\in\Had{0}{n}$, the functional $\oZZ{0}{\zeta}^n$ is a constant, independent of $u\in\fconvs$, and by
\cite[Proposition 20]{Colesanti-Ludwig-Mussnig-4},
\begin{equation}
\label{n-hom}
\oZZ{n}{\zeta}^n(u)=\int_{\dom u} \zeta(|\nabla u(x)|)\d x
\end{equation}
for every $u\in\fconvs$ and $\zeta\in\Had{n}{n}$, where $\dom u:=\{x\in\R^n: u(x)<\infty\}$ is the \emph{domain} of $u$. We remark that for $\zeta\in C_c([0,\infty))$, 
extensions of \eqref{eq:rep_ozz_c2} to $\fconvs$ were previously defined by the authors in \cite{Colesanti-Ludwig-Mussnig-4} using Hessian measures and so-called Hessian valuations. For the proof of Theorem~\ref{thm:existence_singular_hessian_vals} in \cite{Colesanti-Ludwig-Mussnig-5}, singular Hessian valuations were introduced.

The Hadwiger theorem for convex functions is the following result.
Let $n\geq 2$.
\begin{theorem}[\!\!\cite{Colesanti-Ludwig-Mussnig-5}, Theorem 1.3]
\label{thm:hadwiger_convex_functions}
A functional $\oZ:\fconvs \to \R$ is a continuous, epi-translation and rotation invariant valuation if and only if there exist functions $\zeta_0\in\Had{0}{n}$, \dots, $\zeta_n\in\Had{n}{n}$  such that
\begin{equation*}
\oZ(u)= \sum_{j=0}^n \oZZ{j}{\zeta_j}^n(u) 
\end{equation*}
for every $u\in\fconvs$.
\end{theorem}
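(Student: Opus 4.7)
The ``if'' direction is immediate from Theorem~\ref{thm:existence_singular_hessian_vals}: each $\oZZ{j}{\zeta_j}^n$ is continuous, epi-translation and rotation invariant, and these properties are preserved under finite sums. For the converse, my plan is to invoke the homogeneous decomposition for continuous, epi-translation invariant valuations on $\fconvs$ (due to Kn\"orr, cf.\ \cite{Knoerr1, Knoerr2}), which writes
\[
\oZ = \oZ_0 + \oZ_1 + \dots + \oZ_n,
\]
where each $\oZ_j$ is continuous, epi-translation invariant, and $j$-homogeneous with respect to epi-multiplication $(u\sq\lambda)(x):=\lambda\, u(x/\lambda)$. Uniqueness of the decomposition, combined with rotation invariance of $\oZ$, transfers rotation invariance to every component $\oZ_j$. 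The task then reduces to identifying each $\oZ_j$ with some $\oZZ{j}{\zeta_j}^n$ for a suitable $\zeta_j\in\Had{j}{n}$.

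The extremal degrees admit direct arguments. For $j=0$, epi-translation invariance together with $0$-homogeneity forces $\oZ_0$ to be a constant, which agrees with $\oZZ{0}{\zeta_0}^n$ for any $\zeta_0\in\Had{0}{n}$ realizing that constant. For $j=n$, I would use the known classification of top-degree continuous, epi-translation and rotation invariant valuations (see \cite{Colesanti-Ludwig-Mussnig-4}) together with the explicit formula \eqref{n-hom} to conclude that $\oZ_n=\oZZ{n}{\zeta_n}^n$ for a unique $\zeta_n\in\Had{n}{n}$.

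The heart of the argument is the range $1\le j\le n-1$, which I would treat by induction on $n$ via a functional Klain-type reduction. For a $k$-dimensional subspace $E\subset\R^n$ with $1\le k<n$, embed $\fconvsE$ into $\fconvs$ by sending $v\in\fconvsE$ to the extension $\bar v$ equal to $v$ on $E$ and $+\infty$ off $E$. The assignment $v\mapsto\oZ_j(\bar v)$ is a continuous, epi-translation invariant valuation on $\fconvsE$, and by rotation invariance of $\oZ_j$ it depends only on $\dim E$. The inductive hypothesis then expresses this restriction as a sum of lower-dimensional functional intrinsic volumes, and by further probing $\oZ_j$ on smooth radial or cylindrical test functions in $\fconvs\cap C_+^2(\R^n)$ where \eqref{eq:rep_ozz_c2} can be computed directly, one should extract a single density $\zeta_j$ such that $\oZ_j$ and $\oZZ{j}{\zeta_j}^n$ agree on a dense subset of $\fconvs$, hence everywhere by continuity.

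The main obstacle I anticipate is verifying that this extracted $\zeta_j$ actually belongs to $\Had{j}{n}$—in particular, the vanishing $\lim_{s\to 0^+} s^{n-j}\zeta_j(s)=0$ and the finiteness of $\lim_{s\to 0^+}\int_s^\infty t^{n-j-1}\zeta_j(t)\d t$. These regularity constraints correspond precisely to the integrability required to continuously extend \eqref{eq:rep_ozz_c2} from $\fconvs\cap C_+^2(\R^n)$ to all of $\fconvs$; I would confirm them by testing $\oZ_j$ against families of super-coercive functions whose epigraphs degenerate onto lower-dimensional affine subspaces and matching the resulting limit values against the required integral thresholds. A secondary subtlety is the base case of the induction, where the intermediate range collapses and the classification on $\fconvs(\R)$ must be handled separately before feeding back into the inductive step.
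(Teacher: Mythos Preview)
This theorem is not proved in the present paper. It is quoted from \cite{Colesanti-Ludwig-Mussnig-5} (their Theorem~1.3) and serves here as an input---most notably in the second proof of Theorem~\ref{thm:cauchy_kubota_for_hessian_vals} in Section~\ref{se:ck_hadwiger} and in the derivation of Theorem~\ref{thm2:hadwiger_convex_functions_ck}. There is therefore no proof in this paper against which to compare your proposal.

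A few remarks on the proposal itself. The overall architecture---homogeneous decomposition, direct treatment of degrees $0$ and $n$, and a Klain-type inductive reduction for the intermediate degrees---is a natural one and broadly in line with how classifications of this kind are established. One attribution point: the homogeneous decomposition for continuous, epi-translation invariant valuations on $\fconvs$ is proved in \cite{Colesanti-Ludwig-Mussnig-4}; the papers \cite{Knoerr1,Knoerr2} treat the dual picture on $\fconvf$. More substantively, the passage where you ``extract a single density $\zeta_j$'' from the restricted valuations and then verify $\zeta_j\in\Had{j}{n}$ is the actual core of the classification, and your sketch does not supply a mechanism for either step. You correctly identify both the extraction and the membership verification as the main obstacles, but neither is resolved: testing against unspecified ``smooth radial or cylindrical'' families and ``degenerating epigraphs'' is a description of what one would like to do, not an argument that it works. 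As written this is a plausible strategy rather than a proof.
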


\noindent
Theorem~\ref{hugo} and Theorem~\ref{thm:hadwiger_convex_functions} show that the functionals $\oZZ{j}{\zeta}^n$ clearly play the role of intrinsic volumes on $\fconvs$.

In this article, we present an integral-geometric approach to valuations on convex functions. We obtain a new version of the Hadwiger theorem on $\fconvs$, Theorem \ref{thm2:hadwiger_convex_functions_ck}, based on new functional Cauchy--Kubota formulas, and we present a new proof of Theorem~\ref{thm:existence_singular_hessian_vals}.
First, we establish a new integral-geometric representation of the functionals $\oZZ{j}{\zeta}^n$, corresponding to the Cauchy--Kubota formulas \eqref{cauchy_kubota}.
For a linear subspace $E\subseteq \R^n$, we write $\fconvsE$ for the set of proper, lower semicontinuous, super-coercive, convex functions $w:E\to (-\infty, +\infty]$. For $u\in\fconvs$, define the \emph{projection function} $\proj_E u: E\to (-\infty,\infty]$ by
$$\proj_E u(x_E) := \min\nolimits_{z\in E^\perp} u(x_E+z)$$
for $x_E\in E$, where $E^\perp$ denotes the orthogonal complement of $E$. 
If $\oZ:\fconvsk\to\R$ is $\O(k)$ invariant and $\dim E=k$, we define $\oZ$ on $\fconvsE$ by identifying $\fconvsE$ with $\fconvsk$ 
(see Section~\ref{se:proj_fcts}).

\begin{theorem}
	\label{thm:cauchy_kubota_for_hessian_vals}
	Let $\,0\leq j \leq k<n$. If $\zeta\in\Had{j}{n}$, then
	\begin{equation}
	\label{eq:cauchy_kubota_via_hadwiger}
	\oZZ{j}{\zeta}^n(u) =   \frac{\kappa_n}{\kappa_k \kappa_{n-k} }\binom{n}{k} \int_{\Grass{k}{n}}  \oZZ{j}{\xi}^k(\proj_E u) \d E 
	\end{equation}
	for every $u\in\fconvs$, where $\xi\in\Had{j}{k}$ is given by
    \begin{equation}
    \label{eq:def_xi}
    \xi(s):= \frac{\kappa_{n-k}}{\binom{n-j}{k-j}} \big(s^{n-k}\zeta(s) +(n-k) \int_s^{\infty}  t^{n-k-1}\zeta(t)\d t\big)
    \end{equation}
for $s>0$.
\end{theorem}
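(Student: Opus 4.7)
The plan is to treat the right-hand side of \eqref{eq:cauchy_kubota_via_hadwiger} as a functional $\tilde\oZ$ of $u$, verify that it satisfies the hypotheses of the Hadwiger theorem on convex functions (Theorem~\ref{thm:hadwiger_convex_functions}), use its degree of homogeneity under epi-multiplication to isolate the correct summand, and then identify the defining density by testing on radial functions. A routine Fubini calculation using the bounded support of $\zeta$ first shows $\xi\in\Had{j}{k}$.

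The initial step verifies that $\tilde\oZ$ is a continuous, epi-translation and rotation invariant valuation on $\fconvs$. Continuity and invariance follow from the corresponding properties of $\oZZ{j}{\xi}^k$ combined with the compatibilities $\proj_E(u\circ\tau^{-1}+\alpha)=(\proj_E u)\circ\tau_E^{-1}+\alpha$ and $\proj_{\vartheta E}(u\circ\vartheta^{-1})=(\proj_E u)\circ\vartheta^{-1}$, together with dominated convergence for the Grassmannian integral. The valuation property is the main obstacle I anticipate: for $u_1,u_2\in\fconvs$ with $u_1\wedge u_2\in\fconvs$, the identity $\proj_E(u_1\wedge u_2)=\proj_E u_1\wedge\proj_E u_2$ is immediate from the definition, while $\proj_E(u_1\vee u_2)=\proj_E u_1\vee\proj_E u_2$ is a convex-body-style intermediate-value argument on epigraphs exploiting that $\epi u_1\cup\epi u_2$ is convex. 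These commutations then promote the valuation identity for $\oZZ{j}{\xi}^k$ to one for $\tilde\oZ$.

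Next, the change of variables $y=x/\lambda$ in \eqref{eq:rep_ozz_c2} yields $\oZZ{j}{\xi}^k(\lambda\infconv w)=\lambda^j\oZZ{j}{\xi}^k(w)$ for the epi-multiplication $(\lambda\infconv w)(x):=\lambda w(x/\lambda)$, and $\proj_E(\lambda\infconv u)=\lambda\infconv\proj_E u$, so $\tilde\oZ$ is epi-homogeneous of degree $j$. Applying Theorem~\ref{thm:hadwiger_convex_functions} expresses $\tilde\oZ=\sum_{i=0}^n\oZZ{i}{\eta_i}^n$ with $\eta_i\in\Had{i}{n}$, and matching degrees of homogeneity in $\lambda$ forces $\eta_i\equiv 0$ for $i\ne j$, so $\tilde\oZ=\oZZ{j}{\eta}^n$ for some $\eta\in\Had{j}{n}$.

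Finally, I identify $\eta=\zeta$ by evaluating on radial $u_\phi(x):=\phi(|x|)$ for $\phi\in C^2([0,\infty))$ convex, increasing, super-coercive, with $\phi'(0)=0$ and $\phi''(0)>0$. Since $\proj_E u_\phi(x_E)=\phi(|x_E|)$ on every $E\in\Grass{k}{n}$, polar coordinates together with $s=\phi'(r)$ and $\sigma:=(\phi')^{-1}$ reduce \eqref{eq:rep_ozz_c2} to
\begin{equation*}
\oZZ{j}{\zeta}^n(u_\phi)=\frac{n\,\kappa_n\,\binom{n-1}{j}}{n-j}\int_0^\infty\zeta(s)\bigl(s^{n-j}\sigma(s)^j\bigr)'\d s,
\end{equation*}
with an analogous formula in dimension $k$ for $\oZZ{j}{\xi}^k(\proj_E u_\phi)$ involving $h(s):=s^{k-j}\sigma(s)^j$ and $\xi$ in place of $\zeta$. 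Substituting \eqref{eq:def_xi}, splitting the $k$-dimensional integral into a polynomial piece and a tail piece and integrating the latter by parts after Fubini, the pointwise identity $s^{n-k-1}\bigl(sh'(s)+(n-k)h(s)\bigr)=\bigl(s^{n-j}\sigma(s)^j\bigr)'$ collapses the result to $\tfrac{\kappa_{n-k}}{\binom{n-j}{k-j}}\int_0^\infty\zeta(s)(s^{n-j}\sigma(s)^j)'\,\d s$, and the remaining binomial and $\kappa$-factors balance via the standard identity $\binom{n}{k}\binom{k}{j}=\binom{n}{j}\binom{n-j}{k-j}$. Varying $\sigma$ along perturbations $\sigma_\epsilon(s)=s+\epsilon f(s)$ yields enough Mellin-type test data, and since elements of $\Had{j}{n}$ have bounded support, this forces $\eta=\zeta$, completing the proof.
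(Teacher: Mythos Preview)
Your overall strategy coincides with the paper's \emph{second} proof of the theorem (Section~\ref{se:ck_hadwiger}): show that the right-hand side defines a continuous, epi-translation and $\O(n)$ invariant valuation that is epi-homogeneous of degree $j$, invoke Theorem~\ref{thm:hadwiger_convex_functions} to write it as $\oZZ{j}{\tilde\zeta}^n$, and then identify $\tilde\zeta=\zeta$ by evaluating on a family of test functions. The paper packages the first step into Lemma~\ref{le:val_int_grass} (with the same $\proj_E$ compatibilities you note) and establishes $\xi\in\Had{j}{k}$ via the bijection $\cR^{n-k}\colon\Had{j}{n}\to\Had{j}{k}$ of Lemma~\ref{le:r_kln}, not just a ``routine Fubini''. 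You should be aware that the paper's \emph{primary} proof (Section~\ref{se:ck_direct}) is entirely different: it is a direct computation on $\fconvs\cap C_+^2(\R^n)$ via the coarea formula and the integral-geometric identity \eqref{Schneider1} for curvature measures, avoiding Theorem~\ref{thm:hadwiger_convex_functions} altogether.

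The substantive weakness in your proposal is the identification step. You test on smooth radial functions $u_\phi$ and appeal to a vague ``Mellin-type'' perturbation $\sigma_\epsilon(s)=s+\epsilon f(s)$; as written this is not a proof, since you neither specify the admissible class of $f$ (recall $\sigma_\epsilon$ must be the inverse of $\phi'$ for a super-coercive $\phi\in C^2$ with $\phi'(0)=0$, $\phi''>0$) nor show that the resulting family of test functionals separates points of $\Had{j}{n}$. The paper sidesteps this entirely by evaluating on the \emph{non-smooth} family $u_t(x)=t|x|+\ind_{\Bn}(x)$, for which Lemma~\ref{le:calc_ind_bn_tx_theta_i} gives the closed form $\oZZ{j}{\zeta}^n(u_t)=\kappa_n\binom{n}{j}\cR^{n-j}\zeta(t)$; since $\proj_E u_t$ has the same shape in $E$, one obtains $\cR^{n-j}\tilde\zeta=\cR^{n-j}\zeta$ on $[0,\infty)$, and the bijectivity of $\cR^{n-j}$ from Lemma~\ref{le:r_kln} forces $\tilde\zeta=\zeta$ immediately. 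Replacing your radial $C^2_+$ test family with $u_t$ closes the gap without further work.
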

\noindent
Here we set $\kappa_0:=1$ and $\Had{0}{0}:=\Had{1}{1}$. Further, let $\oZZ{0}{\xi}^0(\proj_E u) := \xi(0)$ for $\xi \in\Had{0}{0}$.

\goodbreak
In the proof of this theorem we make essential use of results from \cite{Colesanti-Ludwig-Mussnig-5} that were established for the proof of Theorem~\ref{thm:existence_singular_hessian_vals}. We also use tools from the integral geometry of convex bodies. The proof of Theorem \ref{thm:cauchy_kubota_for_hessian_vals} and our new proof of Theorem~\ref{thm:existence_singular_hessian_vals} are presented in Section~\ref{se:ck_direct}. Note that embedding $\cK^n$ into $\fconvs$, we see that \eqref{eq:cauchy_kubota_via_hadwiger} generalizes the classical Cauchy--Kubota formulas (see Section~\ref{se:ret_intr_vols}).

\goodbreak
As a consequence of Theorem~\ref{thm:cauchy_kubota_for_hessian_vals} (with $j=k$) and the representation of the functional intrinsic volume  
for $j=n$ in (\ref{n-hom}), we immediately obtain the following representation of $\oZZ{j}{\zeta}^n$ for $0\le j<n$. This is the first explicit representation of functional intrinsic volumes as integrals, as in \cite{Colesanti-Ludwig-Mussnig-5} limits using Moreau--Yosida approximation were used.

\begin{theorem}\label{cauchy_function}
Let $\,0\le j<n$. If $\zeta\in\Had{j}{n}$,  then
\begin{equation*}
\oZZ{j}{\zeta}^n(u)= \frac{\kappa_n}{\kappa_j\kappa_{n-j}} \binom{n}{j} \int_{\Grass{j}{n}}  \int_{\dom (\proj_E u)} \alpha(|\nabla \proj_E u(x_E)|) \d x_E \d E
\end{equation*}
for every $u\in\fconvs$, where $\alpha\in C_c([0,\infty))$ is given by 
$$\alpha(s):=  \kappa_{n-j} \big(  s^{n-j}\zeta(s)+(n-j)\int_s^{\infty}  t^{n-j-1} \zeta(t)\d t\big)$$
for $s>0$. 
\end{theorem}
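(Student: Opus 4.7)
The plan is to derive the statement by specializing Theorem \ref{thm:cauchy_kubota_for_hessian_vals} to the diagonal case $k = j$ and combining with the top-degree representation \eqref{n-hom}. With $k = j$, the binomial coefficient $\binom{n-j}{k-j}$ reduces to $\binom{n-j}{0} = 1$, so the auxiliary density $\xi$ of \eqref{eq:def_xi} simplifies to exactly $\alpha$. Hence \eqref{eq:cauchy_kubota_via_hadwiger} becomes
$$\oZZ{j}{\zeta}^n(u) = \frac{\kappa_n}{\kappa_j \kappa_{n-j}}\binom{n}{j} \int_{\Grass{j}{n}} \oZZ{j}{\alpha}^j(\proj_E u)\d E,$$
where each $\proj_E u$ is regarded as an element of $\fconvsE$ with $\dim E = j$.

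The next step is to verify that $\alpha$ actually lies in $\Had{j}{j}$, the top-dimensional class for $\R^j$, so that \eqref{n-hom} applies in dimension $j$. Since $\zeta \in C_b((0,\infty))$ has bounded support, the same holds for $\alpha$, and $\alpha$ is clearly continuous on $(0,\infty)$. The two limit conditions in the definition of $\Had{j}{n}$ yield $\lim_{s \to 0^+} s^{n-j}\zeta(s) = 0$ and a finite value for $\lim_{s \to 0^+}\int_s^\infty t^{n-j-1}\zeta(t)\d t$, so $\alpha$ has a finite limit at $0$. This is precisely the condition for membership in $\Had{j}{j}$.

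Finally, I would identify $E \in \Grass{j}{n}$ with $\R^j$ (invoking the $\O(j)$ invariance of $\oZZ{j}{\alpha}^j$ to make this unambiguous, as in the convention introduced before Theorem \ref{thm:cauchy_kubota_for_hessian_vals}) and apply \eqref{n-hom} in dimension $j$ to $\proj_E u$, which gives
$$\oZZ{j}{\alpha}^j(\proj_E u) = \int_{\dom(\proj_E u)} \alpha(|\nabla \proj_E u(x_E)|)\d x_E.$$
Inserting this into the Cauchy--Kubota representation above yields the claim. There is no substantive obstacle — the theorem is a direct reduction of Theorem \ref{thm:cauchy_kubota_for_hessian_vals} to its diagonal case, with the only bookkeeping being the trivial simplification $\binom{n-j}{0}=1$ and the check that $\alpha \in \Had{j}{j}$.
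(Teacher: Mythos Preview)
Your proposal is correct and follows essentially the same route as the paper: the authors state that Theorem~\ref{cauchy_function} is an immediate consequence of Theorem~\ref{thm:cauchy_kubota_for_hessian_vals} with $k=j$ together with the top-degree representation \eqref{n-hom}. Your additional verification that $\alpha\in\Had{j}{j}$ is what the paper handles via Lemma~\ref{Rtol} (recognizing $\alpha=\kappa_{n-j}\cR^{n-j}\zeta$) and Lemma~\ref{le:r_kln}.
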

\noindent
Here, in the case $j=0$, we set $\nabla\proj_E u(x_E):=0$ and $\oZZ{0}{\zeta}^n(u):=\alpha(0)$ for $u\in\fconvs$. Note that a convex function is differentiable almost everywhere on the interior of its domain and hence the integral representing $\oZZ{j}{\zeta}^n(u)$ is well-defined for $u\in\fconvs$.

\goodbreak
Theorem \ref{thm:hadwiger_convex_functions} and Theorem \ref{cauchy_function} imply the following new version of the Hadwiger theorem for convex functions, which corresponds to Theorem~\ref{thm2:hadwiger}. Let $n\geq 2$.

\begin{theorem}
\label{thm2:hadwiger_convex_functions_ck}
A functional $\oZ:\fconvs \to \R$ is a continuous, epi-translation and rotation invariant valuation if and only if there exist  functions $\alpha_0, \dots, \alpha_n \in C_c([0,\infty))$ such that
\begin{equation*}
\oZ(u)=  \sum_{j=0}^{n}\int_{\Grass{j}{n}}  \int_{\dom (\proj_E u)} \alpha_j(|\nabla \proj_E u(x_E)|) \d x_E \d E 
\end{equation*}
for every $u\in\fconvs$. 
\end{theorem}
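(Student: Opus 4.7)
The plan is to derive this new form of the Hadwiger theorem by translating between the two representations via Theorem \ref{thm:hadwiger_convex_functions} and the explicit integral formula for functional intrinsic volumes in Theorem \ref{cauchy_function} (together with (\ref{n-hom}) for $j=n$). Necessity is essentially a substitution; sufficiency requires inverting the transformation $\zeta\mapsto\alpha$ appearing in Theorem \ref{cauchy_function} and verifying that the inverse lies in $\Had{j}{n}$. Concretely, for \emph{necessity}, given a continuous, epi-translation and rotation invariant valuation $\oZ$, Theorem \ref{thm:hadwiger_convex_functions} yields $\zeta_j\in\Had{j}{n}$ with $\oZ(u)=\sum_{j=0}^{n}\oZZ{j}{\zeta_j}^n(u)$; Theorem \ref{cauchy_function} then rewrites each $\oZZ{j}{\zeta_j}^n(u)$ for $0\le j<n$ as the $j$-th summand in the claimed form, with kernel
\[
\alpha_j(s):=\frac{\kappa_n\binom{n}{j}}{\kappa_j}\Bigl(s^{n-j}\zeta_j(s)+(n-j)\int_s^\infty t^{n-j-1}\zeta_j(t)\d t\Bigr),
\]
obtained by absorbing the prefactor of Theorem \ref{cauchy_function} into $\alpha$. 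I verify $\alpha_j\in C_c([0,\infty))$: continuity on $(0,\infty)$ is immediate; the finite limit at $0^+$ follows from $\lim_{s\to 0^+}s^{n-j}\zeta_j(s)=0$ and the existence of $\lim_{s\to 0^+}\int_s^\infty t^{n-j-1}\zeta_j(t)\d t$ (both built into the definition of $\Had{j}{n}$); compact support follows from that of $\zeta_j$. For $j=n$, formula (\ref{n-hom}) gives the representation directly with $\alpha_n:=\zeta_n$.

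For \emph{sufficiency}, given $\alpha_0,\dots,\alpha_n\in C_c([0,\infty))$, I produce $\zeta_j\in\Had{j}{n}$ such that $\oZZ{j}{\zeta_j}^n(u)$ equals the $j$-th summand; Theorem \ref{thm:hadwiger_convex_functions} then completes the proof. Take $\zeta_n:=\alpha_n\in\Had{n}{n}$, and for $j=0$ pick any $\zeta_0\in\Had{0}{n}$ with $n\kappa_n\int_0^\infty t^{n-1}\zeta_0(t)\d t=\alpha_0(0)$. For $0<j<n$, I invert the linear transformation above. Setting $\psi(s):=\int_s^\infty t^{n-j-1}\zeta(t)\d t$ turns the inversion into the first-order linear ODE $s\psi'(s)-(n-j)\psi(s)=-c\,\alpha_j(s)$ for an explicit constant $c$; applying the integrating factor $s^{-(n-j)}$ and the boundary condition $\psi(\infty)=0$ determines $\psi(s)$ and, via $\zeta(s)=-\psi'(s)/s^{n-j-1}$, gives the explicit formula
\[
\zeta_j(s)=c_j\Bigl(\frac{\alpha_j(s)}{s^{n-j}}-(n-j)\int_s^\infty\frac{\alpha_j(t)}{t^{n-j+1}}\d t\Bigr)
\]
for a computable constant $c_j$.

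The principal technical step is verifying that this $\zeta_j$ belongs to $\Had{j}{n}$. Its continuity on $(0,\infty)$ and compact support are immediate, but boundedness near $0^+$ is delicate: each of the two terms individually diverges like $\alpha_j(0)/s^{n-j}$, and the cancellation must be made explicit. Using the identity
\[
\int_s^\infty\frac{\alpha_j(t)}{t^{n-j+1}}\d t=\frac{\alpha_j(s)}{(n-j)s^{n-j}}+\int_s^\infty\frac{\alpha_j(t)-\alpha_j(s)}{t^{n-j+1}}\d t,
\]
one rewrites $\zeta_j(s)$ as a constant multiple of $\int_s^\infty(\alpha_j(s)-\alpha_j(t))/t^{n-j+1}\d t$, which can be controlled via the modulus of continuity of $\alpha_j$, if necessary after a density reduction to smooth $\alpha_j$ followed by a continuous extension. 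The remaining limit conditions defining $\Had{j}{n}$ then follow from boundedness and compact support of $\zeta_j$. Once $\zeta_j\in\Had{j}{n}$ is established, Theorem \ref{cauchy_function} shows that $\oZZ{j}{\zeta_j}^n(u)$ equals the $j$-th summand as required, and summation over $j$ recovers $\oZ$, which is a continuous, epi-translation and rotation invariant valuation as a finite sum of such.
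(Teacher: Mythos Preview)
Your overall strategy coincides with the paper's: derive both directions from Theorem~\ref{thm:hadwiger_convex_functions} together with Theorem~\ref{cauchy_function} (and \eqref{n-hom} for $j=n$), using that the transform $\zeta\mapsto\alpha$ is a bijection between $\Had{j}{n}$ and $C_c([0,\infty))$. Your necessity argument is correct, and in sufficiency you correctly arrive at the inverse formula $\zeta_j(s)=c_j\bigl(\alpha_j(s)/s^{n-j}-(n-j)\int_s^\infty \alpha_j(t)\,t^{-(n-j+1)}\d t\bigr)$, which is exactly the inverse $\cR^{-(n-j)}$ of Lemma~\ref{le:r_kln}.

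The gap is in your verification that $\zeta_j\in\Had{j}{n}$. You aim to show that $\zeta_j$ is \emph{bounded} near $0^+$ and then deduce the two limit conditions from that. But $\zeta_j$ is \emph{not} bounded in general, not even for smooth $\alpha_j$: already for $n-j=1$ and $\alpha_j$ linear near $0$, your own rewriting gives $\zeta_j(s)=c_j\int_s^\infty(\alpha_j(s)-\alpha_j(t))\,t^{-2}\d t\sim c_j\log s$ as $s\to 0^+$; for $n-j=2$ one gets a divergence of order $1/s$, etc. Hence the proposed ``modulus of continuity'' control cannot succeed, and the suggested density reduction to smooth $\alpha_j$ does not help, since smoothness of $\alpha_j$ does not yield boundedness of $\zeta_j$. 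Membership in $\Had{j}{n}$ does not require boundedness; what must be checked is $\lim_{s\to0^+}s^{n-j}\zeta_j(s)=0$ and the existence of $\lim_{s\to0^+}\int_s^\infty t^{n-j-1}\zeta_j(t)\d t$. These follow by direct computation (integration by parts together with Lemma~\ref{le:lim_t_int_zeta_new}), exactly as carried out in the proof of Lemma~\ref{le:r_kln}. Once you invoke Lemma~\ref{le:r_kln} instead of attempting boundedness, your proof is complete and matches the paper's.
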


\noindent
Note that by Theorem \ref{cauchy_function} and properties of the integral transform which maps $\zeta$ to $\alpha$ (see Lemma \ref{le:r_kln}), Theorem \ref{thm2:hadwiger_convex_functions_ck}  is in fact equivalent to Theorem~\ref{thm:hadwiger_convex_functions}.

In Section~\ref{se:dual}, we present results for valuations on
$\fconvf\!\!:=\{v:\R^n\to \R\colon v \text{ is convex}\}$, the space of finite-valued convex functions. The results are obtained from results for valuations on $\fconvs$ by using the Legendre--Fenchel transform or convex conjugate. The new Cauchy--Kubota formulas correspond to results on restrictions of convex functions to linear subspaces in this setting.

In the final section, we collect several applications and results. In particular, 
we present a second proof of Theorem~\ref{thm:cauchy_kubota_for_hessian_vals} which uses Theorem~\ref{thm:hadwiger_convex_functions}. Thus, similar to the classical Cauchy--Kubota formulas \eqref{cauchy_kubota}, Theorem~\ref{thm:cauchy_kubota_for_hessian_vals} can be  proved both directly and as a consequence of the Hadwiger theorem. 
We also obtain connections between functional intrinsic volumes and their classical counterparts
and answer questions about non-negative and monotone valuations.

\section{Preliminaries}
We work in $n$-dimensional Euclidean space $\R^n$, with $n\ge 1$, endowed with the Euclidean norm $\vert \cdot\vert $ and the standard scalar product
$\langle \cdot,\cdot\rangle$. We also use coordinates, $x=(x_1,\dots,x_n)$, for $x\in\R^n$. 
For $k\le n$, we often identify $\R^k$ with $
\{x\in\R^n\colon x_{k+1}=\dots=x_n=0\}$.
Let $\Bn\!:=\{x\in\R^n: \vert x\vert \le 1\}$ be the Euclidean unit ball and $\sn$ the unit sphere in $\R^n$.

\subsection{Convex Bodies}\label{convex bodies} A basic reference on convex bodies is the book by Schneider \cite{Schneider:CB2}. For $K\in\cK^n$, its {\em support function} $h_K:\R^n\to \R$ is defined as
$$h_K(x):=\max\nolimits_{y\in K}\langle x,y \rangle.$$
It is a one-homogeneous and convex function that determines $K$. 

For $K\in\cK^n$ and $0\leq j \leq n-1$, let $C_j(K,\cdot)$ be its $j$th curvature measure (see \cite{Schneider:CB2}). We require the following integral-geometric formula.
Let $0\le j\le k<n$. By (4.79) in \cite{Schneider:CB2}, for every $K\in\cK^n$ and every Borel set $B \subseteq \bd K$,  we have
\begin{equation}\label{Schneider1}
C_{j}(K, B)=\frac{n \kappa_n}{k \kappa_k} \int_{\Grass{k}{n}}C^E_{j}(\proj_E K,\proj_E B)\d E,
\end{equation}
where  $C^E_j(\proj_E K,\cdot)$ is the $j$th curvature measure of the convex body $\proj_E K$ taken with respect to the
subspace $E$ and $\bd K$ is the boundary of $K$. 

Under suitable regularity assumptions, curvature measures can be expressed in terms of the principal curvatures of the boundary. Let $K\in\cK^n$ have boundary of class $C^2$ with positive Gauss curvature. For $0\leq j \leq n-1$ and $x\in\bd K$, let $\tau_j(K,x)$ be the $j$th elementary symmetric function of the principal curvatures of $\bd K$ at $x$. By (2.36) and (4.25) in \cite{Schneider:CB2}, we have
\begin{equation}\label{curvature measures smooth case}
C_j(K,B)=\binom{n-1}{n-1-j}^{-1}\int_B\tau_{n-1-j}(K,x)\d\hm^{n-1}(x)
\end{equation}
for every $0\leq j \leq n-1$ and for every Borel set $B\subseteq\bd K$,  where $\hm^k$ is the $k$-dimensional Hausdorff measure.

\subsection{Convex Functions}\label{convex functions}
We collect some basic results and properties of convex functions. Standard references are the books by Rockafellar \cite{Rockafellar} and Rockafellar \& Wets \cite{RockafellarWets} (also, see \cite{ColesantiLudwigMussnig}).

Let $\fconvx$ be the set of proper, lower semicontinuous, convex functions $u:\R^n\to(-\infty,\infty]$. 
Every function $u\in\fconvx$ is uniquely determined by its \emph{epi-graph}
$$\epi u := \{(x,t)\in \R^n\times \R\colon u(x)\leq t\},$$
which is a closed, convex subset of $\R^{n+1}$. 
For $t\in\R$, we write
$$\{u< t\}:=\{x\in\R^n\colon u(x)<t\},\quad\quad \{u\leq t\}:=\{x\in\R^n\colon u(x)\leq t\}$$
for the \emph{sublevel sets} of $u$, which are convex subsets of $\R^n$. Since $u$ is lower semicontinuous, the sublevel sets $\{u\leq t\}$ are closed. If in addition $u\in\fconvs$, then the sublevel sets are bounded.
Similarly, we write
$$\{u=t\}:=\{x\in\R^n\colon u(x)=t\},\quad\quad \{t_1 < u \leq t_2\}:=\{x\in\R^n\colon t_1 < u(x) \leq t_2\}$$
for $t\in\R$ and  $t_1<t_2$.

The standard topology on $\fconvx$ and its subsets is induced by epi-convergence. A sequence of functions $u_k\in\fconvx$ is \emph{epi-convergent} to $u\in\fconvx$ if for every $x\in\R^n$:
\begin{enumerate}
    \item[(i)] $u(x)\leq \liminf_{k\to\infty} u_k(x_k)$ for every sequence $x_k\in\R^n$ that converges to $x$;
    \item[(ii)] $u(x)=\lim_{k\to\infty} u_k(x_k)$ for at least one sequence $x_k\in\R^n$ that converges to $x$.
\end{enumerate}
Note that the limit of an epi-convergent sequence of functions from $\fconvx$ is always lower semi\-continuous. 

A sequence of functions $v_k\in\fconvf$ is epi-convergent to $v\in\fconvf$ if and only if $v_k$ converges pointwise to $v$, which by convexity is equivalent to uniform convergence on compact sets. 
On $\fconvs$, epi-convergence is, basically,  equivalent to Hausdorff convergence of sublevel sets. Here we say that for a sequence $u_k\in\fconvs$, the sets $\{u_k\leq t \}$ converge to the empty set, if there exists $k_0\in\N$ such that $\{u_k\leq t \}=\emptyset$ for every $k\geq k_0$.

\begin{lemma}
\label{le:hd_conv_lvl_sets}
Let $u_k,u\in\fconvs$. If $u_k$ epi-converges to $u$, then $\{u_k\leq t \}$ converges to $\{u\leq t\}$ for every $t\neq \min_{x\in\R^n} u(x)$. Conversely, if for every $t\in\R$ there exists a sequence $t_k\to t$ such that $\{u_k\leq t_k\}$ converges to $\{u\leq t\}$, then $u_k$ epi-converges to $u$.
\end{lemma}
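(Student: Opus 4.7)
The plan is to handle the two implications separately by translating conditions (i) and (ii) of the definition of epi-convergence into statements about sublevel sets, which are precisely the horizontal slices of the epigraphs. The workhorse in both directions is the standard equivalence, on uniformly bounded compact convex sets, between Kuratowski--Painlev\'e convergence and Hausdorff convergence.

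For the forward direction, I would fix $t\neq\min_{x\in\R^n}u(x)$ and first aim at Kuratowski convergence $\{u_k\leq t\}\to\{u\leq t\}$. The outer inclusion $\limsup_k\{u_k\leq t\}\subseteq\{u\leq t\}$ is immediate from (i): any accumulation point $x$ of a sequence $x_{k_j}\in\{u_{k_j}\leq t\}$ satisfies $u(x)\leq\liminf_j u_{k_j}(x_{k_j})\leq t$. For the inner inclusion, when $u(x)<t$ condition (ii) supplies $x_k\to x$ with $u_k(x_k)\to u(x)<t$, so that $x_k\in\{u_k\leq t\}$ for $k$ large. When $u(x)=t$ we are necessarily in the regime $t>\min u$ (if $t<\min u$, then $\min u_k\to\min u$ forces both sublevel sets to be empty for large $k$), so I would approximate $x$ by the convex combinations $x_\lambda:=(1-\lambda)x+\lambda x_0$, where $x_0$ is a minimizer of $u$; convexity gives $u(x_\lambda)\leq(1-\lambda)t+\lambda\min u<t$, so each $x_\lambda$ lies in $\liminf_k\{u_k\leq t\}$ by the previous case, and closedness of the inner limit yields $x\in\liminf_k\{u_k\leq t\}$ as $\lambda\to 0^+$. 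Finally, to promote Kuratowski to Hausdorff convergence one needs uniform boundedness of the $\{u_k\leq t\}$, which follows from the super-coercivity of $u$ (placing $u$ above $t+1$ outside a large ball), combined with (i) and a convexity argument that rules out a runaway sequence.

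For the converse, I would verify (i) and (ii) directly from the sublevel-set hypothesis. For (i), given $x_k\to x$, assume for contradiction that $\liminf_k u_k(x_k)<u(x)$ and pick $t$ strictly between these two values. The hypothesis provides $t_k\to t$ with $\{u_k\leq t_k\}\to\{u\leq t\}$ in Hausdorff distance; along a subsequence with $u_{k_j}(x_{k_j})<t\leq t_{k_j}$, one has $x_{k_j}\in\{u_{k_j}\leq t_{k_j}\}$, which forces $x\in\{u\leq t\}$ in the limit, contradicting $t<u(x)$. For (ii) with $u(x)<\infty$, I would apply the hypothesis at $t:=u(x)$: since $x\in\{u\leq t\}$, the inner-limit characterization of Hausdorff convergence supplies $x_k\in\{u_k\leq t_k\}$ with $x_k\to x$, so $\limsup_k u_k(x_k)\leq\lim_k t_k=u(x)$, which combined with (i) gives $u_k(x_k)\to u(x)$. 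The case $u(x)=+\infty$ reduces to (i) applied to the constant sequence $x_k\equiv x$.

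The main obstacle is the boundary case $u(x)=t$ in the forward inner inclusion, which is precisely why the exclusion $t=\min u$ appears in the hypothesis: for instance $u_k:=u+1/k$ epi-converges to $u$, yet $\{u_k\leq\min u\}=\emptyset$ for every $k$ while $\{u\leq\min u\}$ is the non-empty minimizing set, so Hausdorff convergence fails at this level. The convex-combination trick circumvents this at every other level by producing nearby points strictly inside the sublevel set; crucially, the existence of a minimizer of $u$ is guaranteed by super-coercivity and lower semicontinuity. A secondary technical point is the upgrade from Kuratowski to Hausdorff convergence, which again depends essentially on the super-coercivity built into $\fconvs$ to rule out escape of mass to infinity.
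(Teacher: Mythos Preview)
The paper does not actually prove this lemma; it is stated without proof in Section~2.2 as a standard preliminary fact relating epi-convergence in $\fconvs$ to Hausdorff convergence of sublevel sets (cf.\ Rockafellar--Wets). Your outline supplies the natural direct argument and is essentially correct.

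Two minor repairs are worth flagging. In the converse direction, when verifying (i), the chain ``$u_{k_j}(x_{k_j})<t\le t_{k_j}$'' need not hold as written, since $t_{k_j}$ may lie below $t$; what you actually want (and easily get) is some $\varepsilon>0$ with $u_{k_j}(x_{k_j})<t-\varepsilon<t_{k_j}$ for all large $j$. In the forward direction, the case $t<\min u$ is not settled merely by asserting $\min u_k\to\min u$: that convergence of minima is itself a consequence of eventual uniform level-boundedness of the $u_k$, i.e., precisely the ``no runaway sequence'' argument you later sketch for the Kuratowski-to-Hausdorff upgrade. Once that uniform bound is in hand (via super-coercivity of $u$, condition (i), and the convex-interpolation trick with a point where $u_k$ is controlled), both the emptiness of $\{u_k\le t\}$ for $t<\min u$ and the upgrade to Hausdorff convergence follow. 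These are routine completions rather than gaps in the strategy.
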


For $u\in\fconvx$, let $u^*\in\fconvx$ be its \emph{Legendre--Fenchel transform} or \emph{convex conjugate}, which is defined by
$$u^*(y):=\sup\nolimits_{x\in\R^n} \big(\langle x,y \rangle - u(x) \big)$$
for $y\in\R^n$. Since $u$ is lower semicontinuous, $u^{**}=u$. Moreover, $u\in\fconvs$ if and only if $u^*\in\fconvf$, and $u\in \fconvs\cap C_+^2(\R^n)$ if and only if $u^*\in \fconvs\cap C_+^2(\R^n)$.

\begin{lemma}\label{wijsman}
A sequence of functions $u_k$ in $\fconvx$ is epi-convergent to $u\in\fconvx$ if and only if $u_k^*$ is epi-convergent to $u^*$.
\end{lemma}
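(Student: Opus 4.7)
This is Wijsman's classical theorem on the continuity of the Legendre--Fenchel transform under epi-convergence. Since $u^{**}=u$ on $\fconvx$, the two implications are symmetric under $u\leftrightarrow u^*$, so it suffices to prove one direction; the converse follows by applying that direction to the sequence $(u_k^*)$ and its limit $u^*$.

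Assume $u_k\to u$ epi-convergently, and verify each clause of epi-convergence for $u_k^*\to u^*$. Clause (i), the liminf inequality: given $y_k\to y$ and any $x\in\R^n$, clause (ii) of the hypothesis produces $\tilde x_k\to x$ with $u_k(\tilde x_k)\to u(x)$. The Fenchel--Young inequality gives
\[
u_k^*(y_k)\ge \langle \tilde x_k,y_k\rangle-u_k(\tilde x_k),
\]
hence $\liminf_k u_k^*(y_k)\ge \langle x,y\rangle-u(x)$; taking the supremum over $x\in\R^n$ yields $\liminf_k u_k^*(y_k)\ge u^*(y)$, as required.

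Clause (ii) demands a recovery sequence $y_k\to y$ with $u_k^*(y_k)\to u^*(y)$. If $u^*(y)=+\infty$, the constant choice $y_k=y$ works by the bound just proved. Otherwise, the plan is to exploit Moreau--Yosida duality: the regularization $u^\lambda:=u\,\infconv\,\tfrac{1}{2\lambda}|\cdot|^2$ is finite-valued and smooth with conjugate $(u^\lambda)^*=u^*+\tfrac{\lambda}{2}|\cdot|^2$, and the corresponding $u_k^\lambda\to u^\lambda$ epi-convergently (hence, being finite-valued, locally uniformly) for each fixed $\lambda>0$. Choosing $y_k$ as the unique proximal point of $u_k^*+\tfrac{\lambda_k}{2}|\cdot|^2$ at $y$ for a sequence $\lambda_k\to 0^+$ that decays slowly enough then produces the recovery sequence.

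The principal obstacle is the bookkeeping in this diagonal argument, namely guaranteeing both that $y_k\to y$ (rather than having $y$ only as a cluster point) and that the overshoot $u_k^*(y_k)-u^*(y)$ vanishes in the limit. A tidier alternative, better aligned with the geometric spirit of the paper, is to identify epi-convergence on $\fconvx$ with Painlev\'e--Kuratowski convergence of epigraphs in $\R^{n+1}$ and to invoke that polarity of closed convex sets is a homeomorphism in this topology; the Legendre--Fenchel transform encodes precisely this polarity on epigraphs (after a coordinate reversal), so the continuity of $u\mapsto u^*$ follows directly.
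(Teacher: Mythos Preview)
The paper does not prove this lemma at all: it is stated as a known fact (this is Wijsman's classical theorem, cf.\ \cite[Theorem~11.34]{RockafellarWets}), so there is no ``paper's own proof'' to compare with. What I can assess is whether your plan constitutes a genuine proof.

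Your reduction to one direction via biconjugation is fine, and the liminf argument using Fenchel--Young is correct and standard. The gap is entirely in clause~(ii), the recovery sequence. Your Moreau--Yosida route is only a sketch: you assert that $u_k^\lambda\to u^\lambda$ epi-convergently for each fixed $\lambda>0$, but that statement is essentially equivalent to what you are trying to prove (continuity of conjugation, or of infimal convolution with a fixed convex function, under epi-convergence), so invoking it is circular unless you supply an independent argument. Even granting it, the diagonal extraction you describe (``choose $\lambda_k\to 0^+$ slowly enough'') is not made precise, and you yourself flag the bookkeeping as the ``principal obstacle''; that is exactly where the content of Wijsman's theorem lies.

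Your alternative via polarity of epigraphs is appealing but imprecise as stated. The Legendre transform is not literally polarity of $\epi u$ in $\R^{n+1}$; rather $u^*(y)=h_{\epi u}(y,-1)$, a slice of the support function of an unbounded closed convex set. Painlev\'e--Kuratowski convergence of closed convex sets does imply convergence of support functions, but only at interior points of the limit domain, and the domain of $h_{\epi u}$ is a half-space whose boundary contains the relevant slice $\{(y,-1)\}$ exactly when $u^*(y)=+\infty$. Turning this into a clean proof requires the machinery of cosmic closures or horizon cones (as in Rockafellar--Wets), which is not ``direct''. In short: neither sketch, as written, closes the recovery-sequence gap; for a self-contained argument you would do better to follow the proof in \cite[Theorem~11.34]{RockafellarWets}, which goes through the characterization of epi-convergence via infima over compact and open sets.
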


\noindent
Since $\fconvs\cap C_+^2(\R^n)$ is dense in $\fconvf$, this implies the following simple result.

\begin{lemma}
\label{le:C2p_dense}
For every $u\in\fconvs$, there exists a sequence of functions from $\fconvs\cap C_+^2(\R^n)$ that epi-converges to $u$.
\end{lemma}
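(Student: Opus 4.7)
The plan is to pass through the Legendre--Fenchel transform. The pair of facts stated just before Lemma~\ref{wijsman} shows that conjugation is a bijection between $\fconvs$ and $\fconvf$ which preserves the subclass $\fconvs\cap C_+^2(\R^n)$, and Lemma~\ref{wijsman} shows that epi-convergence is preserved under conjugation. Hence it suffices to produce, for the finite convex function $u^*\in\fconvf$, an approximating sequence $v_k\in\fconvs\cap C_+^2(\R^n)$ that epi-converges to $u^*$; then $v_k^*$ lies in $\fconvs\cap C_+^2(\R^n)$ and epi-converges to $u^{**}=u$ in $\fconvs$, as required.

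To construct such an approximation of a given $v\in\fconvf$, I would set
\begin{equation*}
v_\varepsilon(x):=(v*\rho_\varepsilon)(x)+\varepsilon|x|^2,
\end{equation*}
where $\rho_\varepsilon$ is a smooth, non-negative, radially symmetric mollifier of total mass $1$ with compact support shrinking to $\{0\}$ as $\varepsilon\to 0^+$. Since $v$ is finite-valued and convex, hence continuous, the convolution $v*\rho_\varepsilon$ is finite, $C^\infty$, and convex (convexity is preserved because $\rho_\varepsilon\geq 0$). The quadratic correction then contributes $2\varepsilon I$ to the Hessian, so $\Hess v_\varepsilon$ is positive definite everywhere and $v_\varepsilon\in C_+^2(\R^n)$.

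For super-coercivity, I would use that every $v\in\fconvf$ admits an affine minorant $v(x)\geq a+\langle b,x\rangle$ (pick any subgradient); combined with the radial symmetry of $\rho_\varepsilon$, this gives $(v*\rho_\varepsilon)(x)\geq a+\langle b,x\rangle$, and therefore $v_\varepsilon(x)\geq a+\langle b,x\rangle+\varepsilon|x|^2$, which forces $v_\varepsilon(x)/|x|\to+\infty$. Thus $v_\varepsilon\in\fconvs\cap C_+^2(\R^n)$. Standard properties of mollification yield $v*\rho_\varepsilon\to v$ locally uniformly as $\varepsilon\to 0^+$, and the term $\varepsilon|x|^2$ vanishes locally uniformly; hence $v_\varepsilon\to v$ locally uniformly, which for functions in $\fconvf$ is equivalent to epi-convergence.

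Applying this construction to $v=u^*$ and taking any sequence $\varepsilon_k\to 0^+$, Lemma~\ref{wijsman} then transfers the epi-convergence back to $\fconvs$: $v_{\varepsilon_k}^*\in\fconvs\cap C_+^2(\R^n)$ and $v_{\varepsilon_k}^*\to u^{**}=u$ in the epi-topology. The only step that requires any real care is the verification that $v_\varepsilon$ is super-coercive, i.e.\ lies in $\fconvs$ rather than merely in $\fconvf$; this is exactly the place where the affine minorant of $v$ is used.
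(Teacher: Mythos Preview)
Your argument is correct and follows exactly the route the paper indicates: the paper deduces the lemma from the stated density of $\fconvs\cap C_+^2(\R^n)$ in $\fconvf$ together with Lemma~\ref{wijsman}, and you supply a standard explicit construction (mollification plus a strictly convex quadratic perturbation) for that density. Your verification that $v_\varepsilon$ is super-coercive via an affine minorant is the only nontrivial point, and it is handled correctly.
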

\goodbreak

For a convex body $K\in\cK^n$, let
$$
\ind_K(x):=\begin{cases}
0\quad &\text{if } x\in K,\\
+\infty\quad &\text{if } x\not\in K
\end{cases}
$$
be its \emph{(convex) indicator function}. Clearly, $\ind_K\in \fconvs$ while $\ind_K^*=h_K$ and $h_K\in\fconvf$.

\goodbreak
For $u\in\fconvx$, the \emph{subdifferential} of $u$ at $x\in\R^n$ is defined by
$$\partial u(x) := \{y\in\R^n\colon u(z)\geq u(x)+\langle y, z-x\rangle \text{ for } z\in\R^n \}.$$
Every element of $\partial u(x)$ is called a \emph{subgradient} of $u$ at $x$. If $u$ is differentiable at $x$, then $\partial u(x)=\{\nabla u(x)\}$. For $x,y\in\R^n$, we have $y\in\partial u(x)$ if and only if $x\in\partial u^*(y)$.

For functions $u_1,u_2\in\fconvs$, we denote by $u_1\infconv u_2\in\fconvs$ their \emph{infimal convolution} which is defined as
$$(u_1\infconv u_2)(x):=\inf\nolimits_{x_1+x_2=x} u_1(x_1)+u_2(x_2)$$
for $x\in\R^n$. Note that
$$\epi (u_1\infconv u_2)=\epi u_1 + \epi u_2,$$
where the addition on the right side is the Minkowski addition of subsets in $\R^{n+1}$. Further, we define \emph{epi-multiplication} on $\fconvs$ in the following way. For $\lambda>0$ and $u\in\fconvs$, let
$$\lambda\sq u(x):=\lambda\, u\left( \frac x\lambda \right)$$
for $x\in\R^n$. This corresponds to rescaling the epi-graph of $u$ by the factor $\lambda$, that is, $\epi \lambda\sq u=\lambda \epi u$.

The two operations above can also be described using convex conjugates. For $u_1,u_2\in\fconvs$, we have
$$(u_1\infconv u_2)^* = u_1^* + u_2^*,$$
where the addition on the right side is the pointwise addition of functions. Similarly,
$$(\lambda\sq u)^* = \lambda\,u^*$$
for every $u\in\fconvs$ and $\lambda>0$.

\subsection{Hessian Measures}
We will use two families of Hessian measures of convex functions. For a more detailed presentation, see \cite{ColesantiHug2000,Colesanti-Ludwig-Mussnig-3}.
We remark that Hessian measures were introduced by Trudinger and Wang \cite{TrudingerWang1997, Trudinger:Wang1999} in the context of so-called Hessian equations.

For $u\in\fconvs$, we use the non-negative Borel measures $\Psi^n_j(u,\cdot)$ for $0\leq j\leq n$ that have the  property that for every Borel function $\beta:\R^n\to [0,\infty)$,
\begin{equation*}
\int_{\R^n} \beta(y) \d\Psi^n_j(u,y)=\int_{\R^n} \beta(\nabla u(x)) \big[\Hess u(x)\big]_{n-j}\d x
\end{equation*}
for $u\in\fconvs\cap C_+^2(\R^n)$. In addition, 
\begin{equation}\label{n-gradient}
\int_{\R^n} \beta(y) \d\Psi^n_n(u,y)= \int_{\dom u} \beta(\nabla u(x))\d x
\end{equation}
for $u\in\fconvs$ and $\beta \in C_c(\R^n)$. For $v\in\fconvf$, we use the non-negative Borel measures $\Phi^n_j(v,\cdot)$ for $0\leq j \leq n$ that have the  property that for every Borel function $\beta:\R^n\to [0,\infty)$,
\begin{equation*}
\int_{\R^n} \beta(x) \d\Phi^n_j(v,x)=\int_{\R^n} \beta(x) \big[\Hess v(x)\big]_{j}\d x
\end{equation*}
for $v\in\fconvf\cap C_+^2(\R^n)$. The measure $\Phi_n^n(v,\cdot)$ is called the Monge--Amp\`ere measure of $v$.

The interplay of Hessian measures and convex conjugation is well understood. Let $u\in\fconvs$ and $0\leq j \leq n$. It is an immediate consequence of \cite[Theorem 8.2]{Colesanti-Ludwig-Mussnig-3} that 
\begin{equation}
\label{eq:int_u_psi_int_v_phi}
\int_{B} \beta(y) \d\Psi_j^n(u,y) = \int_{B} \beta(x) \d\Phi_j^n(u^*,x)
\end{equation}
for every $u\in\fconvs$ and  Borel subset $B\subseteq \R^n$, when $\beta:\R^n\backslash\{0\}\to\R$ is such that one of the two integrals above, and therefore both, exist.

\subsection{Valuations on Convex Functions}
\label{se:vals_on_convex_fcts}

We say that $\oZ:\fconvs\to\R$ is \emph{epi-homo\-geneous} of degree~$j$ if $\oZ(\lambda\sq u)=\lambda^j\,\oZ(u)$ for every $u\in\fconvs$ and $\lambda>0$.

The following result is an immediate consequence of \cite[Proposition 20]{Colesanti-Ludwig-Mussnig-4}. 

\begin{proposition}
\label{prop:n-hom}
For $\zeta\in C_c([0,\infty))$, the functional $\oZ:\fconvs \to \R$, defined by
$$\oZ(u):=\int_{\dom (u)} \zeta(|\nabla u(x)|) \d x,$$
is a continuous, epi-translation and $\O(n)$
invariant valuation that is epi-homogeneous of degree $n$.
\end{proposition}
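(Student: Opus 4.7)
The plan is to rewrite $\oZ(u)$ as the integral of a fixed continuous, compactly supported radial function against the Hessian measure $\Psi_n^n(u,\cdot)$ and then to transfer each required property from $\Psi_n^n$ to $\oZ$.

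Since $\zeta\in C_c([0,\infty))$, the function $\beta:\R^n\to\R$ defined by $\beta(y):=\zeta(|y|)$ lies in $C_c(\R^n)$, so formula (\ref{n-gradient}) yields
\begin{equation*}
\oZ(u)=\int_{\R^n}\zeta(|y|)\,\d\Psi_n^n(u,y)
\end{equation*}
for every $u\in\fconvs$. The integral is finite because $\Psi_n^n(u,\cdot)$ is a locally finite Borel measure and $\beta$ has compact support; equivalently, for every $R>0$ the set $\{x\in\dom u:|\nabla u(x)|\leq R\}$ is bounded, since $u^*\in\fconvf$ is locally Lipschitz and $\partial u^*(B_R)$ is therefore bounded.

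From this representation, the valuation property and continuity are inherited from the corresponding properties of the Hessian measure: the map $u\mapsto\Psi_n^n(u,\cdot)$ is a measure-valued valuation on $\fconvs$, and if $u_k\to u$ epi-converges in $\fconvs$, then $\Psi_n^n(u_k,\cdot)\to\Psi_n^n(u,\cdot)$ weakly on $\R^n$. Testing against the fixed $\beta\in C_c(\R^n)$ immediately yields the valuation identity for $\oZ$ and the convergence $\oZ(u_k)\to\oZ(u)$. These measure-theoretic properties of $\Psi_n^n$ are exactly the substance of \cite[Proposition~20]{Colesanti-Ludwig-Mussnig-4}.

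The remaining invariances and the homogeneity are direct changes of variables in the defining integral. For a translation $\tau:x\mapsto x+x_0$ and $\alpha\in\R$ one has $\dom(u\circ\tau^{-1}+\alpha)=\dom u+x_0$ and $\nabla(u\circ\tau^{-1}+\alpha)(x)=\nabla u(x-x_0)$, so the substitution $y=x-x_0$ gives epi-translation invariance. For $\vartheta\in\O(n)$, the identities $\dom(u\circ\vartheta^{-1})=\vartheta\dom u$ and $\nabla(u\circ\vartheta^{-1})(x)=\vartheta\nabla u(\vartheta^{-1}x)$ combined with $|\vartheta y|=|y|$ and the unit Jacobian of $\vartheta$ yield $\O(n)$-invariance. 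For $\lambda>0$, $\dom(\lambda\sq u)=\lambda\dom u$ and $\nabla(\lambda\sq u)(x)=\nabla u(x/\lambda)$, so the substitution $y=x/\lambda$ contributes the Jacobian factor $\lambda^n$ and establishes epi-homogeneity of degree $n$. The main obstacle is the weak continuity and valuation property of $\Psi_n^n$ beyond the smooth class $\fconvs\cap C_+^2(\R^n)$ in which $\Psi_n^n$ is defined by an explicit integral; granted this (the content of \cite{Colesanti-Ludwig-Mussnig-4}), the rest of the proof is mechanical.
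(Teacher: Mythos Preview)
Your proposal is correct and aligns with the paper's treatment: the paper gives no proof at all but simply records that the statement ``is an immediate consequence of \cite[Proposition~20]{Colesanti-Ludwig-Mussnig-4}.'' You unpack this citation by first passing through the Hessian-measure representation \eqref{n-gradient} and then invoking the valuation and weak-continuity properties of $\Psi_n^n$, while verifying the invariances and epi-homogeneity by elementary change of variables; this is exactly the mechanism behind the cited result, so there is no substantive difference in approach.

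One small caveat: your sentence ``These measure-theoretic properties of $\Psi_n^n$ are exactly the substance of \cite[Proposition~20]{Colesanti-Ludwig-Mussnig-4}'' slightly mislocates the citation. Proposition~20 there is already a statement about the functional $u\mapsto\int_{\dom u}\zeta(|\nabla u|)\,\d x$ itself (continuity, valuation, invariance, homogeneity), not about the measure $\Psi_n^n$; the weak continuity and valuation properties of the Hessian measures are established elsewhere in \cite{Colesanti-Ludwig-Mussnig-3,Colesanti-Ludwig-Mussnig-4}. This does not affect correctness, only the precision of the attribution.
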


Next, we consider  valuations on $\fconvf$. 
For $X\subseteq\fconvx$, we associate with a valuation $\oZ:X\to\R$ its \emph{dual valuation} $\oZ^*$ defined on $X^*:=\{u^*: u\in X\}$ by setting
$$\oZ^*(u):=\oZ(u^*).$$
It was shown in \cite{Colesanti-Ludwig-Mussnig-3} that $\oZ:X\to\R$ is a continuous valuation if and only if
$\oZ^*:X^*\to\R$ is a continuous valuation. Since $u\in\fconvs$ if and only if $u^*\in\fconvf$, this allows us to transfer results between $\fconvs$ and $\fconvf$. 
We call a valuation $\oZ:\fconvf\to\R$ \emph{dually epi-translation invariant} if 
$\oZ^*$ is  epi-translation invariant or equivalently if
$$\oZ(v+\ell+\alpha)=\oZ(v)$$
\goodbreak\noindent
for every $v\in\fconvf$, every linear functional $\ell:\R^n\to\R$ and every $\alpha\in\R$. We say that $\oZ$ is \emph{homogeneous} of degree $j$ if 
$\oZ^*$ is epi-homogeneous of degree $j$ or equivalently if

$$\oZ(\lambda \, v) = \lambda^j \, \oZ(v)$$
for every $v\in\fconvf$ and  $\lambda>0$.
\goodbreak

\section{Cauchy--Kubota Formulas}
\label{se:ck_direct}
In this section, we give a new proof of Theorem~\ref{thm:existence_singular_hessian_vals} and establish the Cauchy--Kubota formulas from Theorem~\ref{thm:cauchy_kubota_for_hessian_vals}. 
In the proofs, we require results on projection functions that we prove in the first part. 
Then we introduce and discuss
the integral transform $\cR$ that connects the coefficient functions in our two versions of the Hadwiger theorem on convex functions.
Finally, we establish Cauchy--Kubota formulas first for smooth functions and then in the general case.

\subsection{Projection Functions}
\label{se:proj_fcts}
For a linear subspace $E\subseteq \R^n$ and a function $u\in\fconvs$, we define the \emph{projection function} $\proj_E u\colon E\to \R$ by
$$\proj_E u(x_E):= \min\nolimits_{z\in E^\perp} u(x_E+z),$$
where $x_E\in E$ and  $E^\perp$ is the orthogonal complement of $E$. Note that this minimum is attained since $u$ is lower semicontinuous and super-coercive. Since $\min_{z\in E^\perp} u(x_E+z) \leq t$ if and only if there exists $z\in E^\perp$ such that $u(x_E+z)\leq t$, this implies that
\begin{equation}
\label{eq:proj_fct_lvl_set}
\{\proj_E u \leq t\} = \proj_E \{u\leq t\}
\end{equation}
for every $t\in\R$ and 
\begin{equation}
\label{eq:proj_fct_epi_grph}
\epi \proj_E u = \proj_{E\times \R} \epi u.
\end{equation}
In particular, it is clear that $\proj_E u \in \fconvsE$.

\begin{lemma}
\label{le:proj_subgradient}
Let $E\subseteq \R^n$ be a linear subspace and let $u\in\fconvs$. If $x_E,y_E\in E$ are such that $y_E\in\partial \proj_E u(x_E)$, then for every $x\in\R^n$ with $\proj_E x = x_E$ and $\proj_E u(x_E)=u(x)$ also $y_E\in\partial u(x)$. In particular, such $x\in\R^n$ exist.
\end{lemma}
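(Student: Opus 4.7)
The plan is to handle the two claims separately, starting with the (easier) existence assertion and then deriving the subgradient inclusion by a short chain of inequalities.

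First I would argue existence of the minimizer $x$. Fix $x_E\in E$ and consider the function $z\mapsto u(x_E+z)$ on the affine subspace $x_E+E^\perp$. Since $u\in\fconvs$ is lower semicontinuous and super-coercive, any sublevel set $\{u\le t\}$ is compact, hence so is its intersection with $x_E+E^\perp$. A standard compactness argument then shows that the infimum $\min_{z\in E^\perp}u(x_E+z)$ is attained, i.e.\ there exists $x\in\R^n$ with $\proj_E x=x_E$ and $u(x)=\proj_E u(x_E)$.

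Next, to prove the subgradient inclusion, let $z\in\R^n$ be arbitrary, and write $z_E:=\proj_E z$. By the definition of the projection function,
\begin{equation*}
u(z)\ge \proj_E u(z_E).
\end{equation*}
Using the assumption $y_E\in\partial\proj_E u(x_E)$ applied at the point $z_E\in E$, we then obtain
\begin{equation*}
\proj_E u(z_E)\ge \proj_E u(x_E)+\langle y_E,z_E-x_E\rangle.
\end{equation*}
Finally, since $y_E\in E$, we may replace $z_E$ and $x_E$ by $z$ and $x$ in the inner product, that is $\langle y_E,z_E-x_E\rangle=\langle y_E,z-x\rangle$, and we use $\proj_E u(x_E)=u(x)$ to arrive at
\begin{equation*}
u(z)\ge u(x)+\langle y_E,z-x\rangle,
\end{equation*}
which is exactly the statement $y_E\in\partial u(x)$.

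I do not anticipate any real obstacle: the existence part is a direct consequence of super-coercivity plus lower semicontinuity, and the subgradient part is a one-line chain once one notices that $y_E\in E$ makes $\langle y_E,z-x\rangle=\langle y_E,\proj_E z-\proj_E x\rangle$, so that the defining inequality for $\partial\proj_E u(x_E)$ can be promoted to the defining inequality for $\partial u(x)$ without any loss.
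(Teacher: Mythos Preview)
Your proof is correct and follows essentially the same route as the paper: the existence of $x$ is attributed to lower semicontinuity and super-coercivity (which the paper records just before the lemma), and the subgradient inclusion is obtained via the same chain $u(z)\ge \proj_E u(\proj_E z)\ge \proj_E u(x_E)+\langle y_E,\proj_E z-\proj_E x\rangle=u(x)+\langle y_E,z-x\rangle$. The only implicit point worth noting is that the hypothesis $y_E\in\partial\proj_E u(x_E)$ forces $\proj_E u(x_E)<+\infty$, so the relevant sublevel set in your compactness argument is indeed nonempty.
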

\begin{proof}
Let $x_E,y_E$ be given with $y_E\in\partial \proj_E u(x_E)$. By the definition of the projection function, there exists $x\in\R^n$ with $\proj_E x=x_E$ such that
$$\proj_E u(x_E)=\min_{z\in E^\perp} u(x_E+z) =u(x).$$
Since $y_E\in\partial \proj_E u(x_E)$, we have
$$\proj_E u(z_E) \geq \proj_E u(x_E) + \langle z_E-x_E,y_E\rangle$$
for every $z_E\in E$. Thus, using again the definition of the projection function as well as the fact that $\langle w,y_E\rangle = \langle \proj_E w,y_E\rangle$ for every $w\in\R^n$, we obtain
$$u(z) \geq \proj_E u(\proj_E z) \geq u(x) + \langle z-x,y_E\rangle
$$
for every $z\in\R^n$, which shows that $y_E\in\partial u(x)$.
\end{proof}

\goodbreak
Since for every linear subspace $E\subseteq \R^n$ the map $K\mapsto \proj_E K$ is continuous on $\cK^n$, we directly obtain the following result from \eqref{eq:proj_fct_lvl_set} and Lemma~\ref{le:hd_conv_lvl_sets}.

\begin{lemma}
\label{le:projection_function_continuous}
For every linear subspace $E\subseteq \R^n$, the map $\proj_E:\fconvs\to\fconvsE$ is continuous.
\end{lemma}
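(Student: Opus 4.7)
The plan is to reduce the continuity claim to the level of sublevel sets, where epi-convergence becomes Hausdorff convergence (by Lemma \ref{le:hd_conv_lvl_sets}) and $\proj_E$ acts as the ordinary continuous projection on convex bodies (by \eqref{eq:proj_fct_lvl_set}).

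Concretely, suppose $u_k$ epi-converges to $u$ in $\fconvs$. First I would apply the forward direction of Lemma \ref{le:hd_conv_lvl_sets} to obtain Hausdorff convergence $\{u_k\le t\}\to\{u\le t\}$ for every $t\ne \min_{\R^n} u$, with the convention of the lemma that the sublevel sets are eventually empty when the limiting set is. Since $K\mapsto \proj_E K$ is continuous on $\cK^n$ in the Hausdorff metric (and trivially sends $\emptyset$ to $\emptyset$), and since \eqref{eq:proj_fct_lvl_set} identifies $\proj_E\{u_k\le t\}$ with $\{\proj_E u_k\le t\}$, this yields
$$\{\proj_E u_k\le t\}\to\{\proj_E u\le t\}$$
in the Hausdorff metric on $E$, for every such $t$. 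Invoking the converse direction of Lemma \ref{le:hd_conv_lvl_sets} inside $\fconvsE$ then gives epi-convergence $\proj_E u_k\to\proj_E u$, which is the desired continuity statement.

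To apply the converse at \emph{every} threshold $s\in\R$ one needs to treat the exceptional value $s=\min_E\proj_E u$ separately. Here I would use that super-coercivity of $u$ forces $\min u$ to be attained at some $x^\ast$, so $\proj_E u(\proj_E x^\ast)=u(x^\ast)$ and hence $\min_E\proj_E u=\min_{\R^n} u$; the exceptional thresholds of $u$ and $\proj_E u$ therefore coincide. For $s=\min u$ a short diagonal argument choosing $s_k\downarrow s$ with $s_k>\min u$, combined with the convergence already established at each non-exceptional $s_k$, produces a sequence $s_k\to s$ with $\{\proj_E u_k\le s_k\}\to\{\proj_E u\le s\}$ as required.

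The one (mild) obstacle is precisely this handling of the exceptional threshold; everything else is a direct transcription between convex functions and their sublevel sets via Lemma \ref{le:hd_conv_lvl_sets} and \eqref{eq:proj_fct_lvl_set}, together with the familiar Hausdorff-continuity of orthogonal projection on $\cK^n$. The fact that $\proj_E u\in\fconvsE$ to begin with was already recorded in the discussion preceding the lemma, so no additional verification is needed.
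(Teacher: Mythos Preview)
Your argument is correct and follows exactly the route the paper indicates: the paper's proof is the single sentence ``Since for every linear subspace $E\subseteq\R^n$ the map $K\mapsto\proj_E K$ is continuous on $\cK^n$, we directly obtain the following result from \eqref{eq:proj_fct_lvl_set} and Lemma~\ref{le:hd_conv_lvl_sets},'' and you have simply unpacked this, including the handling of the exceptional threshold $t=\min u$ (and the observation $\min_E\proj_E u=\min_{\R^n}u$), which the paper leaves implicit.
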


We also need the next result.
\begin{lemma}
\label{le:joint_cont_son_fconvs}
The map
$$(\vartheta,u)\mapsto u\circ \vartheta^{-1}$$
is jointly continuous on $\SO(n)\times \fconvs$.
\end{lemma}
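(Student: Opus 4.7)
The plan is to verify epi-convergence of the composite directly from the pointwise definition of epi-convergence, by transporting the recovery and $\liminf$ conditions for $u_k \to u$ through the continuously varying isometry $\vartheta_k$. Given $\vartheta_k \to \vartheta$ in $\SO(n)$ and $u_k$ epi-converging to $u$ in $\fconvs$, I would first note that $u \circ \vartheta^{-1} \in \fconvs$, since composition with an orthogonal transformation preserves properness, convexity, lower semicontinuity, and super-coercivity. The essential continuity facts used are that inversion on $\SO(n)$ is continuous, so $\vartheta_k^{-1} \to \vartheta^{-1}$, and that the evaluation map $(\vartheta, x) \mapsto \vartheta x$ on $\SO(n) \times \R^n$ is jointly continuous.

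For the $\liminf$ inequality, given $y \in \R^n$ and $y_k \to y$, I would set $x_k := \vartheta_k^{-1} y_k$, so that $x_k \to \vartheta^{-1} y$. Epi-convergence of $u_k$ to $u$ then yields
\[
\liminf_{k \to \infty} u_k \circ \vartheta_k^{-1}(y_k) = \liminf_{k \to \infty} u_k(x_k) \geq u(\vartheta^{-1} y) = u \circ \vartheta^{-1}(y).
\]
For the recovery sequence condition, given $y \in \R^n$, I would set $x := \vartheta^{-1} y$ and invoke epi-convergence of $u_k$ to $u$ to obtain a sequence $x_k \to x$ with $u_k(x_k) \to u(x)$. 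Then $y_k := \vartheta_k x_k \to \vartheta x = y$, and
\[
u_k \circ \vartheta_k^{-1}(y_k) = u_k(x_k) \to u(x) = u \circ \vartheta^{-1}(y),
\]
as required.

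I do not anticipate any serious obstacle: the argument is a transparent transport of the epi-convergence definition along a continuous family of isometries, and both conditions follow from the same substitution. An alternative route would be through Lemma~\ref{le:hd_conv_lvl_sets} combined with the standard joint continuity of the rotation action on $\cK^n$ in the Hausdorff metric; this is also workable, but slightly more delicate at the exceptional level $t = \min_{x \in \R^n} u(x)$, so the direct approach via the definition seems preferable.
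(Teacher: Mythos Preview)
Your proof is correct but follows a different route from the paper. The paper passes to convex conjugates via Lemma~\ref{wijsman}: setting $v_l:=u_l^*$ and $\bar v:=\bar u^*$, epi-convergence on $\fconvf$ coincides with locally uniform convergence, and from this one reads off $v_l(\vartheta_l^t y)\to \bar v(y)$ pointwise, hence epi-convergence of $v_l\circ\vartheta_l^t$, and then transfers back through conjugation. Your argument instead verifies the two defining conditions of epi-convergence directly, transporting sequences through the continuously varying isometries $\vartheta_k^{-1}$. Your approach is more elementary and self-contained, requiring neither Lemma~\ref{wijsman} nor the equivalence of epi-convergence and locally uniform convergence on $\fconvf$; the paper's approach, on the other hand, is a nice illustration of how the duality between $\fconvs$ and $\fconvf$ can be exploited, and fits the broader methodology of the article. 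Either is entirely adequate here.
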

\begin{proof}
Let $u_l$ be a sequence of functions in $\fconvs$ that epi-converges to some $\bar{u}\in\fconvs$. Furthermore, let $\vartheta_l$ be a convergent sequence in $\SO(n)$ and without loss of generality we may assume that $\vartheta_l x \to x$ for every $x\in\R^n$ as $l\to\infty$. We need to show that $u_l\circ \vartheta_l^{-1}$ epi-converges to $\bar{u}$. This is equivalent, by Lemma~\ref{wijsman}, to the epi-convergence of the corresponding sequence of convex conjugates in $\fconvf$, which on $\fconvf$ is equivalent to pointwise convergence and to uniform convergence on compact sets. Let $v_l, \bar{v}\in\fconvf$ be defined as $v_l:=u_l^*$ for $l\in\N$ and $\bar{v}:=\bar{u}^*$. Since $v_l$ is uniformly convergent to $\bar{v}$ on compact sets, for every $y\in\R^n$, 
\begin{equation*}
\lim_{l\to\infty} v_l(\vartheta_l^t y) = \bar{v}(y),
\end{equation*}
where $\vartheta_l^t$ denotes the transpose of $\vartheta_l$. Thus, $v_l\circ\vartheta_l^t$ is epi-convergent to $\bar v$, and by Lemma \ref{wijsman}, we obtain that $u_{l}\circ \vartheta_l^{-1}$ is epi-convergent to $\bar{u}$.
\end{proof}

Let $1\leq k \leq n-1$ and $E\in\Grass{k}{n}$. There exists a rotation $\vartheta\in \SO(n)$ such that $\{\vartheta x \colon x\in E\}=\R^k$, where we consider both $E$ and $\R^k$ as subspaces of $\R^n$ (note that $\vartheta$ is not unique). Now, for every $u\in\fconvsE$ we have $u\circ \vartheta^{-1}\in\fconvsk$. Note that the restriction of $\vartheta\in\SO(n)$ to $\R^k$ is an element of $\O(k)$ but not necessarily of $\SO(k)$. For an  $\O(k)$ invariant $\oZ:\fconvsk\to\R$, set
$$\oZ(u):=\oZ(u\circ \vartheta^{-1})$$
for $u\in\fconvsE$. Since $\oZ$ is $\O(k)$ invariant, this definition does not depend on the particular choice of $\vartheta\in\SO(n)$ and $\oZ$ is well-defined on $\fconvsE$. 

For $1\leq k\leq n-1$, define the distance of two linear subspaces $E,F\in\Grass{k}{n}$
as the Hausdorff distance of the convex bodies $\Bn\cap E$ and $\Bn\cap F$. This induces a topology on the Grassmannian $\Grass{k}{n}$, which is used in the proof of the following statement.

\begin{lemma}
\label{le:val_int_grass}
Let $1\leq k\leq n-1$. If $\,\oZ\colon\fconvsk\to\R$ is a continuous, epi-translation and $\O(k)$ invariant valuation, then
\begin{equation}
\label{eq:val_int_grass}
u\mapsto \int_{\Grass{k}{n}} \oZ(\proj_E u)\d E
\end{equation}
defines a continuous, epi-translation and $\O(n)$ invariant valuation on $\fconvs$.
\end{lemma}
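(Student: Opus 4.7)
The plan is to verify in turn that the expression in \eqref{eq:val_int_grass} is well-defined, is a valuation, is continuous, and is epi-translation and $\O(n)$ invariant.

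First I would establish joint continuity of $(E,u)\mapsto \oZ(\proj_E u)$ on $\Grass{k}{n}\times \fconvs$. Given $E\in\Grass{k}{n}$ and a rotation $\vartheta_E\in\SO(n)$ with $\vartheta_E E=\R^k$, a short computation using $\vartheta_E^{-1}(\R^k)^\perp=E^\perp$ gives $\proj_{\R^k}(u\circ\vartheta_E^{-1})=(\proj_E u)\circ\vartheta_E^{-1}$, so by the $\O(k)$-invariance of $\oZ$ the integrand equals $\oZ(\proj_{\R^k}(u\circ\vartheta_E^{-1}))$. This expression is continuous in $(\vartheta_E,u)$ by Lemmas~\ref{le:joint_cont_son_fconvs} and~\ref{le:projection_function_continuous}. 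A subsequence argument using the compactness of $\SO(n)$---lifting $E_m\to E$ to a convergent subsequence $\vartheta_m\to\vartheta$ with $\vartheta_m E_m=\R^k=\vartheta E$---then gives joint continuity. Since $\Grass{k}{n}$ is compact, \eqref{eq:val_int_grass} is well-defined, and dominated convergence applied to convergent sequences (whose union with the limit is compact in $\fconvs$) yields continuity of the integrated functional.

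For the valuation property, let $u,v\in\fconvs$ with $u\vee v, u\wedge v\in\fconvs$. The crucial identities are
\begin{equation*}
\proj_E(u\wedge v)=\proj_E u\wedge \proj_E v\quad\text{and}\quad \proj_E(u\vee v)=\proj_E u\vee \proj_E v.
\end{equation*}
The first follows by interchanging infima in the definition of the projection function. The second reduces, via \eqref{eq:proj_fct_lvl_set}, to showing $\proj_E(\{u\leq t\}\cap\{v\leq t\})=\proj_E\{u\leq t\}\cap \proj_E\{v\leq t\}$ for every $t\in\R$. The inclusion $\subseteq$ is trivial; for $\supseteq$, pick $a\in\{u\leq t\}$ and $b\in\{v\leq t\}$ sharing the same $E$-projection $x_E$. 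Since $\{u\wedge v\leq t\}=\{u\leq t\}\cup\{v\leq t\}$ is convex, the segment $[a,b]$ lies in this union; decomposing $[0,1]$ into the closed preimages of the two sublevel sets, connectedness produces an intermediate point in $\{u\leq t\}\cap\{v\leq t\}$, and its $E$-projection is $x_E$ because the entire segment projects constantly to $x_E$. Applying the valuation property of $\oZ$ to each integrand and integrating then yields the valuation identity for the integrated functional.

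Finally, epi-translation invariance follows from the identity $\proj_E(u\circ\tau^{-1}+\alpha)=(\proj_E u)\circ\tau_E^{-1}+\alpha$, where $\tau_E$ is the translation on $E$ by the $E$-component of the translation vector, combined with the epi-translation invariance of $\oZ$. For $\vartheta\in\O(n)$, the identity $\proj_E(u\circ\vartheta^{-1})=(\proj_{\vartheta^{-1}E}u)\circ\vartheta^{-1}|_E$ (where $\vartheta^{-1}|_E\colon E\to\vartheta^{-1}E$ is an isometry) together with the $\O(k)$-invariant definition of $\oZ$ on subspace-supported functions and the $\O(n)$-invariance of the Haar measure on $\Grass{k}{n}$ yields $\O(n)$-invariance of the integral, hence in particular rotation invariance. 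I expect the main obstacle to be the identity $\proj_E(u\vee v)=\proj_E u\vee \proj_E v$ under the hypothesis $u\wedge v\in\fconvs$, since this is where the convexity of the union of sublevel sets is essentially used; the remaining steps are bookkeeping or standard compactness arguments.
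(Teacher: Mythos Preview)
Your proposal is correct and follows essentially the same route as the paper: establish joint continuity of $(E,u)\mapsto\oZ(\proj_E u)$ via the identity $(\proj_E u)\circ\vartheta^{-1}=\proj_{\vartheta E}(u\circ\vartheta^{-1})$ together with Lemmas~\ref{le:projection_function_continuous} and~\ref{le:joint_cont_son_fconvs}, then deduce continuity of the integral by compactness of $\Grass{k}{n}$ and dominated convergence, and finally use the identities $\proj_E(u\wedge v)=\proj_E u\wedge\proj_E v$ and $\proj_E(u\vee v)=\proj_E u\vee\proj_E v$ for the valuation property. The only notable differences are cosmetic: the paper constructs the rotations $\vartheta_l$ directly so that $\vartheta_l\to\mathrm{id}$ (avoiding your subsequence extraction), and it simply asserts the two projection identities, whereas you supply a clean connectedness argument for the nontrivial one---a welcome addition, since that identity genuinely relies on the hypothesis $u\wedge v\in\fconvs$.
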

\begin{proof}
We will first show that
\begin{equation}
\label{eq:e_u_oz}
(E,u)\mapsto \oZ(\proj_E u)
\end{equation}
is  jointly continuous  on $\Grass{k}{n}\times\fconvs$. For this, let $E_l$ be a convergent sequence in $\Grass{k}{n}$ with limit $\bar{E}\in\Grass{k}{n}$, and let $u_l$ be a sequence in $\fconvs$ that epi-converges to some $\bar{u}\in\fconvs$. We need to show that
\begin{equation}
\label{eq:conv_e_u}
\lim_{l\to\infty}\oZ(\proj_{E_l} u_l) = \oZ(\proj_{\bar{E}} \bar{u}).
\end{equation}

Since $E_l$ converges to $\bar{E}$, we may choose a sequence $\vartheta_l\in\SO(n)$ such that $\vartheta_l x \to x$ for every $x\in\R^n$ as $l\to\infty$ and such that $\{\vartheta_l x\colon x\in E_l\}=\bar{E}$ for every $l\in\N$. In particular, we now have
$$(\proj_{E_l} u_l)\circ \vartheta_l^{-1}\in\fconvsbE$$
for every $l\in\N$. By the $\O(k)$ invariance of $\oZ$, the definition of $w\mapsto \oZ(\proj_E w)$ on $\fconvs$ and our choice of $\vartheta_l$, it follows that
$$\oZ(\proj_{E_l} u_l) = \oZ((\proj_{E_l} u_l)\circ \vartheta_l^{-1}) = \oZ(\proj_{\bar{E}} (u_l \circ \vartheta_l^{-1}))$$
for every $l\in\N$. Combined with Lemma~\ref{le:projection_function_continuous} and Lemma~\ref{le:joint_cont_son_fconvs}, this implies \eqref{eq:conv_e_u}.

Next, let $u_l$ be again an epi-convergent sequence in $\fconvs$ with limit $\bar{u}\in\fconvs$. Since $u_l$ is epi-convergent, $\Grass{k}{n}$ is compact, and the map defined by \eqref{eq:e_u_oz} is continuous, the supremum
$$\sup\{\vert \oZ(\proj_E u_l) \vert : l\in\N, E\in\Grass{k}{n}\}$$
is finite. Hence, it follows from the dominated convergence theorem that
$$\lim_{l\to\infty} \int_{\Grass{k}{n}} \oZ(\proj_E u_l) \d E = \int_{\Grass{k}{n}} \oZ(\proj_E \bar{u}) \d E$$
and therefore \eqref{eq:val_int_grass} is continuous. In particular, the right side of \eqref{eq:val_int_grass} is well-defined and finite. In addition, it is easy to see that \eqref{eq:val_int_grass} is epi-translation and $\O(n)$ invariant. Finally, the valuation property follows from the corresponding property of $\oZ$ combined with the fact that
$$\proj_E (u\vee v) = (\proj_E u) \vee (\proj_E v),\quad\quad \proj_E(u\wedge v) = (\proj_E u) \wedge (\proj_E v)$$
for every $u,v\in\fconvs$ and $E\in \Grass{k}{n}$.
\end{proof}

\goodbreak

As a consequence of
Proposition~\ref{prop:n-hom} and Lemma~\ref{le:val_int_grass}, we obtain the following result.

\begin{lemma}
\label{le:cauchy_kubota_is_continuous_val}
For $0\leq j \leq n$ and $\alpha\in C_c([0,\infty))$, the functional
\begin{equation*}
u\mapsto \int_{\Grass{j}{n}} \int_{\dom (\proj_E u)} \alpha(|\nabla \proj_E u(x_E)|) \d x_E \d E
\end{equation*}
is a continuous, epi-translation and rotation invariant valuation on $\fconvs$.
\end{lemma}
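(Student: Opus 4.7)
My plan is to decompose the lemma into three cases: $j=n$, $j=0$, and $1 \leq j \leq n-1$, and in each case identify the functional as an instance of (or a consequence of) results already proved in the preceding material.

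For the main range $1 \leq j \leq n-1$, the natural strategy is to first apply Proposition~\ref{prop:n-hom} with dimension $k=j$ in place of $n$: this shows that the inner functional
$$w \mapsto \int_{\dom w} \alpha(|\nabla w(x_E)|) \d x_E$$
is a continuous, epi-translation and $\O(j)$ invariant valuation on $\fconvsj$. Then I would invoke Lemma~\ref{le:val_int_grass} with $k=j$ and this choice of $\oZ$; the conclusion of that lemma gives precisely that averaging over $\Grass{j}{n}$ yields a continuous, epi-translation and $\O(n)$ invariant valuation on $\fconvs$. Since $\O(n)$ invariance implies $\SO(n)$ invariance, the rotation invariance claim follows immediately.

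For the two boundary cases, the argument is shorter. When $j=n$, the Grassmannian $\Grass{n}{n}$ consists of the single subspace $\R^n$, on which $\proj_{\R^n} u = u$, so the functional reduces to $u \mapsto \int_{\dom u} \alpha(|\nabla u|) \d x$, which is continuous, epi-translation and $\O(n)$ invariant by Proposition~\ref{prop:n-hom} applied directly. When $j=0$, the Grassmannian is a single point $\{0\}$, the domain $\dom(\proj_{\{0\}} u) = \{0\}$, and by the convention stated after Theorem~\ref{cauchy_function} the inner integral evaluates to the constant $\alpha(0)$; a constant functional is trivially a continuous, epi-translation and rotation invariant valuation.

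I do not anticipate any serious obstacle: the lemma is essentially the composition of Proposition~\ref{prop:n-hom} (vertical, inside a fixed subspace) with Lemma~\ref{le:val_int_grass} (horizontal, across the Grassmannian). The only minor subtlety is to note that Proposition~\ref{prop:n-hom} as stated is formulated on $\fconvs$ for the ambient dimension $n$, but its proof (and statement) transfers verbatim when $n$ is replaced by any positive integer $k$, so it yields the required valuation on $\fconvsk$. Apart from this observation and the two trivial boundary cases, the proof is a one-line appeal to the two preceding results.
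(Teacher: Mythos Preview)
Your proposal is correct and follows essentially the same approach as the paper, which derives the lemma in one sentence as a consequence of Proposition~\ref{prop:n-hom} and Lemma~\ref{le:val_int_grass}. Your explicit treatment of the boundary cases $j=0$ and $j=n$ (which fall outside the hypothesis $1\le k\le n-1$ of Lemma~\ref{le:val_int_grass}) and your remark that Proposition~\ref{prop:n-hom} must be read with the ambient dimension replaced by $j$ are both sound clarifications that the paper leaves implicit.
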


\subsection{The Integral Transform $\cR$}
For $\zeta\in C_b((0,\infty))$ and $s>0$, define
\begin{equation*}
\cR \zeta(s):= s\, \zeta(s) + \int_s^{\infty}  \zeta(t)\d t.
\end{equation*}
Note that, under these assumptions, we have $\cR \zeta \in C_b((0,\infty))$. For $l\in \N$, let 
$$\cR^l \zeta:=\underbrace{(\cR \circ \cdots \circ \cR)}_{l} \zeta$$ 
and set $\cR^0 \zeta := \zeta$.

\begin{lemma}\label{Rtol}
If $\,l\ge 0$ and $\zeta\in C_b((0,\infty))$, then
$$\cR^l \zeta(s) = s^l \zeta(s) + l \int_s^{\infty} t^{l-1} \zeta(t) \d t$$
for $s>0$.
\end{lemma}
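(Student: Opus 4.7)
I would prove the lemma by straightforward induction on $l$, the only non-routine ingredient being a Fubini swap to collapse an iterated tail integral.

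\textbf{Base case.} For $l=0$ the right-hand side is $\zeta(s)$ (with the convention that the $l\int$ term vanishes when $l=0$), and the left-hand side is $\cR^0\zeta(s)=\zeta(s)$ by definition. For $l=1$, the identity is exactly the definition of $\cR\zeta$.

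\textbf{Inductive step.} Assume the formula for some $l\ge 1$. Applying $\cR$ to both sides, using linearity of $\cR$ and the inductive hypothesis, I would compute
\begin{equation*}
\cR^{l+1}\zeta(s)
= s\,\cR^l\zeta(s)+\int_s^\infty \cR^l\zeta(t)\d t
= s^{l+1}\zeta(s)+ l\,s\!\int_s^\infty t^{l-1}\zeta(t)\d t + \int_s^\infty t^{l}\zeta(t)\d t + l\!\int_s^\infty\!\!\int_t^\infty r^{l-1}\zeta(r)\d r\d t.
\end{equation*}
The last term is the only non-trivial piece. Since $\zeta\in C_b((0,\infty))$ has bounded support, Fubini applies with no integrability issues, and swapping the order of integration gives
\begin{equation*}
\int_s^\infty\!\int_t^\infty r^{l-1}\zeta(r)\d r\d t
= \int_s^\infty (r-s)\,r^{l-1}\zeta(r)\d r
= \int_s^\infty r^{l}\zeta(r)\d r - s\!\int_s^\infty r^{l-1}\zeta(r)\d r.
\end{equation*}
Substituting this back, the two $s\!\int_s^\infty t^{l-1}\zeta(t)\d t$ terms cancel, and the remaining $\int_s^\infty t^l\zeta(t)\d t$ terms combine with coefficient $1+l$. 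This yields exactly
\begin{equation*}
\cR^{l+1}\zeta(s) = s^{l+1}\zeta(s) + (l+1)\int_s^\infty t^l\zeta(t)\d t,
\end{equation*}
completing the induction.

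\textbf{Remarks on difficulties.} There is essentially no obstacle: the only point that requires a line of justification is the Fubini swap, which is immediate because $\zeta$ has compact support inside $(0,\infty)$, so each integrand is bounded and supported on a bounded rectangle. The formula could alternatively be recognized as Taylor's remainder formula of order $l$ applied to the function $F(s)=\int_s^\infty\zeta(t)\d t$ (noting that $\cR\zeta(s)=s\zeta(s)+F(s)$ and that $\cR$ iterates turn polynomial prefactors in a predictable way), but the direct induction above is the cleanest presentation.
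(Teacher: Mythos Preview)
Your proof is correct and follows essentially the same induction argument as the paper; the only cosmetic differences are that the paper writes the inductive step as $\cR^{l}\zeta=\cR^{l-1}(\cR\zeta)$ rather than $\cR^{l+1}\zeta=\cR(\cR^{l}\zeta)$, and collapses the iterated integral via integration by parts instead of Fubini. One tiny wording point: ``compact support inside $(0,\infty)$'' is slightly stronger than what $C_b((0,\infty))$ guarantees, but since $s>0$ is fixed you are integrating over $[s,M]$ where $\zeta$ is continuous and hence bounded, so the justification goes through unchanged.
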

\begin{proof}
We prove the statement by induction on $l$. Observe that the statement is trivially true for $l=0$ and $l=1$. Therefore, assume that $l>1$ and that the statement is true for the case $l-1$. Using the induction assumption, we now have
\begin{align}
\begin{split}
\label{eq:calc_cR_l}
\cR^l \zeta(s) &= \cR^{l-1} \cR \zeta(s)\\
&= s^l \zeta(s) + s^{l-1} \int_s^{\infty} \zeta(t) \d t + (l-1) \int_s^{\infty} t^{l-1} \zeta(t) \d t + (l-1) \int_s^{\infty} t^{l-2} \int_t^{\infty}  \zeta(r) \d r \d t
\end{split}
\end{align}
for every $s>0$. Using integration by parts and  that $\zeta$ has bounded support shows that
$$(l-1) \int_s^{\infty} t^{l-2} \int_t^{\infty} \zeta(r) \d r \d t = -s^{l-1} \int_s^{\infty} \zeta(t) \d t + \int_s^{\infty} t^{l-1} \zeta(t) \d t$$
for every $s>0$, which combined with \eqref{eq:calc_cR_l} completes the proof.
\end{proof}

\goodbreak
We require the following simple result.

\begin{lemma}
\label{le:lim_t_int_zeta_new}
    Let $0\leq k < n-1$. If $\zeta\in\Had{k}{n}$, then
    \begin{equation}
    \label{eq:lim_t_int_zeta_new_1}
        \lim_{s\to 0^+} s^{n-1-k} \int_s^{\infty} \zeta(t) \d t = 0.
    \end{equation}
    Moreover, if $\,0\leq k<n$ and $\rho\in\Had{k}{n-1}$, then
    $$\lim_{s\to 0^+} s^{n-k} \int_s^{\infty} \frac{\rho(t)}{t^{2}} \d t = 
	\begin{cases}\rho(0) \quad &\text{if } \,k=n-1,\\[2pt]0 \quad &\text{else.}\end{cases}$$
\end{lemma}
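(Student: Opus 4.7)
The plan is to deduce both displays directly from the pointwise decay estimates at $0$ built into the definitions; as it turns out, the integral-convergence clause of the Hadwiger classes is not needed here. Throughout, fix $M>0$ so that the function in question vanishes on $[M,\infty)$ and note that by continuity on $(0,\infty)$ it is bounded on every compact $[\delta,M]$.

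For \eqref{eq:lim_t_int_zeta_new_1}, the assumption $\zeta\in\Had{k}{n}$ yields $\lim_{s\to 0^+}s^{n-k}\zeta(s)=0$, so for each $\epsilon>0$ there is $\delta>0$ with $|\zeta(t)|\le\epsilon\, t^{-(n-k)}$ on $(0,\delta)$. Splitting $\int_s^{\infty}\zeta(t)\d t=\int_s^{\delta}\zeta(t)\d t+\int_{\delta}^{M}\zeta(t)\d t$ and using $n-k>1$ (this is where $k<n-1$ enters), the first piece is bounded in absolute value by $\tfrac{\epsilon}{n-k-1}\bigl(s^{-(n-k-1)}-\delta^{-(n-k-1)}\bigr)$. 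Multiplying by $s^{n-k-1}$ gives a quantity with $\limsup_{s\to 0^+}$ at most $\epsilon/(n-k-1)$, while the tail contributes $O(s^{n-k-1})\to 0$. Letting $\epsilon\to 0$ yields the claim.

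For the second display I split into cases. If $k=n-1$, then $\rho(0):=\lim_{s\to 0^+}\rho(s)$ exists and I decompose $\rho(t)=\rho(0)+(\rho(t)-\rho(0))$. The constant piece integrates to $\rho(0)\,s(1/s-1/M)\to\rho(0)$; continuity of $\rho$ at $0$ gives $|\rho(t)-\rho(0)|<\epsilon$ on $(0,\delta)$, and repeating the splitting argument from the previous paragraph (now with weight $t^{-2}$) shows the remainder tends to $0$. If $k<n-1$, so $n-k\ge 2$, the hypothesis $\rho\in\Had{k}{n-1}$ gives $s^{n-1-k}\rho(s)\to 0$ and hence $|\rho(t)/t^{2}|\le\epsilon\, t^{-(n-k+1)}$ on $(0,\delta)$. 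Using $\int_s^{\delta}t^{-(n-k+1)}\d t=\tfrac{1}{n-k}\bigl(s^{-(n-k)}-\delta^{-(n-k)}\bigr)$ I bound $s^{n-k}\int_s^{\delta}|\rho(t)|\,t^{-2}\d t$ by $\tfrac{\epsilon}{n-k}(1-(s/\delta)^{n-k})$, while the tail on $[\delta,M]$ contributes $O(s^{n-k})\to 0$. Sending $\epsilon\to 0$ gives the limit $0$.

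No step presents a genuine obstacle; the only subtlety is the case distinction at $k=n-1$, which reflects the shift of the critical exponent by one caused by the weight $t^{-2}$. Indeed, the boundary value $\rho(0)$ appears precisely at $k=n-1$ because this is the threshold case in which the formal divergence $\int_s^{\infty}t^{-2}\d t\sim 1/s$ is exactly balanced by the factor $s^{n-k}=s$, producing a finite nonzero limit rather than blowup or decay.
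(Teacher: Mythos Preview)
Your proof is correct. The paper's own argument is shorter: it handles the case where $\int_s^\infty|\zeta(t)|\d t$ stays bounded trivially, and in the divergent case applies L'Hospital's rule to $\int_s^\infty|\zeta(t)|\d t\big/s^{-(n-1-k)}$, which reduces the question directly to $\lim_{s\to0^+}s^{n-k}|\zeta(s)|=0$; the second display is then declared ``analogous.'' Your approach is instead a hands-on $\epsilon$--$\delta$ argument, splitting the integral at a small $\delta$ and integrating the pointwise bound $|\zeta(t)|\le\epsilon\,t^{-(n-k)}$ explicitly. The paper's route is more economical, while yours is more elementary (no appeal to L'Hospital and no preliminary case distinction on whether $\int_0^\infty|\zeta|$ is finite) and makes transparent exactly which exponent thresholds matter. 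Your remark that the integral-convergence clause in the definition of $\Had{k}{n}$ is never used is accurate and applies equally to the paper's proof.
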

\begin{proof}
Let $\zeta\in\Had{k}{n}$. If $\zeta$ is such that $\lim_{s\to 0^+}\int_s^{\infty} \zeta(t)\d t$ exists and is finite, then \eqref{eq:lim_t_int_zeta_new_1} is trivial. In the remaining case, we use L'Hospital's rule and the definition of $\Had{k}{n}$ to obtain
$$
\lim_{s\to 0^+} \Big| s^{n-1-k} \int_s^{\infty} \zeta(t) \d t\Big| \leq \lim_{s\to 0^+} s^{n-1-k} \int_s^{\infty} |\zeta(t)|\d t= \lim_{s\to 0^+} \frac{|\zeta(s)|}{\frac{n-1-k}{s^{n-k}}} = \lim_{s\to 0^+} \frac{|s^{n-k}\zeta(s)|}{n-1-k}=0.
$$
The proof of the second statement is analogous. We remark that for $k=n-1$ the limit $\lim_{s\to 0^+} \rho(s)=\rho(0)$ exists and is finite.
\end{proof}

\goodbreak
In the following lemma, basic properties of the integral transform $\cR$ are established.

\begin{lemma}\label{le:r_kln}
For $0\leq k \leq n$ and $0\leq l \leq n-k$, the map $\cR^l:\Had{k}{n}\to\Had{k}{n-l}$ is a bijection with inverse $\cR^{-l}\colon \Had{k}{n-l}\to \Had{k}{n}$, given by
\begin{equation}
    \label{eq:cR_inv}
	\cR^{-l} \rho(s):=(\cR^{-1})^l \rho(s)=\frac{\rho(s)}{s^{l}} -l \int_s^{\infty} \frac{\rho(t)}{t^{l+1}} \d t
\end{equation}
for $\rho\in \Had{k}{n-l}$ and $s>0$.
\end{lemma}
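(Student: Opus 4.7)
The plan is to reduce to the base case $l=1$ and then iterate. For $l=1$, the tasks are (a) verifying that $\cR$ maps $\Had{k}{n}$ into $\Had{k}{n-1}$ for $k\leq n-1$, (b) verifying that the candidate formula $\cR^{-1}\rho(s) = \rho(s)/s - \int_s^\infty \rho(t)/t^{2}\d t$ defines a map $\Had{k}{n-1}\to\Had{k}{n}$, and (c) checking that $\cR$ and $\cR^{-1}$ are mutually inverse.

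For (a), given $\zeta\in\Had{k}{n}$, I would check each defining property of $\Had{k}{n-1}$ directly from $\cR\zeta(s)=s\zeta(s)+\int_s^\infty\zeta(t)\d t$. Continuity and bounded support are immediate. The growth condition follows from the split
$$s^{n-1-k}\cR\zeta(s) = s^{n-k}\zeta(s) + s^{n-1-k}\int_s^\infty\zeta(t)\d t,$$
where the first summand vanishes by hypothesis and the second by Lemma \ref{le:lim_t_int_zeta_new}. For the integrability condition (when $k\leq n-2$), I would integrate by parts on $\int_s^\infty t^{n-k-2}\int_t^\infty\zeta(r)\d r\d t$, which reduces it to the convergence of $\int_s^\infty t^{n-k-1}\zeta(t)\d t$ (assumed) plus a boundary term handled again by Lemma \ref{le:lim_t_int_zeta_new}. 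The boundary case $k=n-1$ is read off directly from the definition of $\cR$. For (b) the argument is parallel, now relying on the second half of Lemma \ref{le:lim_t_int_zeta_new} to evaluate $\lim_{s\to 0^+}s^{n-k}\int_s^\infty\rho(t)/t^2\d t$, again with care around $k=n-1$.

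For (c), I would verify $\cR\circ\cR^{-1}=\operatorname{id}$ and $\cR^{-1}\circ\cR=\operatorname{id}$ by direct substitution followed by Fubini. The only non-trivial rearrangement is
$$\int_s^\infty\int_t^\infty \rho(r)/r^2\d r\d t = \int_s^\infty \rho(r)/r\d r - s\int_s^\infty \rho(r)/r^2\d r,$$
after which the remaining four terms telescope to $\rho(s)$.

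For general $l$, iterating the $l=1$ case gives that $\cR^l:\Had{k}{n}\to\Had{k}{n-l}$ is a bijection, and a left inverse exists. To identify this inverse with the explicit formula $\cR^{-l}\rho(s)=\rho(s)/s^l - l\int_s^\infty \rho(t)/t^{l+1}\d t$, I would argue by induction on $l$: assuming the formula for $(\cR^{-1})^{l-1}$, I apply $\cR^{-1}$ and carry out a Fubini rearrangement of the same shape as in step (c); the inductive step rearranges to the claimed formula for $(\cR^{-1})^l$. Equivalently, one can verify directly that $\cR$ applied to the right-hand side produces the formula for $l-1$ and iterate. The main obstacle throughout is the careful boundary case analysis, especially separating $k=n-l$ from $k<n-l$ (and $k=n-1$ from $k<n-1$ in the inverse direction), where the defining conditions of $\Had{k}{m}$ change form and each sub-case must be treated separately.
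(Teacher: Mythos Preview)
Your proposal is correct and follows essentially the same route as the paper: both reduce to the $l=1$ mapping properties via the same split and Lemma~\ref{le:lim_t_int_zeta_new}, handle the boundary cases $k=n-1$ separately, and use integration by parts (equivalently, your Fubini rearrangement) for the algebraic identities. The only organizational difference is that the paper verifies the explicit general-$l$ inverse formula directly against $\cR^l\zeta$ (using Lemma~\ref{Rtol}) rather than inducting up from $l=1$ as you propose; both are valid.
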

\begin{proof}
    Let $0\leq k \leq n-1$. We will first show that if $\zeta\in\Had{k}{n}$, then $\cR \zeta\in\Had{k}{n-1}$. In case $k=n-1$, it easily follows from the definition of $\Had{n-1}{n}$ that $\lim_{s\to 0^+}\cR\zeta(s)$ exists and is finite and thus $\cR \zeta\in \Had{n-1}{n-1}$. In case $k<n-1$, we have
    $$s^{n-1-k} \cR \zeta(s) = s^{n-k} \zeta(s)+s^{n-1-k}\int_s^{\infty} \zeta(t) \d t$$
    for $s>0$. Since $\zeta\in\Had{k}{n}$, it follows that $\lim_{s\to 0^+} s^{n-k} \zeta(s)=0$. Combined with Lemma~\ref{le:lim_t_int_zeta_new} this shows that 
	$$\lim_{s\to 0^+}s^{n-1-k} \cR \zeta(s)=0.$$
	Next, observe that
    \begin{align*}
	\int_s^{\infty} t^{n-1-k-1} \cR \zeta(t)  \d t &=  \int_s^{\infty} t^{n-k-1}\zeta(t)  \d t+ \int_{s}^{\infty} t^{n-1-k-1} \int_t^{\infty} \zeta(r)  \d r \d t\\
	&=  \int_s^{\infty} t^{n-k-1}\zeta(t)  \d t
	-\frac{s^{n-1-k}}{n-1-k} \int_s^{\infty}\zeta(t)\d t + \int_s^{\infty} \frac{t^{n-1-k}}{n-1-k}\zeta(t)  \d t\\
	&=  \frac{n-k}{n-1-k}\int_s^{\infty} t^{n-k-1} \zeta(t)  \d t -\frac{1 }{n-1-k}\,s^{n-1-k} \int_s^{\infty} \zeta(t)\d t .
	\end{align*}
	Since $\zeta\in\Had{k}{n}$, we see that $\lim_{s\to 0^+} \int_s^{\infty} t^{n-k-1}\zeta(t) \d t$ exists and is finite. Combined with Lemma~\ref{le:lim_t_int_zeta_new}, this shows that the expression above converges to a finite value as $s\to 0^+$. Thus, $\cR \zeta\in\Had{k}{n-1}$. It now easily follows by induction that $\cR^l \zeta \in \Had{k}{n-l}$ for $0\leq k \leq n$ and $0\leq l \leq n-k$, where we remark that the case $l=0$ is trivial.
	
	Second, for $\zeta\in\Had{k}{n}$ we have
	$$\frac{\cR^l \zeta(s)}{s^l}-l\int_s^{\infty}\frac{\cR^l \zeta(t)}{t^{l+1}} \d t = \zeta(s) + \frac{l}{s^l}\int_s^{\infty} t^{l-1} \zeta(t)\d t - l \int_s^{\infty} \frac{\zeta(t)}{t} \d t - l^2 \int_s^{\infty} \frac{1}{t^{l+1}}\int_t^{\infty} r^{l-1} \zeta(r) \d r \d t$$
	for every $s>0$. Using integration by parts, we obtain
	$$\int_s^{\infty} \frac{1}{t^{l+1}} \int_t^{\infty} r^{l-1} \zeta(r) \d r \d t = \frac{1}{l} \Big(\frac{1}{s^l} \int_s^{\infty} t^{l-1} \zeta(t) \d t - \int_s^{\infty} \frac{\zeta(t)}{t} \d t\Big)$$
	for $s>0$ and therefore the (left) inverse of $\cR^l$ is given by \eqref{eq:cR_inv}. Similarly, one shows that $\cR^l$ is the inverse operation to \eqref{eq:cR_inv}.
	
	Now let $\rho\in\Had{k}{n-1}$ with $0\leq k \leq n-1$ be given. We need to show that $\cR^{-1} \rho \in \Had{k}{n}$. Again, it is easy to see that the continuity and the bounded support of $\rho$ imply the same properties for $\cR^{-1}\rho$. Since
	$$
	s^{n-k}\cR^{-1} \rho(s)  = s^{n-k-1}\rho(s) - s^{n-k} \int_s^{\infty} \frac{\rho(t)}{t^2} \d t,
	$$
    it  follows from the definition of $\Had{k}{n-1}$ and Lemma~\ref{le:lim_t_int_zeta_new} that $\lim_{s\to 0^+} s^{n-k} \cR^{-1}  \rho(s)=0$. Note, that in the last step the cases $k<n-1$ and $k=n-1$ need to be dealt with separately. Furthermore, observe that
    \begin{align*}
	\int_s^{\infty} t^{n-k-1}\cR^{-1} \rho(t) \d t &= \int_s^{\infty}  t^{n-1-k-1} \rho(t) \d t -\int_s^{\infty} t^{n-k-1} \int_t^{\infty} \frac{\rho(r)}{r^{2}}\d r \d t \\
	&= \int_s^{\infty} t^{n-1-k-1}\rho(t) \d t +\frac{s^{n-k}}{n-k} \int_s^{\infty} \frac{\rho(t)}{t^{2}}\d s - \int_s^{\infty} \frac{t^{n-k}}{n-k} \frac{\rho(t)}{t^{2}} \d t \\
	&= \frac{n-1-k}{n-k} \int_s^{\infty} t^{n-1-k-1} \rho(t)  \d t + \frac{1}{n-k} s^{n-k} \int_s^{\infty} \frac{\rho(t)}{t^{2}} \d t.
	\end{align*}
	In case $k=n-1$, the first term on the right side of the last equation vanishes. In case $k<n-1$, it follows from the definition of $\Had{k}{n-1}$ that $\lim_{s\to 0^+} \int_s^{\infty} t^{n-1-k-1} \rho(t) \d t$ exists and is finite and from Lemma~\ref{le:lim_t_int_zeta_new} that the second term converges as $s\to0^+$. Thus, $\cR^{-1}\rho \in \Had{k}{n}$ and $\cR:\Had{k}{n}\to\Had{k}{n-1}$ is a bijection.

    Finally, it now easily follows by induction that $\cR^l:\Had{k}{n}\to \Had{k}{n-l}$ is a bijection for $0\leq k \leq n$ and $0\leq l \leq n-k$, where again the case $l=0$ is trivial. Furthermore, this implies that $(\cR^{-1})^l$ is indeed given by \eqref{eq:cR_inv}.
\end{proof}

We remark that since $\Had{k}{k}=\Had{n}{n}$ for every $0\leq k <n$, Lemma~\ref{le:r_kln} allows us to redefine $\Had{k}{n}$ as
$$\Had{k}{n}=\cR^{-(n-k)} \Had{n}{n}=\{ \cR^{-(n-k)} \zeta \colon \zeta \in \Had{n}{n}\}.$$

\subsection{Cauchy--Kubota Formulas for Smooth Functions} 
We use the auxiliary space,
\begin{align*}
\fconvso=\{u\in\fconvs\colon u(0)=0\leq u(x)\text{ for every } x\in\R^n\}.
\end{align*}
If $u\in\fconvso\cap C_+^2(\R^n)$, then the level sets $\{u\leq t\}$ have boundary of class $C^2$ with positive Gaussian curvature for every $t>0$. We have $u(x)=0$ if and only if $x=0$. For such a function $u$ and $0\leq j \leq n-1$, we write $\tau_j(u,x)$ for the $j$th elementary symmetric function of the principal curvatures of $\{u\leq t\}$ at $x\ne0$, where $t=u(x)$.

\goodbreak
We need the following result, whose proof is based on a lemma by Reilly \cite{Reilly}.

\begin{proposition}[\!\cite{Colesanti-Ludwig-Mussnig-5}, Proposition 3.13]
\label{prop:hugo1_313}
Let $1\leq j \leq n-1$ and $\zeta\in\Had{j}{n}$. For $0<t_1<t_2$ and $u\in\fconvso\cap C_+^2(\R^n)$,
\begin{align*}
\int_{\{t_1<u\leq t_2\}} \zeta(|\nabla u(x)|) \big[\Hess u(x)\big]_{n-j} \d x = &\int_{\{t_1 < u \leq t_2\}} (\cR^{n-j}\zeta)(|\nabla u(x)|)\, \tau_{n-j}(u,x) \d x\\
&- \int_{\{u=t_2\}} \eta_{n-j-1}(|\nabla u(x)|)\, \tau_{n-j-1}(u,x) \d \mathcal{H}^{n-1}(x)\\
&+ \int_{\{u=t_1\}} \eta_{n-j-1}(|\nabla u(x)|)\, \tau_{n-j-1}(u,x) \d \mathcal{H}^{n-1}(x),
\end{align*}
where $\eta_{n-j-1}(s)=\int_s^{\infty} t^{n-j-1} \zeta(t) \d t$ for $s>0$.
\end{proposition}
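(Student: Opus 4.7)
The plan is to use Reilly's divergence identity for the Hessian together with an iterated integration by parts on the shell $\{t_1<u\leq t_2\}$. Since $u\in\fconvso\cap C_+^2(\R^n)$ and $t_2>t_1>0$, this set is a smooth region bounded by two strictly convex level hypersurfaces $\{u=t_1\}$ and $\{u=t_2\}$ with $|\nabla u|>0$ throughout; in particular $\nabla u/|\nabla u|$ is the outer normal on $\{u=t_2\}$ and the inner normal on $\{u=t_1\}$, so the divergence theorem applies cleanly on the shell.

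The key algebraic input is that each Newton transformation $T_k(\Hess u)$ of the Hessian is divergence-free pointwise (a consequence of $\partial_i\partial_j u=\partial_j\partial_i u$), together with the Newton identity $T_{m-1}(A):A=m[A]_m$. For each $0\leq l\leq n-j-1$ I would consider the vector field
$$V_l(x):=\phi_l(|\nabla u(x)|)\,T_{n-j-1-l}(\Hess u(x))\,\nabla u(x),$$
with a radial weight $\phi_l$ chosen to match the transformed function $\cR^l\zeta$. A direct computation shows that $\div V_l$ equals a multiple of $\phi_l(|\nabla u|)\,[\Hess u]_{n-j-l}$ plus a derivative contribution in which $\phi_l'(|\nabla u|)$ is paired with $T_{n-j-1-l}(\Hess u)\nabla u\cdot\nabla|\nabla u|$. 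The defining identity $\cR\zeta(s)=s\,\zeta(s)+\int_s^\infty\zeta(t)\d t$ is precisely what is needed to absorb this derivative contribution into the integrand at step $l+1$ after applying the divergence theorem, while producing one boundary term on $\{u=t_1\}\cup\{u=t_2\}$ in the process.

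To identify the accumulated boundary terms in the stated form, I would use that on each level set $\{u=t\}$ with outer normal $\nu=\nabla u/|\nabla u|$ the tangent-tangent block of $\Hess u$ equals $|\nabla u|$ times the second fundamental form, whose eigenvalues are the principal curvatures of the level set. A Laplace-type expansion of $T_{n-j-1}(\Hess u)\nabla u\cdot\nu$ in terms of the principal minors of that tangential block produces the factor $|\nabla u|^{n-j-1}\tau_{n-j-1}(u,x)$; combined with $\phi_{n-j-1}=\eta_{n-j-1}$ this gives the claimed boundary integrands, with the sign difference between the two pieces coming from the opposite orientations of the two level sets. After $n-j-1$ iterations, the interior integrand has been reduced to $(\cR^{n-j}\zeta)(|\nabla u|)\,\tau_{n-j}(u,x)$, using Lemma \ref{Rtol} to recognize $\cR^{n-j}\zeta(s)=s^{n-j}\zeta(s)+(n-j)\eta_{n-j-1}(s)$, so that the "purely tangential" term $s^{n-j}\zeta(s)$ comes from the last application of Reilly's identity and the $(n-j)\eta_{n-j-1}(s)$ contribution is the accumulated weight from the $n-j-1$ boundary extractions.

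The main obstacle is twofold. First, implementing Reilly's lemma requires carrying out the computation relating $T_{k-1}(\Hess u)\nabla u\cdot\nu$ to $|\nabla u|^{k-1}\tau_{k-1}(u,x)$ on a general level set, which rests on a careful use of the block decomposition of $\Hess u$ in a level-set-adapted frame and on the divergence-freeness of the Newton transformations. Second, one must keep the radial weights synchronized with the $\cR$-transform across all $n-j$ steps so that the surviving interior weight is exactly $\cR^{n-j}\zeta$ and so that the $\eta_{n-j-1}$ boundary weight appears with the correct multiplicity; these are precisely the ingredients packaged in Reilly's formula cited from \cite{Reilly}, and once they are invoked the interior and boundary contributions telescope into the claimed identity.
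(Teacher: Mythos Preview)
The paper does not prove this proposition; it is quoted from \cite{Colesanti-Ludwig-Mussnig-5} with only the remark that the argument is based on a lemma of Reilly \cite{Reilly}. So there is no detailed proof here against which to compare, beyond the hint that Reilly's identity is the essential tool.

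Your toolbox is the right one: the divergence-free Newton tensors $T_k(\Hess u)$, the Newton identity $\operatorname{tr}(T_{m-1}A)=m[A]_m$, and the level-set frame computation giving $(T_k)_{\nu\nu}=|\nabla u|^k\tau_k$. But the multi-step iteration you propose is both unnecessary and, as written, incorrect. First, on the boundary one has $T_{n-j-1}(\Hess u)\nabla u\cdot\nu=|\nabla u|\,(T_{n-j-1})_{\nu\nu}=|\nabla u|^{n-j}\tau_{n-j-1}$, not $|\nabla u|^{n-j-1}\tau_{n-j-1}$, so the matching $\phi_{n-j-1}=\eta_{n-j-1}$ already fails. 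Second, computing $\div V_l$ and using $\Hess u\,T_k=[\Hess u]_{k+1}I-T_{k+1}$ shows that the derivative contribution produces a term in $\langle T_{n-j-l}\nabla u,\nabla u\rangle=|\nabla u|^{n-j-l+2}\tau_{n-j-l}$, not $[\Hess u]_{n-j-l-1}$; the step does not lower the Hessian index, so there is no telescoping chain of length $n-j-1$, and in any case your scheme would generate boundary terms with \emph{different} curvature orders $\tau_{n-j-1-l}$ at each step, with no mechanism for them to coalesce into the single $\eta_{n-j-1}\,\tau_{n-j-1}$ term.

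In fact a \emph{single} application of the divergence theorem already does the job. Take the vector field
\[
V(x):=\phi(|\nabla u(x)|)\,T_{n-j-1}(\Hess u(x))\,\nabla u(x),\qquad \phi(s):=-\frac{\eta_{n-j-1}(s)}{s^{\,n-j}}.
\]
Using $\div(T_{n-j-1}\nabla u)=(n-j)[\Hess u]_{n-j}$, the identity $\langle T_{n-j-1}\nabla u,\nabla|\nabla u|\rangle=|\nabla u|\,[\Hess u]_{n-j}-|\nabla u|^{\,n-j+1}\tau_{n-j}$ (which is exactly Reilly's lemma), and the ODE $s\phi'(s)+(n-j)\phi(s)=\zeta(s)$ satisfied by this $\phi$, one obtains pointwise
\[
\div V=\zeta(|\nabla u|)\,[\Hess u]_{n-j}-(\cR^{n-j}\zeta)(|\nabla u|)\,\tau_{n-j}.
\]
Integrating over $\{t_1<u\le t_2\}$ and evaluating $V\cdot\nu=\phi(|\nabla u|)\,|\nabla u|^{n-j}\tau_{n-j-1}=-\eta_{n-j-1}(|\nabla u|)\,\tau_{n-j-1}$ on the two boundary components yields the stated formula in one stroke. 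I would replace the iterative scheme by this direct computation.
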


\goodbreak
As a consequence, we obtain the following lemma.

\begin{lemma}\label{needed lemma} Let $1\leq j\leq n-1$ and $\zeta\in\Had{j}{n}$. For $u\in\fconvso\cap C_+^2(\R^n)$,
$$
\int_{\R^n} \zeta(|\nabla u(x)|) \big[\Hess u(x)\big]_{n-j} \d x= \int_{\R^n} (\cR^{n-j}\zeta)(|\nabla u(x)|)\, \tau_{n-j}(u,x) \d x.
$$
\end{lemma}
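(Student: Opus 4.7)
The plan is to apply Proposition~\ref{prop:hugo1_313} on the annular region $\{t_1 < u \leq t_2\}$ and then pass to the limits $t_1 \to 0^+$ and $t_2 \to \infty$. Three things must be verified: (a) the outer boundary integral over $\{u = t_2\}$ vanishes for $t_2$ large enough; (b) the inner boundary integral over $\{u = t_1\}$ vanishes as $t_1 \to 0^+$; (c) the bulk integrals on both sides of the identity converge to the corresponding integrals over $\R^n$.

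For (a), since $\zeta \in \Had{j}{n}$ has bounded support, say contained in $[0, M]$, the function $\eta_{n-j-1}(s) = \int_s^{\infty} t^{n-j-1} \zeta(t)\d t$ vanishes for $s > M$. Since $u \in \fconvso \cap C_+^2(\R^n)$ is super-coercive and attains its minimum only at $0$, the monotonicity of $|\nabla u|$ along rays (which follows from the convexity and smoothness of $u$) combined with super-coercivity ensures that $|\nabla u(x)| \to \infty$ uniformly on $\{u = t_2\}$ as $t_2 \to \infty$. Hence $\eta_{n-j-1}(|\nabla u(x)|) = 0$ for every $x \in \{u = t_2\}$ once $t_2$ is sufficiently large, killing the middle term in Proposition~\ref{prop:hugo1_313}.

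For (b), the main obstacle, we use the behavior of $u$ near $0$. Because $u \in C_+^2$ and $0$ is the unique minimum with $u(0) = 0$, a Taylor expansion at the origin gives $u(x) = \tfrac12 \langle \Hess u(0) x, x\rangle + o(|x|^2)$ with $\Hess u(0)$ positive definite. Consequently, for small $t_1 > 0$, the level surface $\{u = t_1\}$ is $C^2$-close to the ellipsoid $\{\tfrac12 \langle \Hess u(0) x, x\rangle = t_1\}$, which has size of order $\sqrt{t_1}$. Standard estimates then yield, on $\{u = t_1\}$, that $|\nabla u(x)| = O(\sqrt{t_1})$, the $(n-j-1)$st elementary symmetric function $\tau_{n-j-1}(u,x)$ is of order $t_1^{-(n-j-1)/2}$ (since principal curvatures scale like $1/\sqrt{t_1}$), and $\hm^{n-1}(\{u = t_1\}) = O(t_1^{(n-1)/2})$. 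Because $\zeta \in \Had{j}{n}$, the function $\eta_{n-j-1}$ has a finite limit as $s \to 0^+$ and is therefore bounded near zero, so the inner boundary integral is bounded by a constant times $t_1^{j/2}$, which tends to $0$ since $j \geq 1$.

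For (c), observe that $\zeta$ has bounded support and, by Lemma~\ref{Rtol}, so does $\cR^{n-j}\zeta$. Choose $M>0$ large enough that both vanish outside $[0,M]$. By super-coercivity, the set $\{x\in\R^n \colon |\nabla u(x)| \leq M\}$ is bounded, so both integrands in the claimed identity are bounded, continuous, and compactly supported on $\R^n$, hence integrable. Since the indicator of $\{t_1 < u \leq t_2\}$ converges pointwise to the indicator of $\R^n \setminus \{0\}$ as $t_1\to 0^+$ and $t_2 \to \infty$, and $\{0\}$ has measure zero, dominated convergence gives the convergence of both bulk integrals to the integrals over $\R^n$. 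Combining (a), (b), and (c) with Proposition~\ref{prop:hugo1_313} yields the claim.
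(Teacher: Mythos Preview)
Your proof follows the same overall strategy as the paper: apply Proposition~\ref{prop:hugo1_313} on $\{t_1 < u \le t_2\}$ and let $t_1\to 0^+$, $t_2\to\infty$, checking that the two boundary integrals vanish. One small slip in (a): your claim that ``$|\nabla u|$ is monotone along rays'' does not follow from convexity and smoothness alone; the paper instead uses the subgradient inequality at the origin, $u(0)\ge u(x)+\langle\nabla u(x),-x\rangle$, to get $|\nabla u(x)| \ge u(x)/|x|$, which together with super-coercivity gives $|\nabla u(x)|\to\infty$ as $|x|\to\infty$ and hence the vanishing of the outer boundary term once $t_2$ is large (since $\eta_{n-j-1}$ has compact support). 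For the inner boundary term you argue via Taylor expansion and explicit scaling of curvatures and surface measure, which is correct; the paper takes the shorter route of bounding $\eta_{n-j-1}$ uniformly and recognizing, via \eqref{curvature measures smooth case}, that $\int_{\{u=t_1\}}\tau_{n-j-1}(u,x)\,\mathrm{d}\hm^{n-1}(x)$ is a constant multiple of the curvature measure $C_j(\{u\le t_1\},\cdot)$ of the total boundary, which tends to $0$ as $\{u\le t_1\}$ shrinks to the single point $\{0\}$.
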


\begin{proof}
Since $u(0)\geq u(x)+\langle \nabla u(x),-x\rangle$ it follows from the Cauchy--Schwarz inequality that
$$|\nabla u(x)| \geq \frac{u(x)-u(0)}{|x|}$$
for every $x\in\R^n\backslash\{0\}$. Using that
$\lim_{|x|\to\infty} u(x)/|x|=+\infty$, we obtain  that $\lim_{|x|\to\infty}|\nabla u(x)|=+\infty$. The proof now follows by letting $t_1\to0^+$ and $t_2\to\infty$ in Proposition \ref{prop:hugo1_313}.
Here, for the integral involving $t_1$ we use that $\eta_{n-j-1}$ is bounded and thus, since $\{u=0\}=\{0\}$ and because of \eqref{curvature measures smooth case}, this integral vanishes as $t_1\to 0^+$. For the integral involving $t_2$, we use the fact that $\eta_{n-j-1}$ has compact support.
\end{proof}

We can now prove Cauchy--Kubota formulas for convex functions in $C_+^2(\R^n)$.

\begin{proposition}
\label{prop:ck_regular}
Let $1\leq j \leq k <n$. If $\zeta\in\Had{j}{n}$, then
\begin{align}
\label{eq:ck_regular}
\int_{\R^n} &\zeta(|\nabla u(x)|) \big[\Hess u(x)\big]_{n-j} \d x \nonumber\\
\\[-16pt]
&= \frac{\kappa_n}{\kappa_k \kappa_{n-k} }\binom{n}{k}
\int_{\Grass{k}{n}} \int_E \xi(|\nabla \proj_E u(x_E)|) \,\big[\Hess \proj_E u(x_E)\big]_{k-j} \d x_E \d E
\nonumber
\end{align}
for every $u\in\fconvs\cap C_+^2(\R^n)$, where $\xi\in\Had{j}{k}$ is given by
$$\xi(s):= \frac{\kappa_{n-k}}{\binom{n-j}{k-j}} \cR^{n-k} \zeta(s)$$
for $s>0$.
\end{proposition}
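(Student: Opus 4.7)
The plan is to reduce \eqref{eq:ck_regular} to a per-level-set identity that follows from Schneider's formula \eqref{Schneider1} applied to the sublevel sets of $u$. Since both sides of \eqref{eq:ck_regular} are invariant under $u\mapsto u(\cdot+x_0)+c$, and any $u\in\fconvs\cap C_+^2(\R^n)$ is strictly convex and super-coercive, I would first normalize so that $u\in\fconvso\cap C_+^2(\R^n)$ with $u(0)=0$; a Schur-complement computation for $\Hess\proj_E u$ and the uniqueness of the minimizer of $u$ give $\proj_E u\in\fconvsE\cap C_+^2(E)$ with $\proj_E u(0)=0$. Applying Lemma \ref{needed lemma} to the left-hand side of \eqref{eq:ck_regular} rewrites it as $\int_{\R^n}(\cR^{n-j}\zeta)(|\nabla u|)\tau_{n-j}(u,x)\d x$, while applying Lemma \ref{needed lemma} to the inner integral on the right (with $k$, $\xi$ in place of $n$, $\zeta$; the case $j=k$ is trivial) rewrites it as $\int_E(\cR^{k-j}\xi)(|\nabla\proj_E u|)\tau_{k-j}(\proj_E u,x_E)\d x_E$. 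The semigroup identity $\cR^{k-j}\circ\cR^{n-k}=\cR^{n-j}$ together with the definition of $\xi$ yields $\cR^{k-j}\xi=\tfrac{\kappa_{n-k}}{\binom{n-j}{k-j}}\cR^{n-j}\zeta$, so setting $\eta:=\cR^{n-j}\zeta\in C_c([0,\infty))$ and collecting constants, the statement reduces to
\begin{equation*}
\int_{\R^n}\eta(|\nabla u|)\,\tau_{n-j}(u,x)\d x=\frac{\kappa_n\binom{n}{j}}{\kappa_k\binom{k}{j}}\int_{\Grass{k}{n}}\int_E\eta(|\nabla\proj_E u|)\,\tau_{k-j}(\proj_E u,x_E)\d x_E\d E.
\end{equation*}

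Since $\nabla u$ vanishes only at the unique minimizer, the coarea formula expresses each side as a $t$-integral over the smooth level hypersurfaces $\bd K_t$, respectively $\bd\proj_E K_t$, where $K_t:=\{u\le t\}$ and $\proj_E K_t=\{\proj_E u\le t\}$ by \eqref{eq:proj_fct_lvl_set}. Converting $\tau_{n-j}(K_t,x)\d\hm^{n-1}(x)=\binom{n-1}{n-j}\d C_{j-1}(K_t,\cdot)$ via \eqref{curvature measures smooth case} (and its $E$-analogue), a short check shows that the binomials combine with $\kappa_n\binom{n}{j}/\kappa_k\binom{k}{j}$ to produce exactly the Schneider prefactor, so the claim reduces to establishing, for each $t>0$, the per-level-set identity
\begin{equation*}
\int_{\bd K_t}\frac{\eta(|\nabla u|)}{|\nabla u|}\d C_{j-1}(K_t,x)=\frac{n\kappa_n}{k\kappa_k}\int_{\Grass{k}{n}}\int_{\bd\proj_E K_t}\frac{\eta(|\nabla\proj_E u|)}{|\nabla\proj_E u|}\d C^E_{j-1}(\proj_E K_t,x_E)\d E.
\end{equation*}

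To establish this, observe that on $\bd K_t$ the outer unit normal is $\nu_{K_t}(x)=\nabla u(x)/|\nabla u(x)|$, so the smooth inverse Gauss map yields a positive continuous $g_t\colon\sn\to\R$ with $|\nabla u(x)|=g_t(\nu_{K_t}(x))$; the left-hand integrand is then $\phi_t\circ\nu_{K_t}$ with $\phi_t(\nu):=\eta(g_t(\nu))/g_t(\nu)$. By Lemma \ref{le:proj_subgradient}, any silhouette point $x\in\bd K_t$ with $\nu_{K_t}(x)=\nu_E\in\sn\cap E$ projects to a point $\proj_E x\in\bd\proj_E K_t$ whose outer $E$-normal is again $\nu_E$ and satisfies $\nabla\proj_E u(\proj_E x)=\nabla u(x)$; hence the right-hand integrand is $\phi_t\circ\nu_{\proj_E K_t}$ for the \emph{same} $\phi_t$. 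Since, for smooth $K_t$, the silhouette $\{x\in\bd K_t:\nu_{K_t}(x)\in E\}$ bijects with $\sn\cap E$ under $\nu_{K_t}$ and with $\bd\proj_E K_t$ under $\proj_E$, one has $\proj_E\nu_{K_t}^{-1}(\omega)\cap\bd\proj_E K_t=\nu_{\proj_E K_t}^{-1}(\omega\cap E)$ for every Borel $\omega\subseteq\sn$; pushing \eqref{Schneider1} forward under the Gauss map and passing from indicator functions to continuous normal-dependent functions by a standard approximation then yields the per-level-set identity. The main obstacle is precisely this last step: extending Schneider's formula from Borel sets to the integrand $\eta(|\nabla u|)/|\nabla u|$, which relies on the smoothness of $K_t$ and on Lemma \ref{le:proj_subgradient} to identify $|\nabla u|$ with $|\nabla\proj_E u|$ on silhouettes.
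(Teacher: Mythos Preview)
Your proposal is correct and follows essentially the same route as the paper: both reduce to $\fconvso\cap C_+^2(\R^n)$, invoke Lemma~\ref{needed lemma} on each side, use the coarea formula, and establish the per-level-set identity by extending Schneider's formula \eqref{Schneider1} to continuous weights via the silhouette correspondence (your use of Lemma~\ref{le:proj_subgradient} is exactly the paper's relation \eqref{proj_grad}, and your Gauss-map pushforward is the paper's $\proj_E^{-1}=\nu_K^{-1}\circ\nu_{\proj_E K}$). The only difference is the order of steps---the paper first records the weighted curvature-measure identity \eqref{curv_proj}, applies it on slabs $\{t_1<u\le t_2\}$, and only then lets $t_1\to0^+$, $t_2\to\infty$ and invokes Lemma~\ref{needed lemma}, whereas you apply Lemma~\ref{needed lemma} globally first and then reduce to the level-set identity---but this is a harmless reordering of the same argument.
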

\begin{proof}
Let $K\in\cK^n$ be of class $C^{2}$ with positive Gaussian curvature. In particular, this implies that $K$ is strictly convex. For $E\in\Grass{k}{n}$, let $\bd_E \proj_E K$ denote the boundary of $\proj_E K$ as a subset of $E$. It follows from the strict convexity of $K$ that for every $x_E\in\bd_E\proj_E K$, there exists a unique point $x\in\bd K$ such that $\proj_E x=x_E$. The map $x_E\mapsto x$ can be also defined as follows. Let $\nu_K\colon\partial K\to\sn$ be the Gauss map of $K$, and let $\nu_{\proj_E K}\colon\bd_E\proj_E K\to\mathbb{S}^{k-1}_E$ be the Gauss map of $\proj_E K$ (here the unit sphere $\mathbb{S}^{k-1}_E$ of $E$ is seen as a subset of $\sn$). Then $\nu_K$ and $\nu_{\proj_E K}$ are diffeomorphisms and 
$$
x=\nu_K^{-1}(\nu_{\proj_E K}(x_E)).
$$
For simplicity, we write $x=\proj_E^{-1} x_E$, that is, $\proj_E^{-1}=\nu_K^{-1}\circ\nu_{\proj_E K}$. Let $\gamma\colon\bd K\to\R$ be continuous (which implies in particular that $\gamma\circ\proj_E^{-1}$ is continuous). It follows from \eqref{curvature measures smooth case}, \eqref{Schneider1} combined with Fubini's theorem, and again \eqref{curvature measures smooth case} (in dimension $k$) that
\begin{align}
\begin{split}
\label{curv_proj}
\int_{\bd K} &\gamma(x) \,\tau_{n-j}(K,x)\d \hm^{n-1}(x)\\
&= \binom{n-1}{n-j} \int_{\bd K} \gamma(x) \d C_{j-1}(K,x)\\
&= \binom{n-1}{n-j} \frac{n \kappa_n}{k \kappa_k} \int_{\Grass{k}{n}} \int_{\bd_E \proj_E K} \gamma(\proj_E^{-1} x_E) \d C_{j-1}^E(\proj_E K,x_E) \d E\\
&=
\frac{\kappa_n \binom{n}{j}}{\kappa_k\binom{k}{j}}\int_{\Grass{k}{n}}\int_{\bd_E \proj_E K}\gamma(\proj_E^{-1}x_E)\, \tau^E_{k-j}(\proj_E K, x_E)\d \hm^{k-1}(x_E)\d E.
\end{split}
\end{align}
Here, $\tau_{k-j}^E(\proj_E K, x_E)$ is the $(k-j)$th elementary symmetric function of the principal curvatures of $\bd_E \proj_E K$ at $x_E$ in $E$.

Now, let $u\in\fconvso\cap C_+^2(\R^n)$ and $0<t_1<t_2$. We first observe that, by the coarea formula, 
\begin{equation*}
 \int_{\{t_1<u\le t_2\}} (\cR^{n-j}\zeta)(|\nabla u(x)|)\, \tau_{n-j}(u,x) \d x
=\int_{t_1}^{t_2}\int_{\{u=t\}}\frac{(\cR^{n-j} \zeta)(|\nabla u(x)|)}{|\nabla u(x)|}\tau_{n-j}(u,x)\d\mathcal{H}^{n-1}(x)\d t.
\end{equation*}

Next, fix $E\in \Grass{k}{n}$. For every $t>0$, the convex set $\{u\leq t\}$ has positive Gaussian curvature. We consider the map $\proj_E^{-1}\colon\bd_E\proj_E\{u\leq t\}\to\{u= t\}$ defined as above. Combined with Lemma~\ref{le:proj_subgradient} we therefore have
\begin{equation}\label{proj_grad}
\nabla u(\proj_E^{-1}x_E)=\nabla \proj_E u(x_E)
\end{equation}
for every $x_E\in\bd_E\proj_E\{u\le t\}$.

Hence
\begin{align*}
&\int_{\{t_1<u\le t_2\}} (\cR^{n-j}\zeta)(|\nabla u(x)|) \,\tau_{n-j}(u,x) \d x\\
&=\int_{t_1}^{t_2}\int_{\{u=t\}}\frac{(\cR^{n-j} \zeta)(|\nabla u(x)|)}{|\nabla u(x)|} \tau_{n-j}(u, x) \d \hm^{n-1}(x)\d t\\
&=\frac{\kappa_n \binom{n}{j}}{\kappa_k\binom{k}{j}}\int_{t_1}^{t_2}\int_{\Grass{k}{n}}\int_{\bd_E \proj_E \{u\le t\}}
\frac{(\cR^{n-j} \zeta)(|\nabla u(\proj_E^{-1}x_E)|)}{|\nabla u(\proj_E^{-1}x_E)|}
\tau^E_{k-j}(\proj_E u,x_E)\d \hm^{k-1}(x_E)\d E\d t\\
&=\frac{\kappa_n \binom{n}{j}}{\kappa_k\binom{k}{j}}\int_{\Grass{k}{n}}\int_{t_1}^{t_2}\int_{ \{\proj_E u = t\}}
\frac{(\cR^{n-j} \zeta)(|\nabla\proj_E u(x_E)|)}{|\nabla\proj_E u(x)|}\,
\tau^E_{k-j}(\proj_E u, x_E) \d \mathcal{H}^{k-1}(x_E)\d t \d E\\
&=\frac{\kappa_n \binom{n}{j}}{\kappa_k\binom{k}{j}}\int_{\Grass{k}{n}}\int_{\{t_1<\proj_E u\le t_2\}} (\cR^{n-j}\zeta)(|\nabla \proj_E u(x_E)|)\,\tau^E_{k-j}(\proj_E u, x_E)\d x_E\d E,
\end{align*}
where we have used the coarea formula, (\ref{curv_proj}), (\ref{proj_grad}), and Fubini's theorem.
Next, let $t_1\to 0^+$ and $t_2\to +\infty$. We apply Lemma~\ref{needed lemma} to both $u$ and to $\proj_E u$. On the right side we also use the boundedness of $\cR^{n-j}\zeta$,  equation \eqref{curvature measures smooth case} and the dominated convergence theorem, and obtain
\begin{multline*}
\int_{\R^n} \zeta(|\nabla u(x)|)\,\big[\Hess u(x)\big]_{n-j} \d x\\
= \frac{\kappa_n \binom{n}{j}}{\kappa_k\binom{k}{j}} \int_{\Grass{k}{n}} \int_E \cR^{-(k-j)} (\cR^{n-j} \zeta)(|\nabla \proj_E u(x_E)|)[\Hess \proj_E u(x_E)]_{k-j} \d x_E \d _E.
\end{multline*}
Since $\cR^{-(k-j)} \cR^{n-j} \zeta = \cR^{n-k} \zeta$ and
$$\binom{n}{j}\binom{n-j}{k-j}=\binom{n}{k}\binom{k}{j},$$
we have therefore shown \eqref{eq:ck_regular} for $u\in\fconvso\cap C_+^2(\R^n)$. The conclusion now follows since for each $u\in\fconvs\cap C_+^2(\R^n)$ there exists $u_0\in\fconvso\cap C_+^2(\R^n)$ such that $\epi(u_0)$ is a translate of $\epi(u)$ in $\R^{n+1}$ and since both sides of \eqref{eq:ck_regular} are invariant with respect to epi-translations.
\end{proof}

\goodbreak
For the special case $j=k$, we immediately obtain the following result, where we use that each function in $\Had{j}{j}$ can be uniquely extended to a function in $C_c([0,\infty))$.

\begin{proposition}
\label{prop:ck}
Let $1\leq j <n$. If $\zeta\in\Had{j}{n}$, then 
\begin{equation*}
\int_{\R^n} \zeta(|\nabla u(x)|) \big[\Hess u(x)\big]_{n-j} \d x=\frac{\kappa_n}{\kappa_j \kappa_{n-j}} \binom{n}{j} \int_{\Grass{j}{n}} \int_{\dom (\proj_E u)} \alpha(|\nabla \proj_E u(x_E)|) \d x_E \d E
\end{equation*}
for every $u\in\fconvs\cap C_+^2(\R^n)$, where $\alpha\in C_c([0,\infty))$ is given by
$$\alpha(s):=  \kappa_{n-j} \cR^{n-j} \zeta(s)$$ 
for $s>0$. 
\end{proposition}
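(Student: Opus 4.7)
The plan is to specialize Proposition~\ref{prop:ck_regular} to the diagonal case $k=j$, since every ingredient on the right-hand side then collapses to the desired form. Setting $k=j$ in the definition of $\xi$ from Proposition~\ref{prop:ck_regular} gives
$$\xi(s) \,=\, \frac{\kappa_{n-j}}{\binom{n-j}{0}}\,\cR^{n-j}\zeta(s) \,=\, \kappa_{n-j}\,\cR^{n-j}\zeta(s) \,=\, \alpha(s),$$
so the coefficient function produced by Proposition~\ref{prop:ck_regular} is exactly $\alpha$. By Lemma~\ref{le:r_kln}, $\cR^{n-j}\zeta \in \Had{j}{j}$, and since each element of $\Had{j}{j}$ admits a unique continuous extension to $[0,\infty)$ with compact support (this is the defining property of $\Had{n}{n}$ applied at $n=j$, together with the remark after Lemma~\ref{le:r_kln} that $\Had{j}{j}=\Had{n}{n}$), we conclude $\alpha \in C_c([0,\infty))$, as required by the statement.

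Next, the inner integrand simplifies: the convention $[A]_0 := 1$ gives $\big[\Hess \proj_E u(x_E)\big]_{k-j} = \big[\Hess \proj_E u(x_E)\big]_0 = 1$, so the integrand on $E$ reduces to $\alpha(|\nabla \proj_E u(x_E)|)$. It remains to replace integration over $E$ by integration over $\dom(\proj_E u)$. Because $u \in \fconvs \cap C_+^2(\R^n)$ is finite-valued and super-coercive, for every $x_E \in E$ the minimum defining $\proj_E u(x_E) = \min_{z\in E^\perp} u(x_E+z)$ is attained and finite; hence $\proj_E u$ is a finite-valued convex function on all of $E$, which gives $\dom(\proj_E u) = E$. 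The integral over $E$ in Proposition~\ref{prop:ck_regular} therefore coincides with the integral over $\dom(\proj_E u)$ appearing in the claim, and the identity follows directly.

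There is no substantive obstacle here, as the claim is a direct bookkeeping specialization of Proposition~\ref{prop:ck_regular}; the only points that need verification are the coefficient identification $\xi = \alpha$, the membership $\alpha \in C_c([0,\infty))$, and the identification $\dom(\proj_E u)=E$ in the smooth setting, all of which have been addressed above.
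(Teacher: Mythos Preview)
Your proof is correct and takes essentially the same approach as the paper: the paper simply states that the result is obtained as the special case $j=k$ of Proposition~\ref{prop:ck_regular}, using that each function in $\Had{j}{j}$ extends uniquely to $C_c([0,\infty))$. You have spelled out precisely those details---the identification $\xi=\alpha$, the membership $\alpha\in C_c([0,\infty))$, the collapse of $[\Hess\proj_E u]_0$ to $1$, and the equality $\dom(\proj_E u)=E$ for $u\in C_+^2(\R^n)$---which the paper leaves implicit.
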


\goodbreak
\subsection{New Proof of Theorem~\ref{thm:existence_singular_hessian_vals}}
The case $j=n$ follows from
Proposition~\ref{prop:n-hom} and the case $j=0$ is trivial. So let $1\leq j \leq n-1$. For $\zeta\in\Had{j}{n}$, define
$$\alpha(s):= \kappa_{n-j}\cR^{n-j}\zeta(s)$$
for $s>0$ and note that $\alpha$ can be extended to a function in $C_c([0,\infty))$ by Lemma \ref{le:r_kln} and the definition of $\Had{j}{j}$.
Hence Lemma~\ref{le:cauchy_kubota_is_continuous_val} shows that the functional $\oZ$, defined by
$$\oZ(u):= \frac{\kappa_n}{\kappa_j \kappa_{n-j}} \binom{n}{j} \int_{\Grass{j}{n}} \int_{\dom (\proj_E u)} \alpha(|\nabla \proj_E u(x_E)|) \d x_E \d E,$$
is a continuous, epi-translation and rotation invariant valuation on $\fconvs$. 
From Proposition~\ref{prop:ck}, we obtain that 
\begin{equation}\label{eq:new_proof_z_equal_regular}
\oZ(u)=  \int_{\R^n} \zeta(|\nabla u(x)|) \big[\Hess u(x)\big]_{n-j} \d x
\end{equation}
for $u\in\fconvs\cap C_+^2(\R^n)$. Thus $\oZ$ has the required properties. It is uniquely determined by \eqref{eq:new_proof_z_equal_regular} since  $\fconvs\cap C_+^2(\R^n)$ is dense in $\fconvs$ by Lemma~\ref{le:C2p_dense}.

\subsection{An Auxiliary Result} 
Using polar coordinates, we obtain from \eqref{eq:rep_ozz_c2} and \eqref{eq:int_u_psi_int_v_phi}  that
\begin{equation}
\label{eq:v_0_zeta}
\oZZ{0}{\zeta}^n(u)=\int_{\R^n} \zeta(|x|) \d x = n \, \kappa_n \lim_{s\to 0^+} \int_s^{\infty} t^{n-1} \zeta(t) \d t
\end{equation}
for every $u\in\fconvs$ and $\zeta\in\Had{0}{n}$.
The following result proves the case $j=0$ in Theorem~\ref{thm:cauchy_kubota_for_hessian_vals}.

\begin{lemma}
\label{le:ck_zero}
Let $0\leq k < n$. If $\zeta\in\Had{0}{n}$, then
$$\oZZ{0}{\zeta}^n(u) = \frac{\kappa_n}{\kappa_k} \int_{\Grass{k}{n}} \oZZ{0}{\cR^{n-k}\zeta}^k(\proj_E u) \d E$$
for every $u\in\fconvs$.
\end{lemma}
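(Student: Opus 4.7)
The plan is to reduce the stated identity to an elementary one-variable integral identity, exploiting the fact that $\oZZ{0}{\cdot}^n$ is constant in its function argument. By the auxiliary formula \eqref{eq:v_0_zeta}, $\oZZ{0}{\zeta}^n(u) = n\kappa_n \lim_{s\to 0^+}\int_s^{\infty} t^{n-1}\zeta(t)\d t$ is independent of $u\in\fconvs$. Since $\cR^{n-k}\zeta \in \Had{0}{k}$ by Lemma~\ref{le:r_kln}, the analogous formula applied in dimension $k\geq 1$ gives $\oZZ{0}{\cR^{n-k}\zeta}^k(\proj_E u) = k\kappa_k \lim_{s\to 0^+}\int_s^{\infty} t^{k-1}(\cR^{n-k}\zeta)(t)\d t$, also independent of $E$ and of $u$. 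Since the Haar measure on $\Grass{k}{n}$ is a probability measure, proving the lemma (for $k\geq 1$) reduces to verifying
\begin{equation*}
n\lim_{s\to 0^+}\int_s^{\infty} t^{n-1}\zeta(t)\d t \;=\; k\lim_{s\to 0^+}\int_s^{\infty} t^{k-1}(\cR^{n-k}\zeta)(t)\d t.
\end{equation*}

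For this, I would expand $\cR^{n-k}\zeta$ via Lemma~\ref{Rtol} as $t^{n-k}\zeta(t) + (n-k)\int_t^{\infty} r^{n-k-1}\zeta(r)\d r$ and apply Fubini's theorem to swap the order of integration in the resulting double integral. A short calculation will yield the exact pre-limit identity
\begin{equation*}
k\int_s^{\infty} t^{k-1}(\cR^{n-k}\zeta)(t)\d t = n\int_s^{\infty} t^{n-1}\zeta(t)\d t - (n-k)\,s^k\int_s^{\infty} r^{n-k-1}\zeta(r)\d r
\end{equation*}
for every $s>0$. The main point is then to show that the boundary term $(n-k)s^k\int_s^{\infty} r^{n-k-1}\zeta(r)\d r$ vanishes as $s\to 0^+$. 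I would deduce this directly from the membership $\cR^{n-k}\zeta\in\Had{0}{k}$, which forces $\lim_{s\to 0^+} s^k(\cR^{n-k}\zeta)(s) = 0$; combined with $\lim_{s\to 0^+} s^n\zeta(s)=0$ (from $\zeta\in\Had{0}{n}$), the two contributions coming from the definition of $\cR^{n-k}\zeta$ isolate the boundary term and show it vanishes.

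The remaining case $k=0$ is handled by direct inspection: with the conventions $\kappa_0:=1$, $\Had{0}{0}:=\Had{1}{1}$ and $\oZZ{0}{\xi}^0(\proj_E u):=\xi(0)$, the right-hand side collapses to $\kappa_n (\cR^n\zeta)(0)$, and Lemma~\ref{Rtol} together with $\lim_{s\to 0^+} s^n\zeta(s)=0$ gives $(\cR^n\zeta)(0) = n\lim_{s\to 0^+}\int_s^{\infty} t^{n-1}\zeta(t)\d t$, matching \eqref{eq:v_0_zeta}. I do not expect a serious obstacle here: the argument is essentially a bookkeeping exercise, the only non-trivial point being the vanishing of the boundary term, which is precisely encoded in the Hadwiger class $\Had{0}{k}$ via Lemma~\ref{le:r_kln}.
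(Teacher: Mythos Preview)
Your proof is correct and follows essentially the same route as the paper: both reduce to constants via \eqref{eq:v_0_zeta} and then verify the resulting scalar identity. The paper's argument is slightly more streamlined, observing directly that both sides equal $\kappa_n\,\cR^n\zeta(0)$ by the semigroup relation $\cR^k\circ\cR^{n-k}=\cR^n$; your Fubini computation and boundary-term analysis are precisely an unpacking of this relation, so you could shorten the argument by invoking it instead.
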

\begin{proof}
Observe that by \eqref{eq:v_0_zeta}, Lemma~\ref{le:r_kln} and the definition of $\Had{0}{n}$, we have
$$\oZZ{0}{\zeta}^n(u)=n\, \kappa_n \lim_{s\to 0^+} \int_s^{\infty} t^{n-1}\zeta(t) \d t  = \kappa_n \cR^{n}\zeta(0)$$
for every $u\in\fconvs$ and similarly
$$\oZZ{0}{\cR^{n-k}\zeta}^k(\proj_E u) = \kappa_k \cR^k \cR^{n-k}\zeta(0) = \kappa_k \cR^n \zeta(0)$$
for every $u\in\fconvs$ and $E\in\Grass{k}{n}$. Combined with our conventions for the case $k=0$, the statement is now immediate.
\end{proof}

\subsection{Proof of Theorem~\ref{thm:cauchy_kubota_for_hessian_vals}}
The case $j=0$ follows from Lemma~\ref{le:r_kln} and Lemma~\ref{le:ck_zero}. Therefore, assume that $j>0$. For $u\in\fconvs\cap C_+^2(\R^n)$, it follows from Theorem~\ref{thm:existence_singular_hessian_vals} and Proposition~\ref{prop:ck_regular} that
\begin{align*}
\oZZ{j}{\zeta}^n(u)&=\int_{\R^n} \zeta(|\nabla u(x)|)\big[\Hess u(x)\big]_{n-j}\d x\\
&= 
\frac{\kappa_n}{\kappa_k \kappa_{n-k} }\binom{n}{k}
\int_{\Grass{k}{n}}\int_E \xi(|\nabla \proj_E u(x_E)|) \big[\Hess \proj_E u(x_E)\big]_{k-j} \d x_E \d E\\
&=\frac{\kappa_n}{\kappa_k \kappa_{n-k} }\binom{n}{k}
\int_{\Grass{k}{n}} \oZZ{j}{\xi}^k(\proj_E u) \d E,
\end{align*}
where $\xi\in\Had{j}{k}$ is as in \eqref{eq:def_xi}. The statement now follows from Theorem~\ref{thm:existence_singular_hessian_vals}, Lemma~\ref{le:val_int_grass} and Lemma~\ref{le:C2p_dense}.

\section{The Hadwiger Theorem on Finite-Valued Convex Functions}
\label{se:dual}
The authors \cite{Colesanti-Ludwig-Mussnig-5} established the Hadwiger theorem also for valuations on $\fconvf$ by using duality with valuations on $\fconvs$. For $0\le j\le n$ and $\zeta\in\Had{j}{n}$, define $\oZZ{j}{\zeta}^{n,*}$ as the valuation dual to $\oZZ{j}{\zeta}^{n}$, that is, $\oZZ{j}{\zeta}^{n,*}(v):=\oZZ{j}{\zeta}^{n}(v^*)$ for $v\in\fconvf$. 

\begin{theorem}[\!\cite{Colesanti-Ludwig-Mussnig-5}, Theorem 1.4]\label{dual main one way} 
For $\,0\leq j \leq n$ and $\zeta\in\Had{j}{n}$, 
the functional
$\oZZ{j}{\zeta}^{n,*}\colon\fconvf\to\R$ is a continuous, dually epi-translation and 	rotation invariant valuation  such that
\begin{equation*}
	\oZZ{j}{\zeta}^{n,*}(v)=  \int_{\R^n} \zeta(|x|) \big[\Hess v(x)\big]_{j} \d x
\end{equation*}
for every $v\in\fconvf\cap C^2_+(\R^n)$. 
\end{theorem}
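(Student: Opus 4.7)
The plan is to define $\oZZ{j}{\zeta}^{n,*}(v):=\oZZ{j}{\zeta}^{n}(v^*)$ and transfer every claim from $\fconvs$ to $\fconvf$ through the Legendre--Fenchel transform, using Theorem~\ref{thm:existence_singular_hessian_vals} as the only substantive input. The integral representation on $\fconvf\cap C_+^2(\R^n)$ will come from the integral representation on $\fconvs\cap C_+^2(\R^n)$ combined with the duality of Hessian measures recorded in \eqref{eq:int_u_psi_int_v_phi}.

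First I would dispose of the structural properties. Continuity and the valuation property follow from the corresponding properties of $\oZZ{j}{\zeta}^{n}$ on $\fconvs$ together with the general duality principle from \cite{Colesanti-Ludwig-Mussnig-3} recalled in Section~\ref{se:vals_on_convex_fcts}. Dual epi-translation invariance is precisely what it means, by definition, for the dual of an epi-translation invariant functional, so nothing is left to verify. For rotation invariance, the identity $(v\circ\vartheta^{-1})^*=v^*\circ\vartheta^{-1}$ (a one-line substitution using $\vartheta^t=\vartheta^{-1}$) gives
\begin{equation*}
\oZZ{j}{\zeta}^{n,*}(v\circ\vartheta^{-1})=\oZZ{j}{\zeta}^{n}(v^*\circ\vartheta^{-1})=\oZZ{j}{\zeta}^{n}(v^*)=\oZZ{j}{\zeta}^{n,*}(v)
\end{equation*}
by the $\O(n)$ invariance of $\oZZ{j}{\zeta}^{n}$ noted after Theorem~\ref{thm:existence_singular_hessian_vals}.

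For the integral formula, suppose $v\in\fconvf\cap C_+^2(\R^n)$, so that $v^*\in\fconvs\cap C_+^2(\R^n)$ by the remark in Section~\ref{convex functions}. Theorem~\ref{thm:existence_singular_hessian_vals} then yields
\begin{equation*}
\oZZ{j}{\zeta}^{n,*}(v)=\int_{\R^n}\zeta(|\nabla v^*(y)|)\,[\Hess v^*(y)]_{n-j}\d y=\int_{\R^n}\zeta(|z|)\d\Psi_j^n(v^*,z),
\end{equation*}
where the second equality is the defining property of $\Psi_j^n(v^*,\cdot)$ with $\beta(z)=\zeta(|z|)$. Applying \eqref{eq:int_u_psi_int_v_phi} with $u=v^*$ (so $u^*=v^{**}=v$) and then the defining property of $\Phi_j^n(v,\cdot)$ gives
\begin{equation*}
\int_{\R^n}\zeta(|z|)\d\Psi_j^n(v^*,z)=\int_{\R^n}\zeta(|z|)\d\Phi_j^n(v,z)=\int_{\R^n}\zeta(|z|)\,[\Hess v(z)]_{j}\d z,
\end{equation*}
which is the desired representation.

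The one delicate point I anticipate is the admissibility of $\beta(z)=\zeta(|z|)$ in \eqref{eq:int_u_psi_int_v_phi}: for $j<n$ the function $\zeta$ need not extend continuously to $s=0$, so one should work on $\R^n\setminus\{0\}$ and use the growth condition $\lim_{s\to 0^+}s^{n-j}\zeta(s)=0$ built into $\Had{j}{n}$, together with the compact support of $\zeta$, to secure absolute convergence of both integrals. This is exactly the setting in which \eqref{eq:int_u_psi_int_v_phi} is stated, so the argument closes without further work.
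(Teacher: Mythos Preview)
The paper does not give its own proof of this statement; it is quoted from \cite{Colesanti-Ludwig-Mussnig-5}, with only the definition $\oZZ{j}{\zeta}^{n,*}(v):=\oZZ{j}{\zeta}^{n}(v^*)$ and the remark that the dual results are obtained ``by using duality with valuations on $\fconvs$''. Your duality strategy is therefore exactly the intended one, and the structural claims (continuity, valuation property, dual epi-translation and rotation invariance) are handled correctly.

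There is, however, a gap in your derivation of the integral formula. You assert that $v\in\fconvf\cap C_+^2(\R^n)$ forces $v^*\in\fconvs\cap C_+^2(\R^n)$, citing the remark in Section~\ref{convex functions}; but that remark is the equivalence $u\in\fconvs\cap C_+^2(\R^n)\Leftrightarrow u^*\in\fconvs\cap C_+^2(\R^n)$, which only gives your conclusion when $v$ is itself super-coercive. This can fail: for instance $v(x)=\sqrt{1+|x|^2}$ lies in $\fconvf\cap C_+^2(\R^n)$, yet $v^*$ equals $+\infty$ outside the closed unit ball, so $v^*\notin C_+^2(\R^n)$ and the representation \eqref{eq:rep_ozz_c2} for $\oZZ{j}{\zeta}^{n}(v^*)$ is unavailable (and nothing in the paper supplies $\oZZ{j}{\zeta}^{n}(u)=\int\zeta(|y|)\d\Psi_j^n(u,y)$ for general $u\in\fconvs$ when $\zeta$ is singular). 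The repair is short: set $v_\varepsilon:=v+\varepsilon\,|\cdot|^2\in\fconvs\cap C_+^2(\R^n)$, apply your argument verbatim to $v_\varepsilon$, and let $\varepsilon\to 0$, using continuity of $\oZZ{j}{\zeta}^{n,*}$ on the left and, on the right, that $[\Hess v_\varepsilon]_j=[\Hess v+2\varepsilon I]_j\to[\Hess v]_j$ uniformly on the compact support of $x\mapsto\zeta(|x|)$ together with $\zeta(|\cdot|)\in L^1(\R^n)$ (which the growth condition in $\Had{j}{n}$ guarantees for $j\ge 1$; the case $j=0$ is trivial).
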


\goodbreak
The Hadwiger theorem on $\fconvf$ is the following result. Let $n\ge2$.

\begin{theorem}[\!\cite{Colesanti-Ludwig-Mussnig-5}, Theorem 1.5]
\label{dthm:hadwiger_convex_functions}
A functional $\oZ:\fconvf \to \R$ is a continuous, dually epi-translation and rotation invariant valuation if and only if there exist  functions $\zeta_0\in\Had{0}{n}$, \dots, $\zeta_n\in\Had{n}{n}$  such that
\begin{equation*}
\oZ(v)= \sum_{j=0}^n \,\oZZ{j}{\zeta_j}^{n,*}(v) 
\end{equation*}
for every $v\in\fconvf$.
\end{theorem}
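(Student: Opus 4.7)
The plan is to derive this as a direct consequence of the Hadwiger theorem on $\fconvs$, Theorem \ref{thm:hadwiger_convex_functions}, by transferring everything through the Legendre--Fenchel transform. The backbone is the correspondence $\oZ \leftrightarrow \oZ^*$ between valuations on $\fconvf$ and on $\fconvs$ discussed in Section \ref{se:vals_on_convex_fcts}: for $\oZ:\fconvf\to\R$ we set $\oZ^*(u):=\oZ(u^*)$ for $u\in\fconvs$, and this is a bijection between functionals on the two spaces because $v\mapsto v^*$ is an involution mapping $\fconvf$ onto $\fconvs$.

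The sufficiency direction is immediate: by Theorem \ref{dual main one way}, each $\oZZ{j}{\zeta_j}^{n,*}$ is a continuous, dually epi-translation and rotation invariant valuation on $\fconvf$, so any real linear combination is as well. For the necessity direction, I would take a continuous, dually epi-translation and rotation invariant valuation $\oZ:\fconvf\to\R$ and check that $\oZ^*$ satisfies the hypotheses of Theorem \ref{thm:hadwiger_convex_functions}. Continuity of $\oZ^*$ and the valuation property are exactly the content of the result from \cite{Colesanti-Ludwig-Mussnig-3} cited in Section \ref{se:vals_on_convex_fcts}, using Lemma \ref{wijsman} together with the fact that convex conjugation intertwines pointwise $\vee,\wedge$ with the operations on $\fconvs$ appearing in the valuation equation (whenever all four functions involved lie in the relevant domains). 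Epi-translation invariance of $\oZ^*$ is the \emph{definition} of dually epi-translation invariance for $\oZ$. Finally, rotation invariance transfers because $(u\circ \vartheta^{-1})^* = u^*\circ \vartheta^{-1}$ for every $u\in\fconvs$ and $\vartheta\in\SO(n)$ (a short computation using $\vartheta^t=\vartheta^{-1}$), hence
\[
\oZ^*(u\circ\vartheta^{-1})=\oZ((u\circ\vartheta^{-1})^*)=\oZ(u^*\circ\vartheta^{-1})=\oZ(u^*)=\oZ^*(u).
\]

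Having verified the hypotheses, I apply Theorem \ref{thm:hadwiger_convex_functions} to obtain functions $\zeta_0\in\Had{0}{n},\dots,\zeta_n\in\Had{n}{n}$ with
\[
\oZ^*(u)=\sum_{j=0}^{n}\oZZ{j}{\zeta_j}^n(u)\qquad\text{for every }u\in\fconvs.
\]
Setting $u:=v^*$ for $v\in\fconvf$ and using $u^*=v$ together with the very definition of the dual valuations yields
\[
\oZ(v)=\oZ^*(v^*)=\sum_{j=0}^{n}\oZZ{j}{\zeta_j}^n(v^*)=\sum_{j=0}^{n}\oZZ{j}{\zeta_j}^{n,*}(v),
\]
which is the desired representation.

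The conceptually delicate step, and the one that genuinely does the work, is not done in this proof at all: it is the passage from $\oZ$ to $\oZ^*$ preserving continuity and the valuation property, which rests on the non-trivial results of \cite{Colesanti-Ludwig-Mussnig-3}; once those are invoked, the argument reduces to bookkeeping. If one had to reprove that transfer from scratch, the main obstacle would be checking the valuation identity under conjugation in the presence of the super-coercivity constraint, since one must ensure that whenever $v_1\vee v_2,v_1\wedge v_2\in\fconvf$ the corresponding operations $v_1^*\wedge v_2^*, v_1^*\vee v_2^*$ land in $\fconvs$, so that Theorem \ref{thm:hadwiger_convex_functions} may legitimately be applied.
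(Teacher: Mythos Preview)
Your proposal is correct and follows exactly the approach the paper indicates: the theorem is not proved in this paper but is imported from \cite{Colesanti-Ludwig-Mussnig-5}, and the paper explicitly states that it ``established the Hadwiger theorem also for valuations on $\fconvf$ by using duality with valuations on $\fconvs$,'' which is precisely your argument via $\oZ\leftrightarrow\oZ^*$ and Theorem~\ref{thm:hadwiger_convex_functions}. The details you supply (transfer of continuity and the valuation property from \cite{Colesanti-Ludwig-Mussnig-3}, epi-translation invariance by definition, rotation invariance via $(u\circ\vartheta^{-1})^*=u^*\circ\vartheta^{-1}$) are the standard ones and are consistent with what Section~\ref{se:vals_on_convex_fcts} sets up.
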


For $v\in\fconvf$ and a linear subspace $E$ of $\R^n$,  let $v|_E:E\to\R$ denote the restriction of $v$ to $E$. We require the following result. 

\begin{lemma}[\!\cite{RockafellarWets}, Theorem 11.23]
\label{le:conj_proj}
If $E$ is a linear subspace of $\,\R^n$ and $u\in\fconvs$, then
$$(\proj_E u)^*(x_E)=(u^*)|_{E}(x_E)$$
for $x_E\in E$, where on the left side the convex conjugate is taken with respect to the ambient space $E$.
\end{lemma}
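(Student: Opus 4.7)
The plan is to prove this by a direct computation from the definitions, exploiting the orthogonal decomposition $\R^n = E \oplus E^\perp$. Given $x_E \in E$, I would start with the definition of the convex conjugate taken inside $E$:
\begin{equation*}
(\proj_E u)^*(x_E) = \sup_{y_E \in E}\bigl(\langle x_E, y_E\rangle - \proj_E u(y_E)\bigr)
= \sup_{y_E \in E}\Bigl(\langle x_E, y_E\rangle - \min_{z \in E^\perp} u(y_E + z)\Bigr).
\end{equation*}
Since the minimum in the definition of $\proj_E u$ is attained (because $u$ is lower semicontinuous and super-coercive), pulling the minus sign inside converts it to a supremum, yielding
\begin{equation*}
(\proj_E u)^*(x_E) = \sup_{y_E \in E}\,\sup_{z \in E^\perp}\bigl(\langle x_E, y_E\rangle - u(y_E + z)\bigr).
\end{equation*}

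Next I would use the orthogonality $\langle x_E, z\rangle = 0$ for all $z \in E^\perp$, which gives $\langle x_E, y_E\rangle = \langle x_E, y_E + z\rangle$. Reparameterizing the double supremum via $y := y_E + z \in \R^n$ (every $y \in \R^n$ decomposes uniquely as such), we obtain
\begin{equation*}
(\proj_E u)^*(x_E) = \sup_{y \in \R^n}\bigl(\langle x_E, y\rangle - u(y)\bigr) = u^*(x_E) = (u^*)|_E(x_E),
\end{equation*}
which is exactly the claimed identity.

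There is no real obstacle here — the proof is essentially a two-line manipulation, and the key observation is simply that a linear functional in $E$ does not see the $E^\perp$-component, so the inner $\min$ over $E^\perp$ in the projection and the outer $\sup$ over $E$ combine into a single $\sup$ over all of $\R^n$. The only point requiring a brief justification is the attainment of the infimum in $\proj_E u$, which allows the clean exchange of $-\min$ with $\sup$; this in turn relies on the hypothesis $u \in \fconvs$ (super-coercivity plus lower semicontinuity), as noted right after the definition of the projection function in Section~\ref{se:proj_fcts}.
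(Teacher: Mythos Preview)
Your argument is correct. The paper does not supply its own proof of this lemma; it simply quotes it as Theorem~11.23 of Rockafellar--Wets, so there is nothing to compare against beyond noting that your direct computation is exactly the standard one.

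One small remark: the attainment of the minimum in the definition of $\proj_E u$ is not actually needed for the step where you convert $-\min$ to $\sup$. The identity $-\inf_{z} f(z) = \sup_{z}\bigl(-f(z)\bigr)$ holds for any function, attained or not, so the same chain of equalities goes through with $\inf$ in place of $\min$ and the hypothesis $u\in\fconvs$ is only used to ensure that $\proj_E u$ is well-defined (finite somewhere, lower semicontinuous, super-coercive on $E$). This does not affect the validity of your proof, only the justification you offered for that one step.
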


\noindent
The following result is obtained from Theorem \ref{thm2:hadwiger_convex_functions_ck} by using Lemma \ref{le:conj_proj}, (\ref{n-gradient}) and (\ref{eq:int_u_psi_int_v_phi}). It is our second version of the Hadwiger theorem on $\fconvf$. Let $n\geq 2$.

\begin{theorem}\label{hugo_ck_f}
A functional $\,\oZ:\fconvf\to\R$ is a continuous, dually epi-translation and rotation invariant valuation if and only if there exist functions $\alpha_0,\ldots,\alpha_n\in C_c([0,\infty))$ such that
$$\oZ(v)=\sum_{j=0}^n \int_{\Grass{j}{n}} \int_{E} \alpha_j(|x|) \d \Phi_{j}^j(v|_E,x) \d E$$
for every $v\in\fconvf$.
\end{theorem}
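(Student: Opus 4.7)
The plan is to derive Theorem~\ref{hugo_ck_f} from Theorem~\ref{thm2:hadwiger_convex_functions_ck} by passing through the Legendre--Fenchel transform. As recalled in Section~\ref{se:vals_on_convex_fcts}, the correspondence $\oZ \mapsto \oZ^*$ (with $\oZ^*(u):=\oZ(u^*)$) is a bijection between continuous valuations on $\fconvf$ and continuous valuations on $\fconvs$, and by definition $\oZ$ is dually epi-translation invariant (resp.\ rotation invariant) on $\fconvf$ if and only if $\oZ^*$ is epi-translation invariant (resp.\ rotation invariant) on $\fconvs$. Hence the classification of $\oZ$ on $\fconvf$ reduces to applying Theorem~\ref{thm2:hadwiger_convex_functions_ck} to $\oZ^*$ and rewriting the resulting integrals via convex conjugation.

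For the \emph{only if} direction, suppose $\oZ:\fconvf\to\R$ has the stated properties. Then $\oZ^*$ fulfills the hypotheses of Theorem~\ref{thm2:hadwiger_convex_functions_ck}, so there exist $\alpha_0,\dots,\alpha_n\in C_c([0,\infty))$ with
\begin{equation*}
\oZ^*(u) = \sum_{j=0}^n \int_{\Grass{j}{n}} \int_{\dom(\proj_E u)} \alpha_j(|\nabla \proj_E u(x_E)|) \d x_E \d E
\end{equation*}
for every $u\in\fconvs$. Fix $v\in\fconvf$ and set $u:=v^*\in\fconvs$, so $\oZ(v)=\oZ^*(u)$. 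Fix $1\le j\le n-1$ and $E\in\Grass{j}{n}$, and identify $E$ with $\R^j$ so that $w:=\proj_E u\in\fconvsE$ is a super-coercive convex function on a $j$-dimensional space. Applying \eqref{n-gradient} in dimension $j$ to $w$ with the bounded continuous function $\beta(y):=\alpha_j(|y|)$ gives
\begin{equation*}
\int_{\dom w} \alpha_j(|\nabla w(x_E)|) \d x_E = \int_E \alpha_j(|y|) \d\Psi_j^j(w,y),
\end{equation*}
and applying \eqref{eq:int_u_psi_int_v_phi} in dimension $j$ (with $B=E$) transfers this to
\begin{equation*}
\int_E \alpha_j(|y|) \d\Psi_j^j(w,y) = \int_E \alpha_j(|x|) \d\Phi_j^j(w^*,x).
\end{equation*}
Finally, by Lemma~\ref{le:conj_proj}, $w^*=(\proj_E v^*)^*=(v^{**})|_E = v|_E$, since $v\in\fconvf$ is continuous and convex. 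Combining these identities and summing over $j$ yields the claimed formula. The boundary cases $j=0$ and $j=n$ are handled by the conventions already in place: for $j=n$ the Grassmannian is a single point, $v|_E=v$, and the identity is the standard relation between the Monge--Amp\`ere measure $\Phi_n^n(v,\cdot)$ and the push-forward $\Psi_n^n(v^*,\cdot)$ via the gradient; for $j=0$ both sides reduce to the constant $\alpha_0(0)$.

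The \emph{if} direction is obtained by reversing the chain of equalities. For functions $\alpha_0,\dots,\alpha_n\in C_c([0,\infty))$ and $v\in\fconvf$, the same application of Lemma~\ref{le:conj_proj}, \eqref{n-gradient} and \eqref{eq:int_u_psi_int_v_phi} (in each $E\in\Grass{j}{n}$ identified with $\R^j$) shows that
\begin{equation*}
\sum_{j=0}^n \int_{\Grass{j}{n}} \int_E \alpha_j(|x|) \d\Phi_j^j(v|_E,x) \d E = \sum_{j=0}^n \int_{\Grass{j}{n}} \int_{\dom(\proj_E v^*)} \alpha_j(|\nabla\proj_E v^*(x_E)|) \d x_E \d E,
\end{equation*}
and the right-hand side, as a function of $v$, is the dual $\oZ^*(v^*)$ of the valuation furnished by Theorem~\ref{thm2:hadwiger_convex_functions_ck} applied with the same $\alpha_j$; thus $\oZ$ is of the required type. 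The only step requiring a little care is the justified identification of the ambient space: one must check that \eqref{n-gradient} and \eqref{eq:int_u_psi_int_v_phi} are truly intrinsic to the subspace $E$, so that no normalization factor enters when applied with dimension $j$ instead of $n$. This follows because both identities are formulated for a general Euclidean space, and the image measures $\Psi_j^j$ and $\Phi_j^j$ on $E$ depend only on $w$ and $w^*$ seen as convex functions on $E$.
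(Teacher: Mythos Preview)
Your proof is correct and follows exactly the route indicated in the paper: derive the result from Theorem~\ref{thm2:hadwiger_convex_functions_ck} by dualizing via the Legendre--Fenchel transform, using Lemma~\ref{le:conj_proj} to identify $(\proj_E u)^*$ with $v|_E$, and then \eqref{n-gradient} together with \eqref{eq:int_u_psi_int_v_phi} (applied intrinsically in each $j$-dimensional subspace $E$) to rewrite the gradient integrals as integrals against $\Phi_j^j(v|_E,\cdot)$. The paper records this only as a one-line remark, so your argument is a faithful and more detailed elaboration of the same proof.
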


\noindent
Here in the summand $j=0$ we define $\d\Phi_0^0(v|_E,\cdot)$ to be the Dirac point measure at $0$ and note that this summand is just a constant functional on $\fconvf$.

\goodbreak
The following integral-geometric formulas  are obtained from Theorem \ref{thm:cauchy_kubota_for_hessian_vals} by using Lemma \ref{le:conj_proj}.

\begin{theorem}
\label{thm:cauchy_kubota_for_hessian_vals_dual}
For $\,0\leq j \leq k < n$ and $\zeta\in\Had{k}{n}$,
$$\oZZ{j}{\zeta}^{n,*}(v)=\frac{\kappa_n}{\kappa_k \kappa_{n-k} }\binom{n}{k}
\int_{\Grass{k}{n}} \oZZ{j}{\xi}^{k,*}(v|_E)\d E$$
for every $v\in\fconvf$, where $\xi\in\Had{j}{k}$ is given by
$$\xi(s):= \frac{\kappa_{n-k}}{\binom{n-j}{k-j}} 
\big(
s^{n-k}\zeta(s) +(n-k) \int_s^{\infty}  t^{n-k-1}\zeta(t)\d t\big)$$
for $s>0$.
\end{theorem}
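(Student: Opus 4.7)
The plan is to derive the theorem directly from its primal counterpart, Theorem~\ref{thm:cauchy_kubota_for_hessian_vals}, via convex conjugation. Given $v\in\fconvf$, set $u:=v^{*}\in\fconvs$, so that $u^{*}=v$. By the definition of the dual valuation, $\oZZ{j}{\zeta}^{n,*}(v)=\oZZ{j}{\zeta}^{n}(u)$, and applying Theorem~\ref{thm:cauchy_kubota_for_hessian_vals} to $u$ yields
\[
\oZZ{j}{\zeta}^{n,*}(v) = \frac{\kappa_n}{\kappa_k\kappa_{n-k}}\binom{n}{k}\int_{\Grass{k}{n}}\oZZ{j}{\xi}^{k}(\proj_E u)\,\d E,
\]
with $\xi\in\Had{j}{k}$ given by \eqref{eq:def_xi}, which coincides with the $\xi$ in the present statement.

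The key step will be to rewrite $\oZZ{j}{\xi}^{k}(\proj_E u)$ as $\oZZ{j}{\xi}^{k,*}(v|_E)$. For each $E\in\Grass{k}{n}$, Lemma~\ref{le:conj_proj} gives
\[
(\proj_E u)^{*}=(u^{*})|_{E}=v|_{E},
\]
where the conjugation on the left is taken within the ambient space $E$. Since $\proj_E u\in\fconvsE$ is lower semicontinuous and convex, applying conjugation once more yields $\proj_E u=(v|_E)^{*}$, so by the definition of the dual valuation applied inside the $k$-dimensional subspace $E$,
\[
\oZZ{j}{\xi}^{k}(\proj_E u) = \oZZ{j}{\xi}^{k}\big((v|_E)^{*}\big) = \oZZ{j}{\xi}^{k,*}(v|_E).
\]
Substituting this identity back into the Cauchy--Kubota formula for $u$ produces the claim.

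The only point requiring a brief comment is the consistency of the identification: $\oZZ{j}{\xi}^{k,*}$ is originally defined on finite-valued convex functions on $\R^{k}$ and is transferred to functions on an arbitrary $k$-dimensional subspace $E\subseteq\R^{n}$ via the scheme of Section~\ref{se:proj_fcts}. This is well posed because the $\O(k)$ invariance of $\oZZ{j}{\xi}^{k}$ passes to its dual---convex conjugation commutes with orthogonal transformations---so the definition is independent of the rotation chosen to identify $E$ with $\R^{k}$. The boundary case $k=0$ is handled by the same conventions already in force for Theorem~\ref{thm:cauchy_kubota_for_hessian_vals}. There is no substantive technical obstacle; the proof amounts to duality combined with a single application of Lemma~\ref{le:conj_proj}.
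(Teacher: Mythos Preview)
Your proposal is correct and follows exactly the route the paper indicates: the theorem is stated as being ``obtained from Theorem~\ref{thm:cauchy_kubota_for_hessian_vals} by using Lemma~\ref{le:conj_proj},'' and that is precisely what you do---dualize via $u=v^*$, apply the primal Cauchy--Kubota formula, and use Lemma~\ref{le:conj_proj} to rewrite $\proj_E u$ as $(v|_E)^*$. Your remark on the $\O(k)$ invariance transferring to the dual is a helpful clarification that the paper leaves implicit.
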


\noindent
For results of a similar nature we refer to \cite[Theorem 2.1]{ColesantiHug2000a}, where Crofton formulas for Hessian measures were established.
The following special case of the previous theorem corresponds to Theorem \ref{cauchy_function}. Combined with properties of the integral transform mapping $\zeta$ to $\alpha$ (see Lemma \ref{le:r_kln}), it shows that Theorem \ref{hugo_ck_f} is equivalent to Theorem \ref{dthm:hadwiger_convex_functions}. 

\begin{theorem}
For $\,0\leq j < n$ and $\zeta\in \Had{j}{n}$,
$$\oZZ{j}{\zeta}^{n,*}(v)=
\frac{\kappa_n}{\kappa_j\kappa_{n-j}} \binom{n}{j}
\int_{\Grass{j}{n}} \int_{E} \alpha(|x|) \d \Phi_{j}^j(v|_E,x) \d E$$
for every $v\in\fconvf$, where $\alpha\in C_c([0,\infty))$ is given by 
$$\alpha(s):= \kappa_{n-j}
\big(  s^{n-j}\zeta(s)+(n-j)\int_s^{\infty}  t^{n-j-1} \zeta(t)\d t\big)$$
for $s>0$.
\end{theorem}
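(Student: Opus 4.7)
The plan is to reduce the statement to the primal Cauchy--Kubota formula of Theorem~\ref{cauchy_function} via the defining relation $\oZZ{j}{\zeta}^{n,*}(v) = \oZZ{j}{\zeta}^n(v^*)$, and then to convert the integrand on each $j$-dimensional subspace $E\in\Grass{j}{n}$ into the Hessian-measure form appearing on the right-hand side of the theorem, using the conjugation identities recorded in Section~\ref{convex functions} together with Lemma~\ref{le:conj_proj}.

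Concretely, first assume $1\le j<n$ and set $u:=v^*\in\fconvs$, so $v^{**}=v$ (since $v$ is already lower semicontinuous and convex) and $\oZZ{j}{\zeta}^{n,*}(v)=\oZZ{j}{\zeta}^n(u)$. Apply Theorem~\ref{cauchy_function} to $u$ with the function $\alpha$ as defined in the statement; this rewrites $\oZZ{j}{\zeta}^n(u)$ as
\[
\frac{\kappa_n}{\kappa_j\kappa_{n-j}} \binom{n}{j} \int_{\Grass{j}{n}}  \int_{\dom (\proj_E u)} \alpha(|\nabla \proj_E u(x_E)|) \d x_E \d E.
\]
Fix $E\in\Grass{j}{n}$ and work in the ambient $j$-dimensional subspace $E$, so that $\proj_E u\in\fconvsE$ plays the role of an element of $\fconvsk$ with $k=j$. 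Equation~\eqref{n-gradient}, applied within $E$ to $\proj_E u$ and to the test function $y\mapsto\alpha(|y|)$, converts the inner integral into $\int_E \alpha(|y|)\d\Psi_j^j(\proj_E u, y)$. Next, the conjugation identity~\eqref{eq:int_u_psi_int_v_phi}, again used within $E$, transforms this into $\int_E \alpha(|x|)\d\Phi_j^j((\proj_E u)^*, x)$, where the conjugate is taken relative to the ambient space $E$. Finally, Lemma~\ref{le:conj_proj} gives $(\proj_E u)^* = (u^*)|_E = v^{**}|_E = v|_E$. Substituting back and integrating over $\Grass{j}{n}$ yields the claimed identity.

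The case $j=0$ is handled by direct inspection of the conventions: $\Grass{0}{n}$ consists of a single point, $\Phi_0^0(v|_{\{0\}},\cdot)$ is the Dirac measure at the origin, the prefactor reduces to $\kappa_n/(\kappa_0\kappa_n)\binom{n}{0}=1$, and both sides evaluate to $\alpha(0)$ in view of \eqref{eq:v_0_zeta} and the definition of $\alpha$. The only technical point requiring care is that \eqref{n-gradient} and \eqref{eq:int_u_psi_int_v_phi} must be invoked in the ambient dimension $j$ rather than $n$, and correspondingly that the convex conjugate in Lemma~\ref{le:conj_proj} is the one intrinsic to $E$; given this, no substantive obstacle arises, since all building blocks are already available from Section~\ref{se:ck_direct} and the Preliminaries.
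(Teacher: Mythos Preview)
Your proposal is correct and follows essentially the same route as the paper. The paper presents this theorem as the special case $j=k$ of Theorem~\ref{thm:cauchy_kubota_for_hessian_vals_dual}, which in turn is obtained from the primal Theorem~\ref{thm:cauchy_kubota_for_hessian_vals} via Lemma~\ref{le:conj_proj}; you start instead from the primal special case Theorem~\ref{cauchy_function} and dualize directly using Lemma~\ref{le:conj_proj} together with \eqref{n-gradient} and \eqref{eq:int_u_psi_int_v_phi}, which is exactly the mechanism the paper invokes for Theorem~\ref{hugo_ck_f} and amounts to the same argument with the order of ``specialize to $j=k$'' and ``dualize'' interchanged.
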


\section{Additional Results and Applications}
\label{se:applications}
In this section, we present a second proof of Theorem~\ref{thm:cauchy_kubota_for_hessian_vals}, which uses Theorem~\ref{thm:hadwiger_convex_functions}, and  establish connections between functional intrinsic volumes and their classical counterparts. We also answer questions about non-negative and monotone valuations.

\goodbreak
We require  the following result, which follows from \cite[Lemma 2.15 and Lemma 3.24]{Colesanti-Ludwig-Mussnig-5}. 

\begin{lemma}[\!\cite{Colesanti-Ludwig-Mussnig-5}]
\label{le:calc_ind_bn_tx_theta_i}
	If $\,1\leq j \leq n$ and  $\zeta\in\Had{j}{n}$, then  
	$$\oZZ{j}{\zeta}^n(u_t)=\kappa_n \binom{n}{j} \cR^{n-j} \zeta(t)$$
	for $t\geq 0$, where $u_t(x):= t \vert x\vert + \ind_{\Bn}(x)$ for $x\in\R^n$.
\end{lemma}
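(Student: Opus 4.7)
The plan is to evaluate $\oZZ{j}{\zeta}^n(u_t)$ by unpacking it through the integral representations already available in the paper, splitting on whether $j=n$ or $1\leq j<n$.

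For $j=n$, the function $u_t$ is differentiable on the interior of $\Bn=\dom u_t$, and there $\nabla u_t(x)=t\,x/|x|$ has norm $t$ (with a measure-zero exceptional set at the origin and at the boundary). Since (\ref{n-hom}) applies to every $u\in\fconvs$ and $\zeta\in\Had{n}{n}$, I would simply compute
\[
\oZZ{n}{\zeta}^n(u_t)=\int_{\dom u_t}\zeta(|\nabla u_t(x)|)\d x=\int_{\Bn}\zeta(t)\d x=\kappa_n\,\zeta(t)=\kappa_n\binom{n}{n}\cR^{0}\zeta(t),
\]
which is the claim in this case. For $t=0$ this uses the fact that $\zeta(0)$ is defined for $\zeta\in\Had{n}{n}$.

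For $1\leq j<n$, I would first compute the projection function $\proj_E u_t$ for an arbitrary $E\in\Grass{j}{n}$. For $x_E\in E$ and $z\in E^\perp$ with $x_E+z\in\Bn$, Pythagoras gives $|x_E+z|^2=|x_E|^2+|z|^2$, so $t|x_E+z|$ is minimized at $z=0$, while the indicator contribution forces $|x_E|\leq 1$. Hence
\[
\proj_E u_t(x_E)=t|x_E|+\ind_{\Bn\cap E}(x_E),
\]
that is, $\proj_E u_t$ is exactly the $j$-dimensional analogue of $u_t$ inside $E$. Consequently $|\nabla\proj_E u_t(x_E)|=t$ for almost every $x_E\in\Bn\cap E$.

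Now I would apply Theorem~\ref{cauchy_function}. With $\alpha\in C_c([0,\infty))$ defined by $\alpha(s)=\kappa_{n-j}\,\cR^{n-j}\zeta(s)$ (using Lemma~\ref{le:r_kln} to see $\alpha$ extends continuously to $s=0$), the inner integral is
\[
\int_{\dom(\proj_E u_t)}\alpha(|\nabla\proj_E u_t(x_E)|)\d x_E=\int_{\Bn\cap E}\alpha(t)\d x_E=\kappa_j\,\alpha(t),
\]
independent of $E$. Plugging in and using $\alpha(t)=\kappa_{n-j}\cR^{n-j}\zeta(t)$ together with the fact that the Haar measure on $\Grass{j}{n}$ has total mass $1$, I would get
\[
\oZZ{j}{\zeta}^n(u_t)=\frac{\kappa_n}{\kappa_j\kappa_{n-j}}\binom{n}{j}\,\kappa_j\,\alpha(t)=\kappa_n\binom{n}{j}\cR^{n-j}\zeta(t),
\]
which is the desired formula. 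The case $t=0$ is covered by the same argument, noting that $\cR^{n-j}\zeta\in\Had{j}{j}$ extends continuously to $s=0$.

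There is no real obstacle, only one observation to verify carefully: the explicit identification $\proj_E u_t=t|\cdot|+\ind_{\Bn\cap E}$, which depends on the Pythagorean splitting and on the fact that the unconstrained minimizer of $|x_E+z|$ over $z\in E^\perp$ lies automatically inside the ball $\Bn$ whenever $|x_E|\leq 1$. Everything else is a substitution into Theorem~\ref{cauchy_function} and \eqref{n-hom}.
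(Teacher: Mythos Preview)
Your argument is correct. The computation of $\proj_E u_t$ is right, and substituting into Theorem~\ref{cauchy_function} (for $1\le j<n$) and \eqref{n-hom} (for $j=n$) yields the stated formula.

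However, your route differs from the paper's in a way that matters for the logical architecture. The paper does not prove Lemma~\ref{le:calc_ind_bn_tx_theta_i} internally; it imports it from \cite[Lemma~2.15 and Lemma~3.24]{Colesanti-Ludwig-Mussnig-5}, so that the lemma is available \emph{independently} of Theorem~\ref{thm:cauchy_kubota_for_hessian_vals}. Your proof, by contrast, goes through Theorem~\ref{cauchy_function}, which is itself a corollary of Theorem~\ref{thm:cauchy_kubota_for_hessian_vals} (the first, direct proof in Section~\ref{se:ck_direct}). This is harmless for the lemma as a standalone statement, but it undermines the purpose of Section~\ref{se:ck_hadwiger}: there Lemma~\ref{le:calc_ind_bn_tx_theta_i} is used to give a \emph{second}, Hadwiger-based proof of Theorem~\ref{thm:cauchy_kubota_for_hessian_vals}, advertised as an alternative to the direct argument. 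With your derivation, that second proof would secretly rely on the first. So your approach buys self-containment within this paper at the cost of the independence the authors need; the paper's citation to \cite{Colesanti-Ludwig-Mussnig-5} buys the reverse.
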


\subsection{Second Proof of Theorem~\ref{thm:cauchy_kubota_for_hessian_vals}}
\label{se:ck_hadwiger}

By Lemma \ref{Rtol}, we have
\begin{equation*}
\xi=\frac{\kappa_n \binom{n}{j}}{\kappa_k\binom{k}{j} }\,\cR^{n-k} \zeta
\end{equation*}
and  Lemma \ref{le:r_kln} implies that
$\xi\in\Had{j}{k}$. For $j=0$, the result now follows from Lemma~\ref{le:ck_zero}. 
Thus, let $j>0$. For every $E\in \Grass{k}{n}$,  it easily follows from \eqref{eq:proj_fct_epi_grph} that $\proj_E (t\sq u) =t \sq \proj_E u$ for every $t>0$ and $u\in\fconvs$. Hence,  using Lemma~\ref{le:val_int_grass}, we obtain that the right side of \eqref{eq:cauchy_kubota_via_hadwiger} defines a continuous, epi-translation and rotation invariant valuation that is epi-homogeneous of degree $j$. Thus, by Theorem~\ref{thm:hadwiger_convex_functions}, there exists $\tilde{\zeta}\in \Had{j}{n}$ such that
	$$\frac{\kappa_n\binom{n}{j}}{\kappa_k\binom{k}{j}} \int_{\Grass{k}{n}}  \oZZ{j}{\cR^{n-k}\zeta}^k(\proj_E u) \d E  = \oZZ{j}{\tilde\zeta}^n(u)$$
for every $u\in\fconvs$. We need to show that $\tilde \zeta=\zeta$.

Indeed, for $t\geq 0$, consider the function $u_t\in\fconvs$ defined in Lemma~\ref{le:calc_ind_bn_tx_theta_i}  and observe that 
$$\proj_E u_t (x_E)=t|x_E|+\ind_{B_E^k}(x_E)$$
for $x_E\in E$, where $B_E^k$ denotes the Euclidean unit ball in the $k$-dimensional space $E$.
It follows from Lemma~\ref{le:calc_ind_bn_tx_theta_i} that 
	\begin{equation*}
	\cR^{k-j}\cR^{n-k} \zeta =  \cR^{n-j} \tilde{\zeta}
	\end{equation*}
and therefore
$$\cR^{n-j} \zeta =  \cR^{n-j} \tilde{\zeta}.$$
Hence, Lemma~\ref{le:r_kln} implies that $\tilde{\zeta} = \zeta$.

\goodbreak
\subsection{Retrieving Intrinsic Volumes and Cauchy--Kubota formulas}
\label{se:ret_intr_vols}
The space, $\cK^n$, of convex bodies in $\R^n$ can be  embedded into the function space $\fconvs$ by identifying $K\in\cK^n$ with its indicator function $\ind_K\in\fconvs$. Similarly, we can embed $\cK^n$ into $\fconvf$ by identifying $K$ with its support function $h_K\in\fconvf$. As the following results show, the functional intrinsic volumes generalize the classical intrinsic volumes, and it is easy to retrieve the intrinsic volume $V_j$ on $\cK^n$ from both $\oZZ{j}{\zeta}^n$ on $\fconvs$ and $\oZZ{j}{\zeta}^{n,*}$ on $\fconvf$.

\begin{proposition}
\label{prop:retrieve_intrinsic_volumes}
If $\,0\leq j \leq n-1$ and $\zeta\in\Had{j}{n}$, then
$$\oZZ{j}{\zeta}^n(\ind_K) = \kappa_{n-j}  \cR^{n-j}\!\zeta(0)\,V_j(K)$$
or equivalently
$$\oZZ{j}{\zeta}^n(\ind_K) = (n-j) \kappa_{n-j}  \lim_{s\to 0^+} \int_s^{\infty} t^{n-j-1}\zeta(t) \d t\,V_j(K)$$
for every $K\in\cK^n$.
If $\zeta\in\Had{n}{n}$, then 
$$\oZZ{n}{\zeta}^n(\ind_K) = \zeta(0)\, V_n(K)$$ 
for every $K\in\cK^n$.
\end{proposition}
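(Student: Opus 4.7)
The plan is to reduce the claim to the classical Cauchy--Kubota formula \eqref{cauchy_kubota} via Theorem \ref{thm:cauchy_kubota_for_hessian_vals} in the special case $k=j$, using the elementary identity $\proj_E \ind_K = \ind_{\proj_E K}$.

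First, I would dispose of the case $j=n$. For $u=\ind_K\in\fconvs$ one has $\dom \ind_K = K$ and $\nabla \ind_K(x) = 0$ at every interior point of $K$, so \eqref{n-hom} gives $\oZZ{n}{\zeta}^n(\ind_K) = \int_K \zeta(0)\d x = \zeta(0)\,V_n(K)$.

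For $0\le j\le n-1$, I would apply Theorem \ref{thm:cauchy_kubota_for_hessian_vals} with $k=j$. Since $\binom{n-j}{k-j}=1$ in this case, the function $\xi\in\Had{j}{j}$ in \eqref{eq:def_xi} is
\begin{equation*}
\xi(s)=\kappa_{n-j}\bigl(s^{n-j}\zeta(s)+(n-j)\int_s^{\infty}t^{n-j-1}\zeta(t)\d t\bigr)=\kappa_{n-j}\cR^{n-j}\zeta(s),
\end{equation*}
by Lemma \ref{Rtol}. Because the sublevel sets of $\ind_K$ are $K$ for $t\ge 0$ and $\emptyset$ for $t<0$, identity \eqref{eq:proj_fct_lvl_set} yields $\proj_E \ind_K = \ind_{\proj_E K}$ for every $E\in\Grass{j}{n}$. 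Plugging into \eqref{eq:cauchy_kubota_via_hadwiger} and invoking the already established $j=n$ case in dimension $j$,
\begin{equation*}
\oZZ{j}{\xi}^{j}(\proj_E \ind_K) = \xi(0)\,V_j(\proj_E K) = \kappa_{n-j}\,\cR^{n-j}\zeta(0)\,V_j(\proj_E K).
\end{equation*}
Integration over the Grassmannian together with the classical Cauchy--Kubota formula \eqref{cauchy_kubota} then collapses the geometric constant exactly, producing $\oZZ{j}{\zeta}^n(\ind_K)=\kappa_{n-j}\cR^{n-j}\zeta(0)\,V_j(K)$. The boundary case $j=0$ needs to be read with the conventions $\kappa_0:=1$, $\Grass{0}{n}=\{0\}$ and $\oZZ{0}{\xi}^{0}(\proj_E u):=\xi(0)$, and can alternatively be read off from Lemma \ref{le:ck_zero} combined with \eqref{eq:v_0_zeta} and $V_0(K)=1$.

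Finally, the equivalent form follows from evaluating Lemma \ref{Rtol} at $s=0$: the summand $s^{n-j}\zeta(s)$ vanishes in the limit because the definition of $\Had{j}{n}$ demands $\lim_{s\to 0^+}s^{n-j}\zeta(s)=0$, and the remaining summand $(n-j)\lim_{s\to 0^+}\int_s^{\infty}t^{n-j-1}\zeta(t)\d t$ exists and is finite, again by definition of $\Had{j}{n}$.

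I do not anticipate a substantial obstacle; the argument is essentially algebraic once Theorem \ref{thm:cauchy_kubota_for_hessian_vals} is in hand. The only mildly delicate point is bookkeeping the $j=0$ case under the paper's conventions and verifying that the constants $\frac{\kappa_n}{\kappa_j\kappa_{n-j}}\binom{n}{j}$ from the functional Cauchy--Kubota formula and from \eqref{cauchy_kubota} cancel precisely.
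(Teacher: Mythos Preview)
Your proposal is correct and follows essentially the same route as the paper: both arguments use $\proj_E \ind_K = \ind_{\proj_E K}$, evaluate the resulting $j$-dimensional functional as $\xi(0)\,V_j(\proj_E K)$, and then collapse the Grassmannian integral via the classical Cauchy--Kubota formula \eqref{cauchy_kubota}. The only cosmetic difference is that the paper cites Theorem~\ref{cauchy_function} directly while you invoke Theorem~\ref{thm:cauchy_kubota_for_hessian_vals} with $k=j$ together with \eqref{n-hom}, which is precisely how Theorem~\ref{cauchy_function} is derived.
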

\begin{proof}
Let $K\in\cK^n$ be given and $0\leq j\leq n$. It follows from \eqref{eq:proj_fct_lvl_set} that $\proj_E \ind_K = \ind_{\proj_E K}$ and thus,
$$\int_{\dom(\proj_E \ind_K)} \alpha(|\nabla \proj_E \ind_K(x_E)|) \d x_E = \int_{\proj_E K} \alpha(0) \d x_E= \alpha(0)\, V_j(\proj_E K)$$
for every $\alpha\in C_c([0,\infty))$ and $E\in \Grass{j}{n}$, where integration is with respect to the Lebesgue measure on $E$. Hence, combining this, Theorem~\ref{cauchy_function}, Lemma~\ref{Rtol} and \eqref{cauchy_kubota}, we obtain
$$
\oZZ{j}{\zeta}^n(\ind_K) = \frac{\kappa_n}{\kappa_j} \binom{n}{j} \cR^{n-j}\!\zeta(0) \int_{\Grass{j}{n}} V_j(\proj_E K) \d E = \kappa_{n-j}  \cR^{n-j}\!\zeta(0)\, V_j(K),
$$
which concludes the proof.
\end{proof}

\goodbreak
Since $\ind_K^*=h_K$ for $K\in\cK^n$, we immediately obtain the following dual statement.

\begin{proposition}
\label{prop:retrieve_intrinsic_volumes_dual}
If $\,0\leq j \leq n-1$ and $\zeta\in\Had{j}{n}$, then
$$\oZZ{j}{\zeta}^{n,*}(h_K)=\kappa_{n-j}  \cR^{n-j}\!\zeta(0)\, V_j(K)$$
or equivalently
$$\oZZ{j}{\zeta}^{n,*}(h_K)=(n-j)\kappa_{n-j}\lim_{s\to 0^+} \int_s^{\infty} t^{n-j-1} \zeta(t) \d t\,V_j(K) $$
for every $K\in\cK^n$.
If $\zeta\in\Had{n}{n}$, then
$$\oZZ{n}{\zeta}^{n,*}(h_K)=\zeta(0)\, V_n(K)$$
for every $K\in\cK^n$.
\end{proposition}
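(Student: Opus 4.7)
The plan is to reduce everything to Proposition \ref{prop:retrieve_intrinsic_volumes} via duality. The key observation is that for any convex body $K\in\cK^n$, the indicator function $\ind_K$ lies in $\fconvs$ (it is proper, lower semicontinuous, convex, and super-coercive since $K$ is bounded), and its convex conjugate is $\ind_K^*=h_K$. Since $\ind_K$ is proper, lower semicontinuous and convex, we have $\ind_K^{**}=\ind_K$, hence $h_K^*=\ind_K$. By the definition of the dual valuation,
$$\oZZ{j}{\zeta}^{n,*}(h_K)=\oZZ{j}{\zeta}^{n}(h_K^*)=\oZZ{j}{\zeta}^{n}(\ind_K),$$
so the first formula for $0\leq j\leq n-1$ and the case $j=n$ follow immediately from the corresponding statements in Proposition \ref{prop:retrieve_intrinsic_volumes}.

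It then remains to justify the equivalent form of the coefficient in the case $0\leq j\leq n-1$. By Lemma \ref{Rtol},
$$\cR^{n-j}\zeta(s)=s^{n-j}\zeta(s)+(n-j)\int_s^{\infty}t^{n-j-1}\zeta(t)\d t$$
for every $s>0$. Using the defining properties of $\Had{j}{n}$, namely $\lim_{s\to 0^+}s^{n-j}\zeta(s)=0$ and the existence and finiteness of $\lim_{s\to 0^+}\int_s^{\infty}t^{n-j-1}\zeta(t)\d t$, we may pass to the limit as $s\to 0^+$ to conclude
$$\cR^{n-j}\zeta(0)=(n-j)\lim_{s\to 0^+}\int_s^{\infty}t^{n-j-1}\zeta(t)\d t,$$
which gives the equivalence of the two expressions.

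There is no real obstacle here; the proposition is a formal consequence of Proposition \ref{prop:retrieve_intrinsic_volumes} combined with the involutivity of the Legendre--Fenchel transform on $\fconvs$ and $\fconvf$ and the elementary identity for $\cR^{n-j}\zeta(0)$ provided by Lemma \ref{Rtol}. The only minor point to verify carefully is that $\ind_K\in\fconvs$ and that $h_K^*=\ind_K$, both of which are standard.
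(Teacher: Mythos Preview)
Your proof is correct and follows exactly the paper's approach: the paper derives this proposition in a single sentence preceding the statement, namely ``Since $\ind_K^*=h_K$ for $K\in\cK^n$, we immediately obtain the following dual statement,'' which is precisely the duality reduction to Proposition~\ref{prop:retrieve_intrinsic_volumes} that you carry out. Your additional justification of the equivalence of the two coefficient expressions via Lemma~\ref{Rtol} is a welcome clarification that the paper leaves implicit (it is already implicit in the statement of Proposition~\ref{prop:retrieve_intrinsic_volumes}).
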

\noindent
We remark that it is possible to prove Proposition~\ref{prop:retrieve_intrinsic_volumes} and Proposition~\ref{prop:retrieve_intrinsic_volumes_dual}, without using Theorem~\ref{cauchy_function}, by direct calculation.

Proposition~\ref{prop:retrieve_intrinsic_volumes} shows that our new Cauchy--Kubota formulas generalize the classical ones. In order to see this, let $0\leq j \leq k <n$ and choose $\alpha\in C_c([0,\infty))$ such that $\alpha(0)\neq 0$. Set $\zeta:=\cR^{-(n-j)}\alpha$ and note that by Lemma~\ref{le:r_kln} we have $\zeta\in\Had{j}{n}$. Choosing $u=\ind_K$ for some convex body $K\in\cK^n$ in Theorem~\ref{thm:cauchy_kubota_for_hessian_vals}, we obtain
\begin{align*}
\kappa_{n-j} \alpha(0) V_j(K) &= \oZZ{j}{\zeta}^n(\ind_K)\\
&= \frac{\kappa_n}{\kappa_k}\frac{\binom{n}{k}}{\binom{n-j}{k-j}} \int_{\Grass{k}{n}} \oZZ{j}{\cR^{n-k} \zeta}^k(\proj_E \ind_K) \d E\\
&=\frac{\kappa_n}{\kappa_k}\frac{\binom{n}{k}}{\binom{n-j}{k-j}} \int_{\Grass{k}{n}} \kappa_{k-j} (\cR^{k-j}\cR^{n-k} \cR^{-(n-j)}\alpha)(0) V_j(\proj_E K) \d E,
\end{align*}
where we  used Proposition~\ref{prop:retrieve_intrinsic_volumes} and the fact that $\proj_E \ind_K=\ind_{\proj_E K}$ for $E\in\Grass{k}{n}$, which follows from \eqref{eq:proj_fct_lvl_set}. Since $\cR^{k-j}\cR^{n-k} \cR^{-(n-j)}\alpha=\alpha$ and $\alpha(0)\neq 0$, we therefore obtain
$$\frac{\kappa_{n-j}}{\kappa_{k-j}}\binom{n-j}{k-j} V_j(K)=\frac{\kappa_n}{\kappa_k} \binom{n}{k} \int_{\Grass{k}{n}} V_j(\proj_E K) \d E.$$
Note that the special case $j=k$ is just \eqref{cauchy_kubota}.

\subsection{Non-negative and Monotone Valuations}
Theorem~\ref{cauchy_function} allows us to easily answer the question under which conditions on $\zeta\in\Had{j}{n}$ the valuation $\oZZ{j}{\zeta}^n$ is non-negative.

Let $1\leq j \leq n-1$ and $\zeta\in\Had{j}{n}$. Recall that $\alpha\in C_c([0,\infty))$ is given by
$$\alpha(s)= \kappa_{n-j} \big(  s^{n-j}\zeta(s)+(n-j)\int_s^{\infty}  t^{n-j-1} \zeta(t)\d t\big) = \kappa_{n-j} \cR^{n-j}\zeta(s)$$
for $s>0$. Since
$$\oZZ{j}{\zeta}^n(u)= \frac{\kappa_n}{\kappa_j\kappa_{n-j}} \binom{n}{j}
\int_{\Grass{j}{n}}  \int_{\dom (\proj_E u)} \alpha(|\nabla \proj_E u(x_E)|) \d x_E \d E$$
for every $u\in\fconvs$, it is easy to see that if $\alpha$ is non-negative, then so is $\oZZ{j}{\zeta}^n$.

\goodbreak
Conversely, assume that $\oZZ{j}{\zeta}^n(u)\geq 0$ for every $u\in\fconvs$. By Lemma~\ref{le:calc_ind_bn_tx_theta_i} we now have
$$0\leq \oZZ{j}{\zeta}^n(u_t)=\frac{\kappa_n}{\kappa_{n-j}} \binom{n}{j} \, \alpha(t)$$
for every $t\geq 0$. Thus, $\alpha$ needs to be non-negative.

\goodbreak
In the cases $j=0$ and $j=n$,  non-negativity is easy to describe. Thus, we have  shown the following result.

\begin{proposition}
For $j=0$, the valuation $\oZZ{j}{\zeta}^n$ is non-negative if and only if $\,\lim_{s\to 0^+} \int_{s}^{\infty} t^{n-1} \zeta(t) \d t \geq 0$. For $j=n$, the valuation $\oZZ{j}{\zeta}^n$ is non-negative if and only if $\zeta$ is non-negative. For $1\leq j \leq n-1$, the valuation $\oZZ{j}{\zeta}^n$ is non-negative if and only if
$$s^{n-j}\zeta(s)+(n-j)\int_s^{\infty}  t^{n-j-1} \zeta(t)\d t \geq 0$$
for every $s>0$.
\end{proposition}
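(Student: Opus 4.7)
\medskip

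The plan is to treat the three cases separately, relying on the integral representations already at hand and on the one-parameter test family $u_t(x)=t|x|+\ind_{\Bn}(x)$ from Lemma~\ref{le:calc_ind_bn_tx_theta_i}. In every case the strategy is the same: sufficiency comes from an explicit non-negative integrand in the known representation of $\oZZ{j}{\zeta}^n$, while necessity is forced by evaluating on $u_t$ and letting $t$ range over $[0,\infty)$.

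For $j=0$, by \eqref{eq:v_0_zeta} the valuation $\oZZ{0}{\zeta}^n$ is the constant $n\kappa_n\lim_{s\to0^+}\int_s^{\infty} t^{n-1}\zeta(t)\d t$, so non-negativity of $\oZZ{0}{\zeta}^n$ on $\fconvs$ is tautologically equivalent to non-negativity of this limit. For $j=n$, sufficiency is immediate from \eqref{n-hom}, since $\zeta\ge 0$ makes the integrand in $\int_{\dom u}\zeta(|\nabla u|)\d x$ non-negative. For necessity, Lemma~\ref{le:calc_ind_bn_tx_theta_i} (with $\cR^0\zeta=\zeta$) gives $\oZZ{n}{\zeta}^n(u_t)=\kappa_n\,\zeta(t)$ for every $t\ge 0$, so $\oZZ{n}{\zeta}^n\ge0$ forces $\zeta(t)\ge0$ for all $t>0$.

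For the main case $1\le j\le n-1$, I would introduce
$$\alpha(s):=\kappa_{n-j}\,\cR^{n-j}\zeta(s)=\kappa_{n-j}\Big(s^{n-j}\zeta(s)+(n-j)\int_s^{\infty} t^{n-j-1}\zeta(t)\d t\Big),$$
which by Lemma~\ref{le:r_kln} extends to a function in $C_c([0,\infty))$. Theorem~\ref{cauchy_function} then yields
$$\oZZ{j}{\zeta}^n(u)=\frac{\kappa_n}{\kappa_j\kappa_{n-j}}\binom{n}{j}\int_{\Grass{j}{n}}\int_{\dom(\proj_E u)}\alpha(|\nabla\proj_E u(x_E)|)\d x_E\d E,$$
so $\alpha\ge0$ on $[0,\infty)$ immediately implies $\oZZ{j}{\zeta}^n\ge0$ on $\fconvs$. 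Conversely, assuming $\oZZ{j}{\zeta}^n\ge0$ and evaluating on $u_t$, Lemma~\ref{le:calc_ind_bn_tx_theta_i} gives
$$0\le\oZZ{j}{\zeta}^n(u_t)=\kappa_n\binom{n}{j}\cR^{n-j}\zeta(t)=\frac{\kappa_n}{\kappa_{n-j}}\binom{n}{j}\alpha(t)$$
for every $t\ge 0$, which after dividing by the positive constant $\kappa_{n-j}$ produces precisely the stated pointwise inequality for $s=t>0$.

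There is essentially no obstacle in this argument; the statement is a clean repackaging of the short discussion preceding it. The only point demanding any care is to remember that the test functional $u_t\in\fconvs$ was custom-built in Lemma~\ref{le:calc_ind_bn_tx_theta_i} to diagonalise $\oZZ{j}{\zeta}^n$, turning the non-negativity of $\oZZ{j}{\zeta}^n$ into a purely pointwise condition on $\cR^{n-j}\zeta$; after that, Lemma~\ref{le:r_kln} identifies this condition with the inequality stated in the proposition.
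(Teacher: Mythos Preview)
Your proposal is correct and follows essentially the same route as the paper: sufficiency via the integral representation of Theorem~\ref{cauchy_function} (respectively \eqref{n-hom} and \eqref{eq:v_0_zeta}), and necessity via evaluation on the test family $u_t$ using Lemma~\ref{le:calc_ind_bn_tx_theta_i}. The only difference is that you spell out the easy cases $j=0$ and $j=n$ explicitly, whereas the paper just remarks that in these cases ``non-negativity is easy to describe.''
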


A valuation $\oZ\colon\fconvs\to\R$ is \emph{increasing}, if $\oZ(u_1)\le\oZ(u_2)$ for all $u_1,u_2\in\fconvs$ such that $u_1\le u_2$. It is \emph{decreasing} if $\oZ(u_1)\ge\oZ(u_2)$ for all $u_1,u_2\in\fconvs$ such that $u_1\le u_2$. It is \emph{monotone} if it is decreasing or increasing. 

\begin{proposition} If $\,\oZ$ is a continuous, epi-translation invariant, and monotone valuation on $\fconvs$, then $\oZ$ is constant.
\end{proposition}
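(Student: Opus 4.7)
The plan is to prove $\oZ(u) = C$ for every $u \in \fconvs$, where $C := \oZ(\ind_{\{0\}})$. Without loss of generality assume $\oZ$ is increasing; otherwise replace $\oZ$ by $-\oZ$. For the upper bound, I would pick any $v_0 \in \dom u$ (possible since $u$ is proper): the pointwise inequality $\ind_{\{v_0\}} + u(v_0) \geq u$ holds trivially, since the left side equals $u(v_0)$ at $v_0$ and $+\infty$ elsewhere. Epi-translation invariance, which includes invariance under spatial translations and under the addition of constants, gives $\oZ(\ind_{\{v_0\}} + u(v_0)) = \oZ(\ind_{\{0\}}) = C$, and monotonicity then yields $\oZ(u) \leq C$.

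The main step is the matching lower bound, which I would first prove for functions with bounded domain. After a preliminary epi-translation, assume $u \in \fconvso$ with $\dom u \subseteq R\, \Bn$ for some $R > 0$; in particular $u \geq 0$. The key construction is the auxiliary family
\begin{equation*}
w_\epsilon(x) := \epsilon |x|^2 - \epsilon R^2 + \ind_{R\, \Bn}(x), \quad \epsilon > 0,
\end{equation*}
which lies in $\fconvs$. Since $w_\epsilon \leq 0 \leq u$ on $R\, \Bn$ and $w_\epsilon = +\infty = u(x)$ for $x \notin R\, \Bn$, we have $w_\epsilon \leq u$, and monotonicity gives $\oZ(w_\epsilon) \leq \oZ(u)$. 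Epi-translation invariance (adding the constant $\epsilon R^2$) rewrites this as $\oZ(\epsilon |x|^2 + \ind_{R\, \Bn}) \leq \oZ(u)$. For $t \geq 0$, the sublevel set of $\epsilon |x|^2 + \ind_{R\, \Bn}$ at level $t$ is the closed ball of radius $\min(\sqrt{t/\epsilon}, R)$, which Hausdorff-converges to $\{0\}$ as $\epsilon \to \infty$; by Lemma~\ref{le:hd_conv_lvl_sets} this forces $\epsilon |x|^2 + \ind_{R\, \Bn}$ to epi-converge to $\ind_{\{0\}}$, and continuity of $\oZ$ yields $\oZ(\epsilon |x|^2 + \ind_{R\, \Bn}) \to C$. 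Hence $\oZ(u) \geq C$, and combined with the upper bound $\oZ(u) = C$.

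For arbitrary $u \in \fconvs$ I would use density and continuity. After a preliminary epi-translation, assume $u \in \fconvso$ and set $u_R := u + \ind_{R\, \Bn}$ for $R > 0$. Each $u_R$ lies in $\fconvso$ with $\dom u_R \subseteq R\, \Bn$, so the previous step gives $\oZ(u_R) = C$. The sublevel sets $\{u \leq t\}$ are bounded by super-coercivity, so $\{u_R \leq t\} = \{u \leq t\}$ for $R$ large enough depending on $t$; Lemma~\ref{le:hd_conv_lvl_sets} then yields $u_R \to u$ in epi-convergence, and continuity of $\oZ$ gives $\oZ(u) = \lim_{R \to \infty} \oZ(u_R) = C$.

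I anticipate no significant obstacle: the argument is elementary and the only mildly creative step is the quadratic-plus-indicator lower bound $w_\epsilon$, designed so that after subtracting its minimum it epi-converges to $\ind_{\{0\}}$. The proof uses only the stated hypotheses and requires no decomposition or classification theory.
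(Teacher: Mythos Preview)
Your proof is correct but follows a different route from the paper. The paper first reduces, via the density of finite-valued functions (Lemma~\ref{le:C2p_dense}), to showing $\oZ(u_1)=\oZ(u_2)$ for any two $u_1,u_2\in\fconvs$ with $\dom u_i=\R^n$; it then restricts both to a ball $B_r$, observes that $u_{1,r}$ and $u_{2,r}$ differ by a bounded additive constant on $B_r$, and concludes $\oZ(u_{1,r})=\oZ(u_{2,r})$ from monotonicity plus invariance under vertical shifts, finally letting $r\to\infty$. You instead fix the reference value $C=\oZ(\ind_{\{0\}})$ and sandwich: the upper bound $\oZ(u)\le C$ is immediate from comparison with a translated point indicator, while for the lower bound you introduce the quadratic family $w_\epsilon$, engineered so that after removing its minimum it epi-converges to $\ind_{\{0\}}$. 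The paper's argument is a bit shorter and needs no auxiliary family; your argument is more explicit, avoids invoking the density lemma, and makes the role of the single reference function $\ind_{\{0\}}$ transparent. Both proofs use only the stated hypotheses and are of comparable elementary difficulty.
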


\begin{proof} Without loss of generality we assume that $\oZ$ is increasing. By Lemma~\ref{le:C2p_dense} and the continuity of $\oZ$, it is sufficient to prove that $\oZ(u_1)=\oZ(u_2)$ for every $u_1,u_2\in\fconvs$ such that $\dom(u_1)=\dom(u_2)=\R^n$.
Fix two such functions $u_1,u_2\in\fconvs$. For $r>0$, let $B_r:=\{x\in\R^n\colon |x|\le r\}$ and set
$$
u_{1,r}=u_1+\ind_{B_r},\quad\quad u_{2,r}=u_2+\ind_{B_r}.
$$
As $u_1$ and $u_2$ are continuous in $B_r$, there exists $\gamma>0$ such that
$$
u_{2,r}(x)-\gamma\le u_{1,r}(x)\le u_{2,r}(x)+\gamma
$$
for every $x\in\R^n$. From the epi-translation invariance and monotonicity of $\oZ$, we deduce
$$
\oZ(u_{1,r})=\oZ(u_{2,r}),
$$
and this equality holds for every $r>0$. On the other hand $u_{1,r}$ and $u_{2,r}$ epi-converge to $u_1$ and $u_2$, respectively, as $r\to\infty$. The continuity of $\oZ$ implies that $\oZ(u_1)=\oZ(u_2)$.
\end{proof}

\noindent We remark that monotone functionals on convex functions that are epi-additive, that is, additive with respect to infimal convolution, were classified in \cite{Rotem22}. Rigid motion invariant and monotone valuations (that are not necessarily epi-translation invariant) were studied in \cite{CavallinaColesanti}.

\subsection*{Acknowledgments}
The authors thank the referee for their careful reading and helpful remarks.
M.~Ludwig was supported, in part, by the Austrian Science Fund (FWF):  P 34446.
F. Mussnig was supported, in part, by the European Research Council (ERC) under the European Union's Horizon 2020 research and innovation programme (grant agreement {No.~770127}) and, in part, by the Austrian Science Fund (FWF): J 4490-N.

\bigskip
\footnotesize

\end{document}